\definecolor{lightgrey}{rgb}{0.9,0.9,0.9}
\newtheorem{theorem}{Theorem}
\newtheorem{corollary}{Corollary}
\newtheorem{lemma}{Lemma}
\newtheorem{proposition}{Proposition}
\newtheorem{definition}{Definition}
\newtheorem{remark}{Remark}
\newtheorem*{note}{Note}
\newcommand{\Y}{{\rm Y}_{d,n}^{\mathtt B}}
\newcommand{\Z}{{\rm Y}_{d,n-1}^{\mathtt B}}
\newcommand{\F}{{\rm FTL}_{d,n}^{\mathtt B}}
\newcommand{\s}{\mathtt s}
\newcommand{\R}{\mathtt r}
\newcommand{\T}{{\rm TL}_n^{\mathtt{B}}}
\newcommand{\U}{\mathsf{u}}
\newcommand{\V}{\mathsf{v}}
\numberwithin{equation}{section}
\let\@wraptoccontribs\wraptoccontribs
\begin{document}

\title{Framization of a Temperley-Lieb algebra of type $\mathtt{B}$}

\author{M. Flores}
\address{Instituto de Matem\'{a}ticas \\ Universidad de Valpara\'{i}so \\ Gran Breta\~{n}a 1091, Valpara\'{i}so, Chile}
\email{marcelo.flores@uv.cl}

\author{D. Goundaroulis}
\address{Center for Integrative Genomics,
University of Lausanne,
1015 Lausanne, Switzerland.}
\address{Swiss Institute of Bioinformatics, 1015, Lausanne, Switzerland.}
\email{dimoklis.gkountaroulis@unil.ch}

\keywords{Framization, Yokonuma-Hecke algebra, Hecke algebra of type $\mathtt{B}$, Temperley-Lieb algebra of type $\mathtt{B}$, Markov trace, link invariants, torus knots and links}

\subjclass[2010]{57M27, 20C08, 20F36}

\begin{abstract}
We extend the Framization of the Temperley-Lieb algebra to Coxeter systems of
type $\mathtt{B}$. We first define a natural extension of the classical
Temperley-Lieb algebra to Coxeter systems of type $\mathtt{B}$ and prove that such an
extension supports a unique linear Markov trace function. We then introduce the Framization of the Temperley-Lieb algebra of type
$\mathtt{B}$ as a quotient of the Yokonuma-Hecke algebra of type $\mathtt{B}$. The main theorem provides necessary and sufficient
conditions for the Markov trace  defined on the Yokonuma-Hecke algebra of type $\mathtt{B}$ to pass to the quotient algebra. Using
the main theorem, we construct invariants for framed links and classical links inside the solid torus.
\end{abstract}

\maketitle

\section{Introduction}

The Temperley-Lieb algebra appeared  originally in the study of the Potts model in statistical mechanics and in the
ice-type model in two dimensions \cite{tl71}. In the 1980's the Temperley-Lieb algebra was rediscovered by Jones in the context of
von Neumann
algebras \cite{jo83} and later as a quotient of the Hecke algebra \cite{jo}. The Hecke algebra supports a unique inductive linear
trace that can be
rescaled according to the Markov equivalence for braids and under certain conditions it passes to the Temperley-Lieb algebra. This
procedure leads to the definition of the Jones  polynomial. For these reasons, the
Hecke algebra and the Temperley-Lieb algebra are often considered as knot algebras. Another notable example of a knot algebra is the
BMW algebra \cite{bw, mu}.

Framization is  a technique introduced by Juyumaya and
Lambropoulou that produces new knot algebras associated to framed knots and links  \cite{jula}. Framization adds new generators,
called {\it the framing generators}, to the generating set of a known knot algebra and defines relations between the original and the
framing generators of the algebra. From an algebraic point of view, a knot algebra might have multiple candidates that are valid.
However, since the motivation of the technique is to obtain new polynomial invariants for (framed) links, candidates that produce
new, non-trivial link invariants are preferred. In particular, when multiple framization candidates for a knot algebra are
considered, the framization of the algebra that is most natural from a topological point of view is chosen \cite{gojukola2}.

A basic example of framization is the Yokonuma-Hecke algebra of type $\mathtt{A}$, denoted ${\rm Y}_{d,n}(u)$. It was introduced in
the context of Chevalley groups in
\cite{yo} and can be regarded as the framization of the Hecke algebra. Juyumaya fine-tuned  the presentation of ${\rm Y}_{d,n}(u)$ by
giving a natural description in terms of the framed braid group \cite{jusur}.  In recent years, framizations of several knot algebras have appeared \cite{jula2,jula,jula4,jula5,gojukola} that led to Jones-type invariants for framed \cite{jula}, classical \cite{jula, chjukala}, and singular links \cite{jula3}.

The Framization of the Temperley-Lieb algebra  ${\rm FTL}_{d,n}(q)$ was introduced in
\cite{gojukola2} as a quotient of ${\rm Y}_{d,n}(u)$. From this, a family of one-variable invariants for classical links in $S^3$,
denoted $\theta_d
(q)$, was derived  by finding the necessary and sufficient conditions for the trace of ${\rm Y}_{d,n}(u)$
to pass to the quotient algebra. For $d=1$, the invariant $\theta_1$ coincides with the Jones polynomial while for $d\neq1$,
$\theta_d$ is {\it not } topologically equivalent to the Jones polynomial {\it on links} \cite{gojukola2}.
More recently, Goundaroulis and Lambropoulou generalized the invariants $\theta_d (q)$ to a new two-variable invariant that is
stronger than the Jones polynomial on links and that can also detect the Thistlethwaite
link \cite{gola}.

All the results that are mentioned above are related to the Coxeter group of type $\mathtt{A}$. However, there is a growing
interest in framizations of algebras that are related to Coxeter systems of type $\mathtt{B}$. Indeed, the affine and
cyclotomic Yokonuma-Hecke algebras were introduced in \cite{chpoIMRN}, while in \cite{fjl} Flores and collaborators introduced $\Y (\U, \V)$,
the Yokonuma-Hecke algebra of type $\mathtt B$.

In this paper we extend the  Framization of the Temperley-Lieb algebra of type $\mathtt{A}$ to Coxeter
groups of type $\mathtt{B}$ by implementing the methods of \cite{fjl}. We first consider the generalized Temperley-Lieb
algebra that is associated to an arbitrary Coxeter system \cite{grlo}, and specialize it to the case of Coxeter systems of type
$\mathtt{B}$. We denote this algebra by ${\rm TL}_n^{\mathtt{B}}(\U, \V)$ and show that it emerges naturally as a quotient of the
Hecke algebra of type $\mathtt{B}$, denoted ${\rm H}_n(\U, \V)$. We then compute the necessary and sufficient conditions for the
Markov trace of ${\rm H}_n(\U, \V)$
\cite{la} to pass to the quotient algebra. The Framization of the Temperley-Lieb algebra of type $\mathtt{B}$, which is denoted $\F
(\U, \V)$, is  defined  as a quotient of the algebra $\Y (\U ,\V)$. For $d=1$, the algebra $\F (\U, \V)$ coincides with ${\rm
TL}_n^{\mathtt{B}}(\U, \V)$. The
main theorem determines the necessary and sufficient conditions such that the trace of $\Y (\U, \V)$ \cite{fjl} passes to $\F (\U,
\V)$. Finally, we investigate the conditions of the main theorem which generate topologically non-trivial invariants for framed
and classical links and we define those invariants.

The outline of the paper is as follows. In Section~\ref{prelim} we introduce the notation and we present the
classical braid group, the framed braid group, the algebra ${\rm H}_n(\U, \V)$, its framization
$\Y(\U, \V)$, and the Framization of the Temperley-Lieb algebra of type $\mathtt{A}$. In Section~\ref{sectlb} we introduce the
Temperley-Lieb algebra associated to the Coxeter group of type ${\mathtt B}$, denoted ${\rm TL}_n^{\mathtt{B}} (\U, \V)$. We also
determine the necessary and sufficient conditions  such that the trace on ${\rm H}_n ( \U , \V)$ passes to the algebra ${\rm TL}_n^{\mathtt{B}} (\U, \V)$ and
construct the corresponding link invariants. In Section~\ref{secftlb} we present the algebra $\F (\U ,\V)$ as a quotient of the
algebra $\Y (\U, \V)$ modulo an appropriate two-sided ideal and determine
the necessary and sufficient conditions so that the Markov trace defined on the algebra $\Y (\U, \V)$ passes  to the
quotient algebra. In Section~\ref{secinv} we use the trace on $\F (\U, \V)$ to define invariants
for framed and classical links and provide a set of skein relations for both cases. Finally, we show that the invariants for
classical links from $\F (\U, \V)$ are stronger than the Jones polynomial in the solid torus since they distinguish more pairs of
affine links.

\section{Preliminaries}\label{prelim}
Let $\U$, $\V$ be indeterminates. With the term algebra we mean an associative algebra with unity over $\mathbb{K}:=\mathbb{C}(\U,
\V)$.
\subsection{{\it Groups of type} ${\mathtt B}_n$}
For $n\geq 2$, we define the Coxeter group of type ${\mathtt  B}_n$, denoted by $W_n$, as the finite Coxeter group associated to the
following Dynkin diagram:
 \begin{center}
\setlength\unitlength{0.2ex}
\begin{picture}(350,40)
\put(82,20){$\R_1$}
\put(120,20){$\s_{1}$}
\put(200,20){$\s_{n-2}$}
\put(240,20){$\s_{n-1}$}

\put(85,10){\circle{5}}
\put(87.5,11){\line(1,0){35}}
\put(87.5,9){\line(1,0){35}}
\put(125,10){\circle{5}}
\put(127.5,10){\line(1,0){10}}

\put(145,10){\circle*{2}}
\put(165,10){\circle*{2}}
\put(185,10){\circle*{2}}

\put(205,10){\circle{5}}
\put(207.5,10){\line(1,0){35}}
\put(245,10){\circle{5}}
\put(192.5,10){\line(1,0){10}}


\end{picture}
\end{center}
Let $\R_k=\s_{k-1}\ldots \s_1 \R_1 \s_1\ldots \s_{k-1}$ for $2\leq k\leq n$. Every element
$w\in W_n$ can be written uniquely in a reduced expression as follows \cite{gela}: $w=w_1\ldots w_n$ with $w_k\in \mathtt{N}_k$,
$1\leq k\leq
n$, where
\begin{equation}\label{NWn}
\mathtt{N}_k:=\left\{
1, \R_{k},
\s_{k-1}\cdots \s_{i},
\s_{k-1}\cdots \s_{i}\R_{i}\, ;\, 1\leq i \leq k-1
\right\}.
\end{equation}
The \textit{braid group of type} ${\mathtt  B}_n$ associated to $W_n$, is defined as the group $\widetilde{W}_n$
generated  by $\rho_1 , \sigma_1 ,\ldots ,\sigma_{n-1}$ subject to the following relations
 \begin{equation}\label{braidB}
\begin{array}{rcll}
 \sigma_i \sigma_j & =  & \sigma_j \sigma_i & \text{ for} \quad \vert i-j\vert >1 ,\\
  \sigma_i \sigma_j \sigma_i & = & \sigma_j \sigma_i \sigma_j & \text{ for} \quad \vert i-j\vert = 1 ,\\
   \rho_1\sigma_i&=&\sigma_i\rho_1 &\text{ for}\quad i>1 ,\\
\rho_1 \sigma_1 \rho_1\sigma_1 & = & \sigma_1 \rho_1 \sigma_1\rho_1. &
  \end{array}
\end{equation}

Geometrically, braids of type ${\mathtt  B}_{n}$ can be viewed as classical braids of type ${\mathtt  A}_{n+1}$ with $n+1$ strands,
where the first strand is identically fixed and is called `the fixed strand'. The 2nd, \ldots, $(n+1)$st strands are renamed
from 1 to $n$ and they are called `the moving strands'. The `loop' generator $\rho_1$ corresponds to the looping of the first moving
strand around the fixed strand in the right-handed sense (see Fig.~\ref{genalg}).\smallbreak
The {\it $d$-modular framed braid group of type} $\mathtt{B}_n$ is defined as follows:
 \[
\mathcal{F}^{\mathtt{B}}_{d, n}: =\left (  C_d \right)^n \rtimes \widetilde{W}_n,
 \]
where $C_d:=\langle t\ |\ t^d=1\rangle$, is the cyclic group of order $d$, and the action of $\widetilde{W}_n$ on  $  C_d$ is given
by: $ t_j \sigma_i = \sigma_i t_{\s_i(j)}$ and $t_i \rho_1 = \rho_1 t_i$, for  $1 \leq i \leq n$. In both cases $t_i$ is the element
of $ (C_d)^n$ that has $t$ in the $i^{th}$ position and $1$ everywhere else. For $1\leq i,j \leq n$ and $m\in \{0,\ldots,d-1\}$, we
define the following elements on $\mathbb{C} \mathcal{F}^{\mathtt{B}}_{d,n}$:
  \begin{equation}\label{eifi}
 e_{i,j}^{(m)} = \frac{1}{d} \sum_{s=0}^{d-1} t_i^{m+s}t_j^{-s} \quad \mbox{and} \quad f_i= \frac{1}{d} \sum_{k=0}^{d-1}
 t_i^{k}.
 \end{equation}
For  $j=i+1$, we denote  $e_i^{(m)}:=e_{i,i+1}^{(m)}$ and $e_{i,j} : = e_{i,j}^{(0)}$. Note that $e_{i,j}^{(m)}$ and $f_i$ are
idempotent elements.

\subsection{\it{The Hecke algebra of type ${\mathtt B}$}\label{HeckeB}}
The Hecke algebra of type $\mathtt{B}$, denoted by ${\rm H}_n(\U, \V)$, can be considered as the quotient of
$\mathbb{K}[{\widetilde W}_n]$ modulo the two-sided ideal that is generated by the following elements:
\[
\sigma_i^2 - (\U-\U^{-1})\sigma_i -1  \quad \mbox{and} \quad \rho_1^2 - (\V - \V^{-1})\rho_1 -1.
\]
In terms of generators and relations, ${\rm H}_n(\U, \V)$ is the algebra that is generated by the elements $b_1,
g_1, \ldots, g_{n-1}$ which are subject  to the following relations:
\[
\begin{array}{cccl}
g_ig_j & = &  g_j g_i & \text{for all}\quad \vert i - j\vert >1,\\
g_ig_{i+1}  g_i  & = &  g_{i+1} g_i  g_{i+1}&\text{for all}\quad i =1, \ldots , n-2 ,\\
g_1  b_1 g_1 b_1 & = & b_1 g_1 b_1 g_1 , & \\
g_i^2 & = & 1 + (\U-\U^{-1})g_i &  \text{for all}\quad i ,\\
b_1^2 & = & 1 + (\V-\V^{-1})b_1.&
\end{array}
\]
The dimension of ${\rm H}_n(\U, \V)$ is $2^nn!$ and  for $\U = \V = 1$ it  coincides with $\mathbb{K}[W_n]$.
Consider now the following subsets of ${\rm H}_n(\U, \V)$:
\[
\mathtt{M}_1 = \{ 1, \, b_1 \}, \quad \mathtt{M}_2  =\{ 1, \,b_2 ,\, g_1, \,g_1 b_1 \}, \ \ldots ,  \
\mathtt{M}_n  =  \{ 1,\, b_n , \,\, g_{n-1}x \ | \ x  \in \mathtt{M}_{n-1} \}.
\]
where $b_k:=g_{k-1}\dots g_1 b_1 g_1^{-1}\dots g_{k-1}^{-1}$, for all $2\leq k\leq n$. The following set is a linear basis for
the algebra ${\rm H}_n(\U, \V)$:
\begin{equation}\label{Hnbas}
{\mathtt C}_n = \left \{ m_1 m_2 \ldots m_n  \ \vert \ m_i \in \mathtt{M}_i \right\}.
\end{equation}
There exists a natural epimorphism $\widetilde{W}_n \rightarrow {\rm H}_n(\U, \V)$ sending $\sigma_i
 \mapsto g_i$ and $\rho_1 \mapsto  b_1$. Additionally, the Hecke algebra of type ${\mathtt B}$  supports a unique Markov trace
 function \cite{gela}. Indeed, for any indeterminate $z,y$ there exists a linear trace:
\[
{\rm \tau} : \cup_{n=1}^{\infty} {\rm H}_n(\U,\V) \rightarrow \mathbb{K}[z,y]
\]
that is defined inductively by the following four rules:
\[
\begin{array}{lllll}
(1)& {\rm \tau} (\mathbf{1}_{n+1})&= 1, &\mbox{for all } n&\\
(2)& {\rm \tau} (a b) &= {\rm \tau} (ba), & a,b \in {\rm H}_n(q) &(\mbox{Conjugation property})\\
(3)&{\rm \tau} (a g_n ) &= z \, {\rm \tau}(a), & a\in {\rm H}_n(q) &(\mbox{Markov property for braiding generators})\\
(4)&{\rm \tau} (a b_{n+1} ) &= y \, {\rm \tau}(a), & a\in {\rm H}_n(q) &(\mbox{Markov property for looping generator}),
\end{array}
\]

\begin{remark}\label{rem1} \rm
A different presentation is  often used for the algebra ${\rm H}_n(\U,\V) $ that involves parameters $q$ and
$Q$, as well as different quadratic relations. More precisely, the quadratic relations are the following:
\[
(\overline{g}_i)^2 = (q-1)\overline{g}_i +q \quad \mbox{and} \quad (\overline{b}_1)^2 = (Q-1)\overline{b}_1 + Q.
\]
One can switch between the two presentations by taking $ \overline{g}_i =\U g_i$, $
\overline{b}_1= \V b_1$, $q = \U^2$ and $Q= \V^2$.
\end{remark}
By introducing the term $ \lambda = \frac{z - ( \U - \U^{-1})}{z}$,
one can re-scale $\tau$ so that it satisfies the braid equivalence in the solid torus \cite[Theorem~3]{la}. By
normalizing $\tau$, link invariants in the solid torus can be defined. Indeed, we have \cite[Definition~1]{la}:
\begin{equation}\label{homflyb}
P^{\mathtt{B}}(\U, \V, z, y)(\widehat\alpha) = \left ( \frac{1 - \lambda}{\sqrt \lambda ( \U - \U^{-1})} \right)^{n-1} \left (\sqrt
\lambda \right )^{\varepsilon(\alpha)} \tau\left( \pi ( \alpha ) \right),
\end{equation}
where $\widehat\alpha$ is the closure of the braid $\alpha$ inside the solid torus, $\pi$ is the natural epimorphism
$\widetilde{W}_n \rightarrow {\rm H}_n(\U, \V)$, and $\varepsilon(\alpha)$ is the algebraic sum of the exponents of
the braiding generators in $\alpha$. Furthermore, the invariant $P^{\mathtt{B}}$ can be defined completely by the following two
skein relations:
\begin{align}
\frac{1}{\sqrt \lambda} \ P^{\mathtt{B}} (L_+) - \sqrt \lambda \ P^{\mathtt{B}} (L_{-})  &= \left ( \U -\U^{-1} \right)
P^{\mathtt{B}} (L_0) \label{skein1P} \\
P^{\mathtt{B}} (M_+)  - P^{\mathtt{B}} (M_-)& = \left ( \V - \V^{-1} \right ) P^{\mathtt{B}} (M_0)\label{skein2P} ,
\end{align}
where $L_+$, $L_{-}$, $L_0$, $M_+$ , $M_{-}$ and $M_0$ are as shown in Fig.~\ref{Mis}.
\begin{figure}[h]
  \centering
  \includegraphics{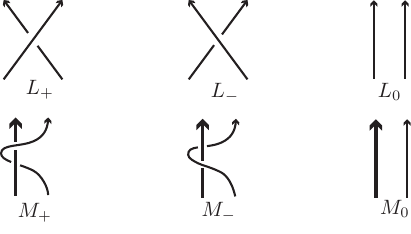}
  \caption{The elements $L_+$, $L_{-}$, $L_0$ constitute a Conway triple. The elements $M_+$ , $M_{-}$ and $M_0$ involve the fixed
  strand (shown in bold).}\label{Mis}
\end{figure}

\subsection{\it{The framization of the Hecke algebra of type ${\mathtt B}$.} }\label{framedB}  The framization of the Hecke
algebra of type $\mathtt{B}$ \cite{fjl}, denoted by ${\rm Y}_{d, n}^{\mathtt{B}} := {\rm Y}_{d, n}^{\mathtt{B}}(\U, \V)$, is defined
as the
algebra over $\mathbb{K}$ generated by the framing generators $t_1,\dots,t_n$, the braiding generators
$g_1,\dots,g_{n-1}$ and the loop generator $b_1$, subject to the following relations:
\begin{eqnarray}g_ig_j & = & g_jg_i  \quad \text{ for} \quad \vert i-j\vert > 1,\label{braid1}\\
  g_i g_j g_i & = & g_j g_i g_j \quad \text{ for} \quad \vert i-j\vert = 1, \label{braid2}\\
  b_1 g_i & = & g_i b_1 \quad \text{for all}\quad  i\not= 1, \label{braid3}\\
b_1 g_1 b_1 g_1 & = &  g_1 b_1 g_1 b_1, \label{braid4}\\
t_i t_j & =  & t_j t_i  \quad \text{for all }  \  i, j,\label{modular2}\\
  t_j g_i & =  & g_i t_{s_i(j)}\quad \text{for all }\,  i, j,  \label{th}\\
t_i b_1 & = & b_1 t_i \quad \text{for all i},\quad   \label{tb1}\\
t_i^d & = & 1 \quad \text{for all}\quad i, \label{modular1}\\
g_i^2 & = &  1+ (\U-\U^{-1})e_ig_i\quad \text{for all $i$}, \label{quadraticU}\\
b_1^2 & = &  1 + (\V-\V^{-1})f_1b_1.\label{quadraticV}
\end{eqnarray}
where $e_i$ and $f_1$ are as in \eqref{eifi}.  In Figure~\ref{genalg} we
illustrate the generators of the algebra $\Y$.

\begin{figure}[h]
  \centering
  \includegraphics{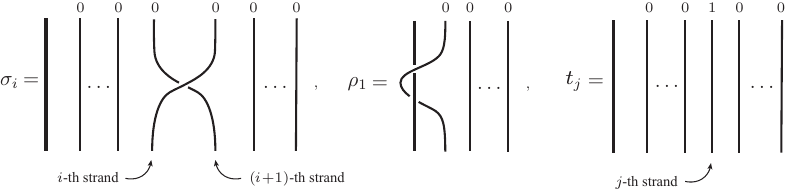}
  \caption{The generators of $\Y (\U, \V)$.}\label{genalg}
\end{figure}

\begin{note}\rm
For $d=1$, the algebra ${\rm Y}_{1,n}^{\mathtt B}$ coincides with ${\rm H}_n(\U,\V)$. By mapping $g_i\mapsto g_i$ and
  $t_i\mapsto 1$, we obtain an epimorphism from $\Y$ to ${\rm H}_n(\U,\V)$. Moreover, if we map the $t_i$'s  to a fixed non-trivial
  $d$-th root of the unity, we have an epimorphism from $\Y$ to ${\rm H}_n(\U,1)$.
\end{note}

In \cite{fjl} two different linear bases for $\Y$ are given, denoted by $\mathsf{D}_n$ and $\mathsf{C}_n$ respectively. We only
recall the second one, since it is the one that is used in the definition of  the Markov trace of $\Y$. For all
$1\leq k\leq n$, we define inductively  the sets $M_{d,k}$ by:
$$
M_{d,1} = \{t_1^m, t_1^m b_1 \,;\, 0 \leq m \leq d-1\}
$$
and
$$
M_{d,k}=\{t_k^m, t_k^mb_{k}, g_{k-1} x\,;\, x \in M_{d,k-1},\, 0\leq m\leq d-1\}
\quad \quad \mbox{for all $2\leq k\leq n$.}
$$
where the elements $b_k$'s are as in Section~\ref{HeckeB}. Define now $\mathsf{C}_n$ as the subset of $\Y$ formed by the elements
  $\mathfrak{m}_1\mathfrak{m}_2\cdots \mathfrak{m}_n$, with $\mathfrak{m}_i\in M_{d,i}$. Moreover, every element of $M_{d,k}$ has the
  form $\mathfrak{m}_{k,j,m}^+$ or
$\mathfrak{m}_{k,j,m}^-$ with $j\leq k$ and $0\leq m\leq d-1$,  where
$$
\mathfrak{m}_{k,k,m}^+:=t_k^m, \qquad \mathfrak{m}_{k,j,m}^+ := g_{k-1}\cdots g_jt_j^m\quad \text{for}\ j<k,
$$
and
$$
\mathfrak{m}_{k,k,m}^-:=t_k^mb_k, \qquad \mathfrak{m}_{k,j,m}^- := g_{k-1}\cdots g_jb_{j}t_j^m\quad \text{for}\ j<k.
$$
From the above, one can deduce that the basis $\mathsf{C}_n$ for $\Y$  may be rewritten as follows \cite[Proposition 5]{fjl}):
\begin{equation}\label{}
  \mathsf{C}_n=\{ t_1^{a_1}t_2^{a_2}\dots t_n^{a_n}m \ |\  m \in \mathtt{C}_n,\ a_i\in \{0,\ldots, d-1\}\}.
\end{equation}

In \cite{fjl} Flores et al. proved that $\Y$ supports a unique Markov trace. In brief,
they construct a certain family of linear maps ${\rm tr}_n: \Y \longrightarrow {\rm
Y}_{d,n-1}^{\mathtt{B}}$, called {\it relative traces}, that build step by step the desired Markov properties  (see also
\cite{chpoIMRN}). Finally, the  Markov trace on $\Y$ is defined by:  $${\rm Tr}_n := {\rm tr}_1\circ \cdots \circ {\rm tr}_n.$$
\begin{theorem}[cf. Theorem~3 \cite{fjl}]\label{trace}
Let $z,x_1,\dots, x_{d-1}, y_0, \dots ,y_{d-1}$ be indeterminates in  $\mathbb{K}(z, x_1,\dots, x_{d-1},$ $y_0, \dots ,y_{d-1})$ and
let
$x_0:=1$. Then the linear map  ${\rm Tr}$ is a Markov trace on $\{\Y \}_{n\geq 1}$. That is, for every $n\geq 1$,  the linear map
${\rm Tr}_n: \Y \longrightarrow \mathbb{K}(z, x_1,\dots, x_{d-1}, y_0, \dots ,y_{d-1})$ satisfies  the following rules:
   \begin{enumerate}
    \item[(i)] ${\rm Tr}_n(1)=1$,
    \item[(ii)] ${\rm Tr}_{n+1}(Xg_n)=z{\rm Tr}_n(X)$,
    \item[(iii)] ${\rm Tr}_{n+1}(Xb_{n+1}t_{n+1}^{m})=y_{m}{\rm Tr}_n(X)$,
    \item[(iv)] ${\rm Tr}_{n+1}(Xt_{n+1}^{m})=x_{m}{\rm Tr}_n(X)$,
    \item[(v)] ${\rm Tr}_n(XY)={\rm Tr}_n(YX),$
 \end{enumerate}
  where $X,Y\in \Y$.
\end{theorem}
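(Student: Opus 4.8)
The plan is to realise ${\rm Tr}_n$ as the composite ${\rm tr}_1\circ\cdots\circ{\rm tr}_n$ of the relative traces ${\rm tr}_n\colon\Y\to\Z$ and to verify the five rules for the composite by induction on $n$. First I would fix each ${\rm tr}_n$ as the $\Z$-bilinear map (that is, ${\rm tr}_n(a\,\xi\,b)=a\,{\rm tr}_n(\xi)\,b$ for all $a,b\in\Z$) determined by the values ${\rm tr}_n(t_n^m)=x_m$, ${\rm tr}_n(g_{n-1})=z$ and ${\rm tr}_n(b_nt_n^m)=y_m$, with $x_0=1$. Inspecting the basis $\mathsf{C}_n$ of \eqref{mon1}, each basis monomial $\mathfrak m_1\cdots\mathfrak m_n$ has its last factor $\mathfrak m_n\in M_{d,n}$ of the form $t_n^m$, $t_n^m b_n$, $g_{n-1}\cdots g_j t_j^m$ or $g_{n-1}\cdots g_j b_j t_j^m$, and in each case it lies in $\Z\cdot\{1,\,t_n^m,\,g_{n-1},\,b_nt_n^m\}\cdot\Z$; hence these values determine ${\rm tr}_n$ on all of $\Y$. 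The first genuine task is to prove that this prescription is \emph{consistent}, i.e.\ independent of how a given element is expressed: this is where one uses the defining relations \eqref{braid1}--\eqref{quadraticV}, most delicately the quadratic relations \eqref{quadraticU} and \eqref{quadraticV}, to check that the two reduced forms of an element agree after applying the braid, framing and looping relations.

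With the maps ${\rm tr}_n$ in place, rules (i)--(iv) are almost formal. Writing ${\rm Tr}_{n+1}={\rm Tr}_n\circ{\rm tr}_{n+1}$ and using the left $\Z$-linearity of ${\rm tr}_{n+1}$, for $X\in\Y$ one gets ${\rm tr}_{n+1}(Xg_n)=zX$, ${\rm tr}_{n+1}(Xt_{n+1}^m)=x_mX$ and ${\rm tr}_{n+1}(Xb_{n+1}t_{n+1}^m)=y_mX$ directly from the prescribed values (the last after rewriting $b_{n+1}t_{n+1}^m$ into the basis order, which only permutes framing indices). Applying ${\rm Tr}_n$ then yields (ii), (iv) and (iii) respectively, while (i) follows by iterating ${\rm tr}_k(1)=1$ down to the scalar map ${\rm tr}_1$.

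The conjugation rule (v) is the crux, and is where I expect the main difficulty. By linearity and an induction on word length it suffices to prove ${\rm Tr}_n(XY)={\rm Tr}_n(YX)$ when $Y$ is a single generator $t_i$, $g_i$ or $b_1$ and $X$ ranges over $\mathsf{C}_n$, and then to induct on $n$. The framing generators are the mildest case: since $t_i$ commutes with all $t_j$ and with $b_1$, since $t_j g_i=g_i t_{s_i(j)}$, and since $t_i^d=1$, the identity reduces to the invariance of the scalars $x_m,y_m$ under the relative traces. The essential difficulty sits in $Y=g_{n-1}$ and $Y=b_1$, where commuting the generator across $X$ forces an application of \eqref{quadraticU} or \eqref{quadraticV} and thereby produces the idempotents $e_i$ and $f_1$ of \eqref{eifi}.

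To handle these I would establish a short list of \emph{absorption lemmas} for the relative traces---formulas evaluating ${\rm tr}_n(a\,g_{n-1}\,b\,g_{n-1})$ and ${\rm tr}_n(a\,b_n\,b)$ for $a,b\in\Z$, together with a compatibility relation between the consecutive maps ${\rm tr}_{n-1}$ and ${\rm tr}_n$---that let one push ${\rm Tr}_n$ through a single conjugation. The real obstacle, and the point at which type $\mathtt B$ departs from the classical Hecke computation, is bookkeeping the framing exponents: because $e_{n-1}$ couples the framing generators $t_{n-1}$ and $t_n$, moving $g_{n-1}$ past $X$ in the presence of $e_{n-1}$ redistributes the exponents of the $t_j$'s, and one must verify that ${\rm Tr}_n$ still assigns the two orders the same scalar. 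The looping generator $b_1$ requires the parallel argument built on $f_1$, the quadratic relation \eqref{quadraticV} and the mixed braid relation \eqref{braid4}, using that $b_1$ commutes with every $g_i$ for $i\neq1$. Once these commutation and absorption identities are checked, (v) follows and the theorem is complete.
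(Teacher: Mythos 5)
Your overall architecture coincides with the paper's: this theorem is not proved in the paper at all --- it is imported from \cite{fjl} (hence the ``cf.\ Theorem~3'' in its statement), and the method described there is exactly the one you outline, namely relative traces ${\rm tr}_k\colon {\rm Y}_{d,k}^{\mathtt B}\to {\rm Y}_{d,k-1}^{\mathtt B}$ composed into ${\rm Tr}_n={\rm tr}_1\circ\cdots\circ{\rm tr}_n$, with rules (i)--(iv) following formally from left linearity of each ${\rm tr}_k$ over ${\rm Y}_{d,k-1}^{\mathtt B}$ and rule (v) carrying the real weight. So the skeleton is right, and your observation that (i)--(iv) are essentially formal is correct.

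The problem is that the two places where you locate the difficulty are precisely the places you leave blank, and they constitute the entire mathematical content of the theorem. First, well-definedness: rather than postulating a ``$\Z$-bilinear'' map and promising a consistency check, the efficient route (the one followed in \cite{fjl}, after \cite{chpoIMRN}) is to use the fact that $\Y$ is a free left $\Z$-module on the set $M_{d,n}$ (Proposition~5 of \cite{fjl}, quoted in the paper); left $\Z$-linear extension from that basis is then automatically well defined, and the genuine lemma is that the resulting map is also \emph{right} $\Z$-linear, which requires computation with relations \eqref{th}, \eqref{quadraticU} and \eqref{quadraticV}. You never carry out this computation, nor its equivalent ``consistency'' check in your formulation. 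Second, and more seriously, rule (v): your ``absorption lemmas'' for ${\rm tr}_n(a\,g_{n-1}\,b\,g_{n-1})$ and ${\rm tr}_n(a\,b_n\,b)$, and the compatibility between consecutive relative traces, are never stated precisely, let alone proved; in \cite{fjl} these occupy a sequence of technical lemmas (bimodule properties, commutation of ${\rm tr}_{n}$ with ${\rm tr}_{n+1}$, evaluation on products of two long basis monomials), and the framing bookkeeping you correctly flag --- the idempotents $e_i$, $f_1$ redistributing exponents of the $t_j$'s --- is exactly where those lemmas are delicate. Identifying the obstacle is not the same as removing it: as written, the proposal is a correct plan of the proof the paper cites, but not a proof.
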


Recall that the method of Jones for obtaining link invariants requires a rescaled
and normalized Markov trace function.  An interesting property of the trace ${\rm Tr}$ is that it does not rescale directly according
to the framed braid equivalence for
the solid torus. Indeed, the trace ${\rm Tr}$ can be rescaled only if the parameters $x_i$, $1 \leq i \leq
d-1$, are solutions of a non-linear system of equations that is called the ${\rm E}$-system \cite[Appendix]{jula}, while the
parameters $y_j$, $0 \leq j \leq d-1$, are solutions of an analogous non-linear system called the ${\rm F}$-system \cite{fjl}.
Consequently, new invariants for framed knots and links in the solid torus can be constructed,
denoted by  ${\mathcal X}_S^{\mathtt{B}}$, that are parametrized by $S\subseteq  C_d$ (for  more details see
\cite[Section 7]{fjl}). The invariants ${\mathcal X}_S^{\mathtt{B}}$ when restricted to framed links with all framings equal to zero,
give rise to invariants of oriented classical links in the solid torus. Since classical knot
theory embeds in the knot theory of the solid torus and by using the results of \cite{chjukala}, we deduce that  the ${\mathcal
X}_S^{\mathtt{B}}$  invariants are different than the invariant $P^{\mathtt{B}}(\U , \V
,x,y)$ on {\it links} \cite{gela, la}.

\subsection{{\it The Framization of the Temperley-Lieb algebra of type $\mathtt{A}$}}\label{TLframedtypeA}
The framization of the Temper\-ley-Lieb algebra of type $\mathtt{A}$ and the derived invariants
for framed and classical links were studied extensively by Goundaroulis and collaborators \cite{go, gojukola, gojukola2}.
As mentioned earlier, it is a well known fact that the Temperley-Lieb algebra of type $\mathtt{A}$ can be obtained as the quotient
of the algebra ${\rm H}_n(u)$ modulo the two-sided ideal that is generated by the following elements:
 \[
 g_{i,i+1} := \sum_{w \in \langle s_i, s_{i+1} \rangle} g_w.
\]
Similarly, the framization of the Temperley-Lieb algebra of type $\mathtt{A}$ is defined as a quotient of the
Yokonuma-Hecke algebra of type $\mathtt{A}$, which is denoted by ${\rm Y}_{d,n}(u)$ \cite{ju}. However, such a
quotient is not unique in the case of framization. As mentioned in the introduction, the quotient algebra that eventually is chosen
is the most natural with respect
to the construction of new, non-trivial invariants for framed and classical knot and links.

The first quotient algebra that was studied is the Yokonuma-Temperley-Lieb algebra \cite{gojukola}, denoted ${\rm YTL}_{d,n}(u)$,
and proved to
be too restrictive. As a consequence, basic pairs of framed links were not distinguished. For this reason this algebra
was discarded as a potential candidate for the framization of the Temperley-Lieb algebra however, the Jones polynomial was
recovered from this construction. The second candidate was the {\it Complex Reflection Temperley-Lieb algebra}, denoted ${\rm
CTL}_{d,n}(u)$ \cite{gojukola2}. In contrast to the case of ${\rm YTL}_{d,n}(u)$, the invariants that are derived from ${\rm
CTL}_{d,n}(u)$ proved to coincide either with those from the algebra ${\rm Y}_{d,n}(u)$ or with those that are derived from the
actual framization of the Temperley-Lieb algebra \cite[Proposition~10]{gojukola2}. This result is consistent with the fact that the
algebra ${\rm CTL}_{d,n}(u)$ is isomorphic to a direct sum of matrix algebras over tensor products of Temperley-Lieb and
Iwahori-Hecke algebras \cite{ChPou2}. Thus, the quotient algebra ${\rm CTL}_{d,n}(u)$ is also discarded as a potential candidate
for the framization of the Temperley-Lieb algebra.

The framization of the Temperley-Lieb algebra is an intermediate algebra between the algebras ${\rm YTL}_{d,n}(u)$ and ${\rm
CTL}_{d,n}(u)$. It is denoted by ${\rm FTL}_{d,n}(u)$,
and it is defined as the quotient of the algebra ${\rm Y}_{d,n}(u)$ modulo the two-sided ideal that is generated by the element:
\[
r_{1,2} :=  e_1 e_2 (1+g_1 + g_{2} + g_{1} g_{2} + g_{2} g_1 + g_1 g_{2}
g_1).
\]

In \cite[Theorem~6]{gojukola2} necessary and sufficient conditions were determined so that  the trace of ${\rm Y}_{d,n}(u)$ passes to
${\rm
 FTL}_{d,n}(u)$. These conditions led to a family of new 1-variable invariants for classical links, $\left\{
 \theta_d \right\}_{d\in \mathbb{N}}$, that are topologically not equivalent to the Jones polynomial on links, while they are
 topologically equivalent to the Jones polynomial on knots \cite[Theorem~9]{gojukola2}.  Finally, the invariants $\theta_d(q)$ can be
 generalized to a 2-variable invariant for classical links,
 $\theta(q,E)$. More precisely, we have the following:
 \begin{theorem}[{\cite[Theorem~1.1]{gola}}]
Let $q, E$ be indeterminates and let  $\mathcal{L}$ be the set of all oriented links. There exists a unique ambient isotopy
invariant of classical oriented links
\[
\theta : \mathcal{L} \rightarrow \mathbb{C}[q^{\pm 1} , E^{\pm 1}]
\]
defined by the following rules:
\begin{enumerate}
\item On crossings involving different components the following skein relation holds:
\[
q^{-2}\, \theta (L_+) - q^2\, \theta (L_-) = (q - q^{-1})\, \theta (L_0),
\]
where $L_+$, $L_-$ and $L_0$ constitute a Conway triple.
\item For a  union $\mathcal{K} = \sqcup_{i=1}^r K_i$ of $r$ unlinked knots, with $r\geq1$, it holds that:
\[
\theta (\mathcal{K}) = E^{1-r} V(\mathcal{K}),
\]
where $V(\mathcal{K})$ is the value of the Jones polynomial on $\mathcal{K}$.
\end{enumerate}

\end{theorem}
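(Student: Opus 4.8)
The plan is to establish uniqueness by a skein-theoretic reduction and existence by exhibiting a concrete invariant that obeys rules (1) and (2); I expect existence (i.e.\ well-definedness) to be the genuine obstacle, while uniqueness is a fairly standard induction. A preliminary remark is that rules (1)--(2) do not directly constrain $\theta$ on a single knot, since the skein relation (1) is vacuous when there are no inter-component crossings. One therefore reads the statement as taking the Jones polynomial as the value on knots; equivalently, the degenerate $r=1$ instance of (2) gives $\theta(K)=V(K)$, and this anchors the whole induction.

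For uniqueness I would argue by induction on the number of components $r$, with an inner induction on the number of inter-component crossings of a chosen diagram $D$. The key point is that relation (1) only ever switches or smooths a crossing between two \emph{distinct} components. Rewriting it as $\theta(L_+)=q^4\,\theta(L_-)+q^2(q-q^{-1})\,\theta(L_0)$, the smoothed term $L_0$ fuses two components into one and hence has $r-1$ components, so it is already determined by the outer induction, while the switched term $L_-$ has the same number of components but brings $D$ closer to a split diagram. Since any diagram can be made into a split union of its components by switching a suitable set of inter-component crossings (one makes the components descending), iterating (1) expresses $\theta(D)$ as a $\mathbb{C}[q^{\pm 1},E^{\pm 1}]$-combination of values on split unions, each of which is evaluated by rule (2). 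This pins down $\theta(D)$ completely, so at most one invariant can satisfy the rules.

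For existence I would not attempt to prove directly that the recursion above is independent of all choices (ordering of crossings, which crossings are switched, choice of diagram); instead I would produce an actual invariant. The natural source is the family $\{\theta_d\}$ already obtained from ${\rm FTL}_{d,n}(u)$ together with its two-variable refinement: these descend from a genuine rescaled Markov trace, so they are honest ambient-isotopy invariants \emph{by construction}, and the remaining task is only to read off their behaviour on the two configurations appearing in the rules. Concretely, on a split union the normalized trace factorizes over the separate components and yields exactly $\prod_i V(K_i)$ up to a single global factor that depends only on $r$; promoting that factor to a free variable $E$ (it occupies the slot held by the $\tfrac1d$-type constants in the $d$-specialization) produces the normalization $E^{1-r}$ of rule (2). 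Rule (1) is then checked by applying the quadratic/skein relation of the underlying algebra to a single inter-component crossing and verifying that the trace, after the standard rescaling and normalization, returns precisely the coefficients $q^{-2}$, $q^2$ and $(q-q^{-1})$.

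The main obstacle is existence, and within it the verification that one and the same two-variable object satisfies \emph{both} rules with consistent coefficients. One must confirm that the normalization forced by the split-union computation (rule (2)) is exactly compatible with the coefficients forced by the inter-component skein move (rule (1)), so that the candidate extracted from the trace is internally consistent and genuinely lands in $\mathbb{C}[q^{\pm 1},E^{\pm 1}]$. Establishing this compatibility --- equivalently, that the mixed-crossing recursion used in the uniqueness step can never return two different answers --- is where the real work lies; once it is in place, the uniqueness argument above together with the invariance already guaranteed by the Markov trace completes the proof.
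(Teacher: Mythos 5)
You should first be aware that the paper you were given does not prove this statement at all: it is quoted verbatim from \cite{gola} as background in the preliminaries, so the only meaningful comparison is with the proof given there (and with its Homflypt prototype in \cite{chjukala}). Your uniqueness half is essentially that proof: induction on the number of components with an inner induction on the number of inter-component crossings, switching and smoothing only mixed crossings until one reaches split unions of knots, which rule (2) evaluates. You are also right to flag the anchoring issue: with the strict inequality $r>1$ the two rules, read literally, constrain nothing on knots, so one must read the degenerate $r=1$ instance of rule (2) as $\theta(K)=V(K)$; this is indeed how the invariant is set up in \cite{gola}, where the agreement with the Jones polynomial on knots is recorded separately.

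The genuine gap is in the existence half, and it is exactly the step you defer. The trace machinery on ${\rm FTL}_{d,n}(q)$ (equivalently on ${\rm Y}_{d,n}$) produces, for each $d$ and each subset $S\subseteq \mathbb{Z}/d\mathbb{Z}$ parametrizing a solution of the ${\rm E}$-system, an honest ambient isotopy invariant in which the quantity playing the role of $E$ is locked to the value $1/|S|$. ``Promoting that factor to a free variable $E$'' is therefore not a cosmetic move: invariance and rules (1)--(2) are only known at the countably many specializations $E=1/k$, $k\in\mathbb{N}$, and one must prove that, for every fixed link $L$, these values are interpolated by a single element $\theta(L)\in\mathbb{C}[q^{\pm1},E^{\pm1}]$. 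This is precisely the content of the actual construction: one either shows that the (rescaled, normalized) trace evaluation of any braid closure depends on the pair $(d,S)$ only through $E=1/|S|$, and polynomially so, or one writes down a closed formula (Lickorish's argument in the appendix of \cite{chjukala}, adapted in \cite{gola}) expressing the invariant as a finite sum of Jones polynomials of fusions of the components with coefficients manifestly in $\mathbb{C}[q^{\pm1},E^{\pm1}]$, which then visibly satisfies rule (2) and is checked against rule (1). Relatedly, your verification of rule (1) skips the one property that makes it true only for mixed crossings: for ${\rm E}$-system solutions one has ${\rm Tr}(\beta e_i)=E\,{\rm Tr}(\beta)$ exactly when the strands $i$ and $i+1$ of $\beta$ close to \emph{different} components of $\widehat{\beta}$, and this fails on same-component crossings. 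Without these two ingredients the candidate is not even defined for generic $E$, so the proposal as written establishes uniqueness but not existence.
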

The invariant $\theta(q,E)$ is topologically equivalent to the Jones polynomial on knots while it is stronger than the Jones
polynomial on links \cite[Theorem~5]{gola} (see Section~\ref{sec:geom}).

\section{The Temperley-Lieb algebra associated to the Coxeter group of type ${\mathtt B}$}\label{sectlb}
 We begin this section by defining the Temperley-Lieb algebra  of type ${\mathtt B}$  as a
 quotient of the Hecke algebra of type $\mathtt{B}$. This is derived from the definition for  an arbitrary Coxeter group
 \cite{grlo}.\smallbreak

As mentioned earlier, the classical Temperley-Lieb algebra can be expressed as a quotient of the Hecke algebra of type $\mathtt{A}$.
Based on this, Fan and Green defined the Temperley-Lieb algebras associated to any simply laced Coxeter group \cite{fangr}. This was
done by first considering the Hecke algebra associated to the respective Coxeter group and then naturally extending the defining
ideal of the classical case. Using the same procedure Green and Losonczy extended this definition to any Coxeter group \cite{grlo}.
Specifically, consider $(W,S)$ to be
an arbitrary Coxeter System, and let $H(W)$ be the associated Hecke algebra.  Then, the algebra $H(W)$ has a basis  consisting of
elements $T_w$, $w \in W$ that satisfy:
\begin{equation}\label{coxmult}
  T_{s} T_{w}= \left\{\begin{array}{lcl}
                    T_{sw}, & \mbox{if} & \ell(sw) > \ell (w)   \\
                   a_s T_{sw} + b_s T_w,& \mbox{if} & \ell(sw) < \ell(w)
                  \end{array}\right.
\end{equation}
where  $\ell$ is the length function in $W$ and $a_s$, $b_s$ are parameters that depend on $s\in S$ such that $a_s = a_t$ and $b_s
= b_t$ whenever $s$ and $t$ are conjugate in $W$. For further details the reader is referred to \cite[Chapter 7]{hump}. Let
now $J$ be the two-sided ideal of $H(W)$ that is generated by the following elements:
$$\sum_{w\in \langle s_i, s_j \rangle}T_w$$
where $(s_i, s_j)$ runs over all pairs of $S$ that correspond to adjacent nodes in the Dynkin diagram of $W$. Then the generalized
Temperley-Lieb algebra, ${\rm TL}(W)$, is defined as the quotient $H(W)/J$.

We shall specialize now the algebra ${\rm TL}(W)$ to the case of Coxeter systems of type $\mathtt{B}$. From the discussion above
and by considering also the change of generators in Remark~\ref{rem1}, we have that the defining two-sided ideal, denoted by
$J_B$, is generated by the elements:
   \begin{eqnarray*}
    g_{i,i+1} &=& 1 + \U(g_i + g_{i+1})+ \U^2(g_ig_{i+1} + g_{i+1}g_i) + \U^3 g_ig_{i+1}g_i\\
    g_{\mathtt{B}} &:=& 1 + \U g_1 + \V b_1 + \U\V (g_1b_1+b_1g_1)+ \U^2\V g_1b_1g_1 +
    \V^2\U b_1g_1b_1 \\
    &&+ (\U\V)^2 g_1b_1g_1b_1,
  \end{eqnarray*}
where $1\leq i \leq n-2$. Given that the elements $g_{i,i+1}$  are all conjugates of $g_{1,2}$ in ${\rm H}_n(\U, \V)$ (see
  \cite{gojukola}), we conclude that $J_B=\langle g_{\mathtt{B}}, g_{1,2}\rangle$.

\begin{definition}
  We define $\T:={\rm TL}_n^{\mathtt{B}}(\U,\V)$, the Temperley-Lieb algebra associated to the Coxeter group of type ${\mathtt B}$
  as the quotient ${\rm H}_n(\U, \V)/J_B$.
\end{definition}

\subsection{{\it A Markov trace on the algebra ${\rm TL}_n^\mathtt{B}$}}\label{traceHB}

The purpose of this section is to find the necessary and  sufficient conditions such that the trace defined in ${\rm H}_n(\U,\V)$
passes to $\T$. \smallbreak

 Let $W$ be a Coxeter group, and $H(W)$ the Hecke algebra associated to $W$. Now consider $b_s=a_s-1$ in (\ref{coxmult})  and set
 $x=\sum_{w\in W} T_w$. Observe that \cite[Lemma 3.2]{mathas}  is valid for every
finite Coxeter group, that is:
\begin{equation}\label{Coxrel}
  xT_s=a_sx, \quad \hbox{for all $s\in S$}
\end{equation}
Equation (\ref{Coxrel}) and direct computations prove the following two lemmas.

\begin{lemma}\label{multA}
The following holds in $H_n(\U,\V)$:
\begin{itemize}
  \item[i)] $g_1 g_{1,2}= g_{1,2}g_1=\U g_{1,2}$
  \item[ii)] $g_2 g_{1,2}= g_{1,2}g_2=\U g_{1,2}$
\end{itemize}
\end{lemma}

\begin{lemma}\label{multB}
In ${\rm H}_n(\U, \V)$ the following equations holds
\begin{itemize}
  \item[i)] $b_1 g_{\mathtt{B}}= g_{\mathtt{B}}b_1=\V g_{\mathtt{B}}$
  \item[ii)] $g_1 g_{\mathtt{B}}= g_{\mathtt{B}}g_1=\U g_{\mathtt{B}}$
\end{itemize}
  \end{lemma}

In analogy to ${\rm TL}_n(u)$, the trace ${\rm \tau}$ passes to the
quotient $\T$ if and only if ${\rm \tau}$ annihilates the defining ideal $\langle g_{1,2} , g_{\mathtt{B}} \rangle $ of $\T$:
\begin{equation}\label{iffeq}
 {\rm \tau} (m g_{\mathtt{B}} ) + {\rm \tau} (n g_{1,2} )= 0,
\end{equation}
where $m,n$ are in the linear basis of ${\rm H}_n (\U, \V)$. We shall determine now the necessary and sufficient
conditions so that \eqref{iffeq} holds. We will use induction on $n$. We start with the
following lemma:

\begin{lemma}\label{trh12}
The following hold in ${\rm H}_n (\U, \V)$:
\begin{align*}
{\rm \tau}(g_{1,2})& = (\U^2+1) (\U z)^2 + (\U^2+2)\U z +1 \\
{\rm \tau}(g_{\mathtt{B}})&=  \U^2\V^2 y^2 + ( \U \V + \U^3 \V^3) zy + (\V + \U^2 \V)y + ( \U + \U^3 \V^2)z +1
\end{align*}
\end{lemma}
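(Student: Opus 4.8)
The plan is to use linearity of $\tau$ to reduce to evaluating the trace on each monomial occurring in the expansions of $h_{1,2}$ and $h_{\mathtt B}$, and then to process each monomial with the four trace rules and the quadratic relations $h_1^2 = 1 + (\U-\U^{-1})h_1$ and $b_1^2 = 1 + (\V-\V^{-1})b_1$. The elementary values come straight from the Markov rules applied to the trivial base element: $\tau(h_1) = \tau(h_2) = z$ by rule (3) with $a=1$, and $\tau(b_1) = y$ by rule (4). Conjugation (rule (2)) gives $\tau(h_1 b_1) = \tau(b_1 h_1) = z\,\tau(b_1) = zy$, where rule (3) applies to $\tau(b_1 h_1)$ because $b_1 \in {\rm H}_1(\U,\V)$. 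These computations live in ${\rm H}_3(\U,\V)$ for $h_{1,2}$ and in ${\rm H}_2(\U,\V)$ for $h_{\mathtt B}$, and the trace is consistent across $n$.

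For $\tau(h_{1,2})$ the only monomials needing work are $\tau(h_1 h_2) = \tau(h_2 h_1) = z^2$ (peel off $h_2$, then $h_1$) and $\tau(h_1 h_2 h_1)$. For the latter I would move a factor $h_1$ around by conjugation, $\tau(h_1 h_2 h_1) = \tau(h_1^2 h_2)$, and then the quadratic relation gives $\tau(h_1^2 h_2) = \tau(h_2) + (\U-\U^{-1})\tau(h_1 h_2) = z + (\U-\U^{-1})z^2$. Substituting into the expansion of $h_{1,2}$ and collecting powers of $z$ produces the first formula $(\U^2+1)(\U z)^2 + (\U^2+2)\U z + 1$.

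For $\tau(h_{\mathtt B})$ the key auxiliary identity is $b_2 = h_1 b_1 h_1^{-1}$, i.e.\ $h_1 b_1 = b_2 h_1$, which rewrites words in $b_1$ into a shape where the looping Markov rule $\tau(a b_2) = y\,\tau(a)$ for $a \in {\rm H}_1(\U,\V)$ applies. Using conjugation and the quadratic relations as above gives $\tau(h_1 b_1 h_1) = \tau(h_1^2 b_1) = y + (\U-\U^{-1})zy$ and $\tau(b_1 h_1 b_1) = \tau(h_1 b_1^2) = z + (\V-\V^{-1})zy$, while the symmetric pair contributes $\tau(h_1 b_1) + \tau(b_1 h_1) = 2zy$.

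The main obstacle is the top monomial $h_1 b_1 h_1 b_1$ carrying the coefficient $(\U\V)^2$. I would first rewrite $h_1 b_1 h_1 = b_2 h_1^2$, so that $h_1 b_1 h_1 b_1 = b_2 h_1^2 b_1$, then expand via $h_1^2 = 1 + (\U-\U^{-1})h_1$ to get $b_2 b_1 + (\U-\U^{-1}) b_2 h_1 b_1$. The first term yields $\tau(b_2 b_1) = \tau(b_1 b_2) = y\,\tau(b_1) = y^2$ by the looping Markov rule, while the second collapses via $b_2 h_1 b_1 = h_1 b_1 h_1^{-1} h_1 b_1 = h_1 b_1^2$, giving $\tau(b_2 h_1 b_1) = z + (\V-\V^{-1})zy$. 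Hence $\tau(h_1 b_1 h_1 b_1) = y^2 + (\U-\U^{-1})[z + (\V-\V^{-1})zy]$. Substituting every monomial value into the expansion of $h_{\mathtt B}$ and collecting the coefficients of $1$, $z$, $y$, $y^2$ and $zy$ — where several $zy$ cross terms involving $\U^3\V$ and $\U\V^3$ cancel — yields the stated formula. The only genuine care required is this final bookkeeping step; everything before it is a mechanical application of the trace rules.
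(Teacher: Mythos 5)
Your proposal is correct and takes the same route the paper intends: the paper's proof is simply the remark that the lemma ``follows immediately from the defining rules of $\tau$,'' and your computation is precisely that direct evaluation, spelled out via the Markov/conjugation rules, the quadratic relations, and the identity $b_2 = h_1 b_1 h_1^{-1}$ to handle the monomials $h_1h_2h_1$, $b_1h_1b_1$ and $h_1b_1h_1b_1$. All intermediate values check out (in particular $\tau(h_1b_1h_1b_1) = y^2 + (\U-\U^{-1})\left[z + (\V-\V^{-1})zy\right]$), and the final bookkeeping, including the cancellation of the $\U^3\V$ and $\U\V^3$ cross terms, reproduces both stated formulas.
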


\begin{proof}
The proof follows immediately from the defining rules of ${\rm \tau}$.
\end{proof}

We shall treat each summand of \eqref{iffeq} separately. For the first summand we have the following:
\begin{proposition}\label{relationtypeb}
  For all $m\in {\rm H}_n(\U,\V)$ we have that

 $${\rm \tau} (mg_{\mathtt{B}})= p(\U,\V,z,y){\rm \tau}(g_{\mathtt{B}}),\quad \text{for all $n\geq 2$}$$
where $p(\U,\V,z,y)$ is a monomial in the variables $\U,\V,z, y$.
\end{proposition}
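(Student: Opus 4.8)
The plan is to prove the factorization by induction on $n$. By linearity of $\tau$ it is enough to let $m$ range over the basis $\mathtt{C}_n$ of \eqref{Hnbas}, and the whole argument rests on two mechanisms. The first is Lemma~\ref{multB}, which lets $h_{\mathtt B}$ absorb an adjacent $h_1$ or $b_1$ at the cost of the scalar $\U$ or $\V$; since $h_1h_{\mathtt B}=\U h_{\mathtt B}$ forces $h_1^{-1}h_{\mathtt B}=\U^{-1}h_{\mathtt B}$, and likewise for $b_1$, the inverses are absorbed as well. The second is a pair of reduction formulas for $\tau$ that I would record at the outset: for $P,Q\in{\rm H}_{n-1}(\U,\V)$ one has $\tau(P\,h_{n-1}\,Q)=z\,\tau(PQ)$ and $\tau(P\,b_n\,Q)=y\,\tau(PQ)$. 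Both follow the same way — conjugation moves $Q$ to the front, the Markov rule for $h_{n-1}$ (respectively the looping rule for $b_n$) strips off the new generator, and a second conjugation restores the order.

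For the base case $n=2$ I would compute $\tau(mh_{\mathtt B})$ directly on the finitely many basis elements $m_1m_2$ with $m_1\in\mathtt{M}_1$, $m_2\in\mathtt{M}_2$. Each such $m$ is a word in $h_1^{\pm1},b_1^{\pm1}$ — recall $b_2=h_1b_1h_1^{-1}$ — so iterating the left absorption identities collapses $mh_{\mathtt B}$ to $p\,h_{\mathtt B}$ with $p$ a monomial in $\U,\V$, and applying $\tau$ gives the claim.

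For the inductive step I would write a basis element as $m=A\,m_n$ with $A=m_1\cdots m_{n-1}\in\mathtt{C}_{n-1}$ and $m_n\in\mathtt{M}_n=\{1,\,b_n,\,h_{n-1}x:\,x\in\mathtt{M}_{n-1}\}$. When $m_n=1$ the assertion is the inductive hypothesis, since $h_{\mathtt B}\in{\rm H}_2(\U,\V)\subseteq{\rm H}_{n-1}(\U,\V)$. When $m_n=b_n$ the looping reduction handles it cleanly: $\tau(A\,b_n\,h_{\mathtt B})=\tau(h_{\mathtt B}A\,b_n)=y\,\tau(h_{\mathtt B}A)=y\,\tau(A\,h_{\mathtt B})$, and since $A$ is itself a basis element of ${\rm H}_{n-1}(\U,\V)$ the inductive hypothesis turns this into $y\,p_A\,\tau(h_{\mathtt B})$, preserving a monomial factor. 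When $m_n=h_{n-1}x$, with $x\in\mathtt{M}_{n-1}$, the braiding reduction gives $\tau(A\,h_{n-1}\,xh_{\mathtt B})=z\,\tau(Ax\,h_{\mathtt B})$, since $xh_{\mathtt B}\in{\rm H}_{n-1}(\U,\V)$.

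The step I expect to be the main obstacle is this last case, because $Ax$ multiplies the last block $m_{n-1}$ of $A$ with $x$, both drawn from $\mathtt{M}_{n-1}$, and is therefore a \emph{product} of basis blocks rather than a single basis element, so the inductive hypothesis does not apply to it verbatim. To reach $\tau(Ax\,h_{\mathtt B})$ one must first rewrite $Ax$ in the basis $\mathtt{C}_{n-1}$, and here a quadratic relation $h_i^2=1+(\U-\U^{-1})h_i$ (applied at the index where the two blocks collide) can split the trace into several lower-level contributions, each a multiple of $\tau(h_{\mathtt B})$; the crux is to verify that, after the absorption identities have cleared every $h_1^{\pm1},b_1^{\pm1}$ against $h_{\mathtt B}$, these contributions recombine into a single monomial in $\U,\V,z,y$ rather than a genuine polynomial. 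I would attempt this by peeling the indices of $m$ off from the top one at a time, arguing that at each stage the surviving factor lands again in a smaller ${\rm H}_k(\U,\V)$ to which the hypothesis applies, so that the accumulated coefficient is assembled only from factors in $\{\U^{\pm1},\V^{\pm1},z,y\}$.
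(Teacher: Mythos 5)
Your proposal follows the same route as the paper's proof: reduce by linearity to the inductive basis $\mathtt{C}_n$, settle $n=2$ by the absorption identities of Lemma~\ref{multB}, and induct on $n$ by splitting the top block $m_n\in\mathtt{M}_n$ into the cases $1$, $b_n$ and $h_{n-1}x$, which are then stripped off by conjugation combined with the Markov rules (3) and (4). All of these reductions are correct, and they are exactly the reductions the paper performs (the paper writes them as $\tau(wb_{n+1}h_{\mathtt{B}})=y\,\tau(wh_{\mathtt{B}})$ and $\tau(wh_n\cdots h_ib_i^a\,h_{\mathtt{B}})=z\,\tau(wh_{n-1}\cdots h_ib_i^a\,h_{\mathtt{B}})$, carrying $h_{\mathtt{B}}$ along by conjugation since $h_{\mathtt{B}}\in{\rm H}_2(\U,\V)\subseteq{\rm H}_n(\U,\V)$).

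The one place you depart from the paper is the ``main obstacle'' you flag: that $Ax$ multiplies two blocks drawn from $\mathtt{M}_{n-1}$ and is therefore not a basis element, so the inductive hypothesis \emph{with monomial coefficient} does not apply to it verbatim. The observation is accurate --- indeed the literal statement cannot hold for arbitrary $m$, since already $m=1+h_1$ gives $p=1+\U$ --- but the remedy you sketch (rewriting $Ax$ in the basis $\mathtt{C}_{n-1}$, tracking the splittings caused by the quadratic relations, and verifying that all contributions recombine into a single monomial) is far heavier than what is needed, and you leave it unexecuted. The efficient fix, which is in effect what the paper does by phrasing its inductive hypothesis for arbitrary $w\in{\rm H}_n(\U,\V)$ rather than only for basis elements, is to weaken the induction invariant to: $\tau(mh_{\mathtt{B}})$ lies in $\mathbb{C}[\U^{\pm1},\V^{\pm1},z,y]\cdot\tau(h_{\mathtt{B}})$ for every $m\in{\rm H}_{n-1}(\U,\V)$. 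This condition is linear in $m$, so knowing it on $\mathtt{C}_{n-1}$ gives it on all of ${\rm H}_{n-1}(\U,\V)$, and it then applies directly to $Ax$ with no rewriting at all; the induction closes, and divisibility of $\tau(mh_{\mathtt{B}})$ by $\tau(h_{\mathtt{B}})$ is precisely what Theorem~\ref{equivstat} consumes --- whether the scalar is a monomial or a polynomial is immaterial there. So your argument is correct and complete once the invariant is stated in this relaxed form; the recombination check you anticipated as the crux never has to be performed.
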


\begin{proof}
  By linearity of the trace, it is enough to prove the statement for an element $m$ in the inductive basis $\mathtt{C}_n$. We will
  proceed by induction. For $n=2$ the result follows directly by Lemma \ref{multB}. Suppose now that the argument holds for any
  $w\in {\rm H}_n(\U,\V)$, and let $m\in {\rm H}_{n+1}(\U,\V)$, where $m=wb_{n+1}$ or $m=wg_n\dots g_ib_i^a$, with $a=0,1$ and
  $w\in {\rm H}_n(\U,\V)$. We have that
  \begin{eqnarray*}
    {\rm \tau}(wb_{n+1}) &=& y{\rm \tau}(w) \\
    {\rm \tau}(wg_n\dots g_ib_i^a)  &=& z{\rm \tau}(\alpha)
  \end{eqnarray*}
where $\alpha=wg_{n-1}\dots g_ib_i^a \in {\rm H}_n(\U,\V)$ and so the result follows by the induction hypothesis.
\end{proof}

\begin{lemma}\label{h1,2b}
  For $i\geq1$ we have that
 $${\rm \tau}(b_ig_{1,2})=y{\rm \tau}(g_{1,2})  $$

\end{lemma}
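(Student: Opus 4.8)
The plan is to reduce the arbitrary index $i$ to the single base case $i=1$ and then to compute $\tau(b_1h_{1,2})$ directly from the defining rules of $\tau$. The three tools I would use throughout are: the identity $b_{i+1}=h_ib_ih_i^{-1}$, which is immediate from $b_k=h_{k-1}\cdots h_1b_1h_1^{-1}\cdots h_{k-1}^{-1}$; the conjugation property $\tau(XY)=\tau(YX)$; and the absorption relations $h_1h_{1,2}=h_{1,2}h_1=\U h_{1,2}$ and $h_2h_{1,2}=h_{1,2}h_2=\U h_{1,2}$ from Lemma~\ref{multA} (so that $h_1^{-1}h_{1,2}=\U^{-1}h_{1,2}$, etc.).

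First I would dispose of the large indices. For $i\geq 4$ the element $h_{1,2}$ involves only $h_1,h_2$ and therefore lies in ${\rm H}_{i-1}(\U,\V)$; hence, by the conjugation property followed by the Markov rule $(4)$ for the looping generator,
\[
\tau(b_ih_{1,2})=\tau(h_{1,2}b_i)=y\,\tau(h_{1,2}),
\]
which is exactly the assertion. This settles every $i\geq 4$ in one line.

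Next I would reduce the remaining indices $i=2,3$ to $i=1$. Using $b_2=h_1b_1h_1^{-1}$ together with $h_1^{-1}h_{1,2}=\U^{-1}h_{1,2}$ and $h_{1,2}h_1=\U h_{1,2}$, the conjugation property gives
\[
\tau(b_2h_{1,2})=\tau(h_1b_1h_1^{-1}h_{1,2})=\U^{-1}\tau(h_1b_1h_{1,2})=\U^{-1}\tau(b_1h_{1,2}h_1)=\tau(b_1h_{1,2}),
\]
the last equality being $\U^{-1}\cdot\U\,\tau(b_1h_{1,2})$. The same computation with $b_3=h_2b_2h_2^{-1}$ and the absorption of $h_2$ yields $\tau(b_3h_{1,2})=\tau(b_2h_{1,2})=\tau(b_1h_{1,2})$. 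It is worth noting that this telescoping stops at $i=3$: passing from $b_3$ to $b_4$ would require $h_3$, which is neither absorbed by $h_{1,2}$ nor commutes with it, so the two regimes $\{1,2,3\}$ and $\{i\geq 4\}$ cannot be linked by the same trick and must instead be joined through the explicit value of the base case.

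The main work is therefore the base case $\tau(b_1h_{1,2})=y\,\tau(h_{1,2})$, which I would obtain by expanding $b_1h_{1,2}$ and evaluating each monomial with the Markov rules $(3)$, $(4)$ and conjugation: $\tau(b_1)=y$, $\tau(b_1h_1)=\tau(b_1h_2)=zy$, $\tau(b_1h_1h_2)=\tau(b_1h_2h_1)=z^2y$, and, using cyclic permutation together with $h_1^2=1+(\U-\U^{-1})h_1$, $\tau(b_1h_1h_2h_1)=zy+(\U-\U^{-1})z^2y$. Weighting these by the coefficients $1,\U,\U,\U^2,\U^2,\U^3$ of $h_{1,2}$ and collecting terms collapses the sum to
\[
\tau(b_1h_{1,2})=y\bigl[\,1+(\U^2+2)\U z+(\U^2+1)\U^2z^2\,\bigr],
\]
which is precisely $y\,\tau(h_{1,2})$ by Lemma~\ref{trh12}. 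The only delicate point in this step is the bookkeeping of the conjugation moves needed to bring each monomial into a form where a top generator can be stripped off by a Markov rule; once that is set up, the rest is a direct substitution, and combining the three steps gives $\tau(b_ih_{1,2})=y\,\tau(h_{1,2})$ for all $i\geq 1$.
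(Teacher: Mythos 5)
Your proof is correct and takes essentially the same route as the paper's: the case $i\geq 4$ by conjugation plus the Markov rule for the looping generator, the cases $i=2,3$ by writing $b_{i}$ as a conjugate and absorbing $h_1,h_2$ into $h_{1,2}$ via Lemma~\ref{multA}, and the base case $i=1$ from the defining rules of $\tau$. The only difference is that you carry out the $i=1$ computation explicitly (correctly, and matching Lemma~\ref{trh12}), where the paper simply asserts it ``follows easily from trace's rules.''
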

\begin{proof}
  First note that ${\rm \tau}(b_1g_{1,2})=y{\rm \tau}(g_{1,2})$ follows easily by the trace rules, since $b_1 \in {\rm
  H}_1(\U,\V)$. For $i=2$ we have that
  $${\rm \tau}(b_2g_{1,2})={\rm \tau}(g_1b_1g_1^{-1}g_{1,2}).$$
From Lemma \ref{multA} we obtain
  $${\rm \tau}(g_1b_1g_1^{-1}g_{1,2})=\U^{-1} {\rm \tau}(g_1b_1g_{1,2})=\U^{-1} {\rm \tau}(b_1g_{1,2}g_1)= {\rm
  \tau}(b_1g_{1,2})=y{\rm \tau}(g_{1,2}).$$
The case $i=3$ is completely analogous, while for $i \geq 4$ the result follows immediately by the trace rules.
\end{proof}
The following proposition deals with the second term of \eqref{iffeq}.
\begin{proposition}\label{proptypeA}
  Let $n\geq 3$. For all $m\in {\rm H}_n(\U,\V)$ we have that
\begin{equation}\label{relationtypea}
  {\rm \tau}(mg_{1,2})= \left\{\begin{array}{l}
                    p(\U,\V,z,y){\rm \tau}(g_{1,2})   \\
                    p(\U,\V,z,y){\rm \tau}(b_1g_1b_1g_{1,2})   \\
                    p(\U,\V,z,y){\rm \tau}(b_1g_1b_1g_2g_1b_1g_{1,2})
                  \end{array}\right.
  \end{equation}
where $p(\U,\V,z,y)$ is a monomial in the variables $\U,\V,z, y$.
\end{proposition}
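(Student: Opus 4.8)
The plan is to use linearity of ${\rm \tau}$ to reduce to the case where $m$ ranges over the basis $\mathtt{C}_n$ of \eqref{Hnbas}, and then to induct on $n\geq 3$. The base case is $n=3$; the inductive step peels off the top block of a basis element, writing $m=w\,m_{n+1}$ with $w\in\mathtt{C}_n\subseteq{\rm H}_n(\U,\V)$ and $m_{n+1}\in\mathtt{M}_{n+1}$. The recurring principle is that $h_{1,2}\in{\rm H}_3\subseteq{\rm H}_n$ for every $n\geq3$, so the trace rules can move the top-index generators of $m$ away from $h_{1,2}$ and reduce the index by one.

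For the inductive step I would distinguish the three shapes of $m_{n+1}$. If $m_{n+1}=1$ then $m\in{\rm H}_n$ and the hypothesis applies verbatim. If $m_{n+1}=b_{n+1}$, the conjugation rule gives ${\rm \tau}(w\,b_{n+1}\,h_{1,2})={\rm \tau}(h_{1,2}\,w\,b_{n+1})$, and since $h_{1,2}w\in{\rm H}_n$ the Markov rule~(4) for the looping generator yields $y\,{\rm \tau}(h_{1,2}w)=y\,{\rm \tau}(w\,h_{1,2})$, to which the hypothesis applies. If $m_{n+1}=h_n x$ with $x\in\mathtt{M}_n\subseteq{\rm H}_n$, I would rearrange cyclically, ${\rm \tau}(w\,h_n\,x\,h_{1,2})={\rm \tau}(x\,h_{1,2}\,w\,h_n)$; since $x\,h_{1,2}\,w\in{\rm H}_n$, the Markov rule~(3) for the braiding generator gives $z\,{\rm \tau}(x\,h_{1,2}\,w)=z\,{\rm \tau}\bigl((wx)\,h_{1,2}\bigr)$ with $wx\in{\rm H}_n$, and again the hypothesis finishes. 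In each case ${\rm \tau}(m\,h_{1,2})$ equals $1$, $y$, or $z$ times a value already of the required shape, so the three forms are preserved and the monomial $p(\U,\V,z,y)$ merely acquires a factor.

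The whole content therefore lies in the base case $n=3$, where $h_{1,2}$ is no longer separated from the top strand and must be handled as an explicit element of ${\rm H}_3(\U,\V)$. Here I would run through $\mathtt{C}_3=\{m_1m_2m_3\}$ and classify each element by the number of looping letters it carries once $b_2=h_1b_1h_1^{-1}$ and $b_3=h_2h_1b_1h_1^{-1}h_2^{-1}$ are expanded; since each block $m_i$ contributes at most one $b_1$, this number is $0$, $1$, $2$ or $3$. Every braiding letter $h_1$ or $h_2$ is brought adjacent to $h_{1,2}$ by conjugation and absorbed with a factor $\U^{\pm1}$ via Lemma~\ref{multA} (after commuting any $b_1$ past $h_2$), while a single surviving loop is removed by Lemma~\ref{h1,2b}; this collapses the $0$- and $1$-loop cases to a monomial times ${\rm \tau}(h_{1,2})$. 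For two loops, the relation $h_1b_1h_1b_1=b_1h_1b_1h_1$ together with these absorptions normalizes the word to $b_1h_1b_1$, producing the second form ${\rm \tau}(b_1h_1b_1h_{1,2})$. For three loops, the maximal basis element $b_1\cdot(h_1b_1)\cdot(h_2h_1b_1)$ is literally $b_1h_1b_1h_2h_1b_1$, giving the third form at once, and every other three-loop word reduces to it.

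The main obstacle is exactly this base-case bookkeeping: one must verify that none of the manipulations changes the loop count beyond what Lemma~\ref{multA}, Lemma~\ref{h1,2b} and the looping relation permit, so that precisely the three listed forms occur and no fourth is created — the underlying reason being that on three strands at most three independent loops can survive, which is why the hierarchy terminates at $b_1h_1b_1h_2h_1b_1h_{1,2}$. The genuinely delicate elements are those with a $b_1$ sandwiched between braiding letters, such as $m_3=h_2h_1b_1$, where the loop must first be commuted past $h_2$ before the $\U$-absorptions of Lemma~\ref{multA} can be applied.
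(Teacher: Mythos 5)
Your proposal is correct and follows essentially the same route as the paper's own proof: reduction by linearity to the basis $\mathtt{C}_n$, induction on $n$ with base case $n=3$, absorption of braiding letters via Lemma~\ref{multA}, removal of a single loop via Lemma~\ref{h1,2b}, and an inductive step that peels off the top block $m_{n+1}$ using the conjugation and Markov rules. The only difference is organizational --- you classify the $n=3$ basis elements by their loop count where the paper enumerates $\mathsf{C}_2$ and the possible $m_3$'s explicitly --- but the reductions performed in each class are exactly those the paper records, terminating in the same three forms ${\rm \tau}(h_{1,2})$, ${\rm \tau}(b_1h_1b_1h_{1,2})$ and ${\rm \tau}(b_1h_1b_1h_2h_1b_1h_{1,2})$.
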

\begin{proof}
Since  $\tau$ is linear, it's enough to prove (\ref{relationtypea}) for any $m$ in the basis $\mathsf{C}_n$  from
${\rm H}_n(\U,\V)$. Again, we will use induction on $n$. We start by proving that the argument holds for $n=3$.
First note that
  $$\mathsf{C}_2=\{1,b_2,g_1,g_1b_1,b_1,b_1b_2,b_1g_1,b_1g_1b_1\}$$
From Lemmas~\ref{h1,2b} and \ref{multA} we have that
 $$ \begin{array}{rclrcl}
    {\rm \tau}(b_2g_{1,2}) & = &y {\rm \tau}(g_{1,2});\quad  &{\rm \tau}(g_1g_{1,2})  &  =&\U {\rm \tau}(g_{1,2})  \\
    {\rm \tau}(g_1b_1g_{1,2}) & = &\U y {\rm \tau}(g_{1,2})  ;\quad& {\rm \tau}(b_1g_{1,2}) & = & y {\rm \tau}(g_{1,2}) \\
    {\rm \tau}(b_1g_1g_{1,2}) & = & \U y {\rm \tau}(g_{1,2}) ;\quad&{\rm \tau}(b_1b_2g_{1,2})  & = &  \U^{-1}{\rm
    \tau}(b_1g_1b_1g_{1,2})
  \end{array}$$
Suppose now that $m\in C_3$. This means that $m=wm_3$ for some $w\in \mathsf{C}_2$ and $m_3\in \{g_2g_1b_1, g_2b_2, b_3, g_2g_1, g_2,
1\}$. From the previous results and Lemma~\ref{multA} we obtain that
$$\begin{array}{rclrcl}
  {\rm \tau}(wg_2b_2g_{1,2}) & = &\U^{-1}{\rm \tau}(wg_2g_1b_1g_{1,2}) ;\quad &{\rm \tau}(wb_3g_{1,2})  & = & \U^{-2}{\rm
  \tau}(wg_2g_1b_1g_{1,2}) \\
  {\rm \tau}(wg_2g_1g_{1,2}) & = &\U^2 {\rm \tau}(wg_{1,2}) ;\quad &{\rm \tau}(wg_2g_{1,2})  & = & \U {\rm \tau}(wg_{1,2})
\end{array}$$

Therefore, we only have to study ${\rm \tau}(wg_2g_1b_1g_{1,2})$. Replacing $w$ and by applying previous lemmas
and using the trace rules on each element in $\mathsf{C}_2$, we have:
$$ \begin{array}{rclrcl}
   {\rm \tau}(g_2g_1b_1g_{1,2})   & = & \U^2 y {\rm \tau}(g_{1,2}) ;\quad&  {\rm \tau}(b_2g_2g_1b_1g_{1,2})  & = &  \U {\rm
   \tau}(b_1g_1b_1g_{1,2}) \\
   {\rm \tau}(g_1g_2g_1b_1g_{1,2})     &  =&  \U^3 y {\rm \tau}(g_{1,2});\quad&  {\rm \tau}(g_1b_1 g_2g_1b_1g_{1,2})  & = & \U^2 {\rm
   \tau}(b_1g_1b_1g_{1,2})  \\
      {\rm \tau}(b_1g_2g_1b_1g_{1,2}) & = &  \U {\rm \tau}(b_1g_1b_1g_{1,2});\quad&  {\rm \tau}(b_1g_1g_2g_1b_1g_{1,2}) & = &  \U^2
      {\rm \tau}(b_1g_1b_1g_{1,2})
  \end{array}$$
$${\rm \tau}(b_1b_2g_2g_1b_1g_{1,2}) = \U^{-1}{\rm \tau}(b_1g_1b_1g_2g_1b_1g_{1,2})$$
From the above, the result follows for $n=3$. Finally, suppose that the argument holds for $m\in {\rm H}_n(\U,\V)$ and let $m\in
{\rm H}_{n+1}(\U,\V)$. We have that $m=wb_{n+1}$ or $m=wg_n\dots g_ib_i^a$, with $a=0,1$ and $w\in {\rm H}_n(\U,\V)$. Since we have
that
  \begin{eqnarray*}
    {\rm \tau}(wb_{n+1}) &=& y{\rm \tau}(w) \\
    {\rm \tau}(wg_n\dots g_ib_i^a)  &=& z{\rm \tau}(\alpha),\quad \text{where $\alpha=wg_{n-1}\dots g_ib_i^a \in {\rm H}_n(\U,\V)$}
  \end{eqnarray*}
  then result follows by the induction hypothesis.
\end{proof}
The discussion above suggests that \eqref{iffeq} reduces to a homogenous system of four equations of the trace parameters $z$ and
$y$, namely:
\begin{theorem}\label{equivstat}
  The following statements are equivalent
  \begin{itemize}
    \item[i)] ${\rm \tau}(mg_{1,2})+{\rm \tau}(ng_{\mathtt{B}})=0\quad \text{for all $m,n \in {\rm H}_n(\U,\V)$}$
    \item[ii)]${\rm \tau}(g_{\mathtt{B}})={\rm \tau}(g_{1,2})={\rm \tau}(b_1g_1b_1g_{1,2})={\rm
        \tau}(b_1g_1b_1g_2g_1b_1g_{1,2})=0  $
  \end{itemize}
  \end{theorem}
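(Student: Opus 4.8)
The plan is to recognize that Theorem~\ref{equivstat} is essentially an assembly of Propositions~\ref{relationtypeb} and~\ref{proptypeA}, which have already carried out the substantive reduction, together with a simple decoupling of the bilinear shape of condition~(i). First I would observe that in statement~(i) the elements $m$ and $n$ range \emph{independently} over all of ${\rm H}_n(\U,\V)$ and that ${\rm \tau}$ is linear, so $(m,n)\mapsto {\rm \tau}(mh_{1,2})+{\rm \tau}(nh_{\mathtt{B}})$ is a linear functional on ${\rm H}_n(\U,\V)\oplus {\rm H}_n(\U,\V)$. Setting $n=0$ shows that (i) forces ${\rm \tau}(mh_{1,2})=0$ for every $m$, and setting $m=0$ forces ${\rm \tau}(nh_{\mathtt{B}})=0$ for every $n$; conversely, if both families vanish their sum is identically zero. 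Thus (i) is equivalent to the conjunction ``${\rm \tau}(mh_{1,2})=0$ for all $m$'' and ``${\rm \tau}(nh_{\mathtt{B}})=0$ for all $n$''.

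Next I would dispatch the looping side. By Proposition~\ref{relationtypeb}, for every $n$ one has ${\rm \tau}(nh_{\mathtt{B}})=p(\U,\V,z,y)\,{\rm \tau}(h_{\mathtt{B}})$ with $p$ a monomial. Taking $n=1$ shows that ``${\rm \tau}(nh_{\mathtt{B}})=0$ for all $n$'' forces ${\rm \tau}(h_{\mathtt{B}})=0$; conversely, ${\rm \tau}(h_{\mathtt{B}})=0$ makes every such trace vanish, being a scalar multiple of ${\rm \tau}(h_{\mathtt{B}})$. Hence this half of (i) is equivalent to the single equation ${\rm \tau}(h_{\mathtt{B}})=0$.

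The braiding side is handled symmetrically but now yields three conditions. Working in the range $n\geq 3$, where $h_{1,2}$ and the test elements below all genuinely exist, Proposition~\ref{proptypeA} tells us that every ${\rm \tau}(mh_{1,2})$ is a monomial multiple of one of the three distinguished traces ${\rm \tau}(h_{1,2})$, ${\rm \tau}(b_1h_1b_1h_{1,2})$ and ${\rm \tau}(b_1h_1b_1h_2h_1b_1h_{1,2})$. Consequently, if these three vanish then ${\rm \tau}(mh_{1,2})=0$ for all $m$. For the reverse implication I would simply substitute the three test elements $m=1$, $m=b_1h_1b_1$ and $m=b_1h_1b_1h_2h_1b_1$ into the hypothesis ${\rm \tau}(mh_{1,2})=0$, reading off the vanishing of each distinguished trace in turn. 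Combining both sides produces exactly the four equations of~(ii), and the equivalence follows.

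The genuinely hard work is already packaged in Propositions~\ref{relationtypeb} and~\ref{proptypeA}, so the remaining task is organizational rather than computational and I do not expect a real obstacle. The two points to get right are the decoupling of~(i) into its two independent families (which uses that $m,n$ range over the whole algebra, not merely over basis elements) and the observation that the trichotomy of Proposition~\ref{proptypeA} is \emph{exhaustive}, so that the three braiding-side traces listed in~(ii) are precisely those forced to appear. I would also be careful to state the theorem for $n\geq 3$, the range in which $h_{1,2}$ and the element $b_1h_1b_1h_2h_1b_1h_{1,2}$ are bona fide members of ${\rm H}_n(\U,\V)$.
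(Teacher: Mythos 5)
Your proposal is correct and follows essentially the same route as the paper: the forward direction is obtained by specializing $(m,n)$ to the test elements, and the converse is exactly the content of Propositions~\ref{relationtypeb} and~\ref{proptypeA}. In fact your write-up is slightly more careful than the paper's one-line proof (which only mentions the substitution $m=1$, $n=0$), since you make explicit the decoupling of (i) into two independent families and list all four substitutions needed to extract the equations of (ii).
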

  \begin{proof}
  Since (i) holds for all $m,n \in {\rm H}_n(\U,\V)$, then it must also hold for $m=1$ and $n=0$. Thus, we deduce argument (ii). The
  converse is a  direct consequence of Propositions~\ref{relationtypeb} and \ref{proptypeA}.
\end{proof}

The following lemma will be used in the proof of Theorem~\ref{tracetlb} below. We have that:

\begin{lemma}\label{trb1h1b1}
The following equations hold:\[
\begin{array}{l l c l }
(i) & \tau (b_1 g_1 b_1 g_{1,2}) &=& \V^{-1} \left ( \U ( 1+ \U z + \U^3 z) ( \V y^2 + \U(\V + ( \V^2 - 1) y ) z ) \right )\\
(ii)& \tau( b_1 g_1 b_1 g_2 g_1 b_1 g_{1,2})& = &\V^{-2} \left ( \U^3 ( \V^2 y^3 + \U(2+ \U^2) \V y ( \V +( \V^2 -1) y )z
\right.\\
 & & &\left. + \U^2 ( 1+ \U^2) ( y + \V( \V^2 -1) (1+ \V y) )z^2 \right )
\end{array}
\]
\end{lemma}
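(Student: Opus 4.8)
The plan is to prove both identities by a direct computation that rests on only four tools: the defining rules (1)--(4) of the trace $\tau$ from Subsection~\ref{HeckeB}, the quadratic relations $h_i^2=1+(\U-\U^{-1})h_i$ and $b_1^2=1+(\V-\V^{-1})b_1$, the braid relations, and the conjugation identities $h_kb_k=b_{k+1}h_k$ coming from $b_k:=h_{k-1}\cdots h_1b_1h_1^{-1}\cdots h_{k-1}^{-1}$. For part (i) I would expand $h_{1,2}=1+\U(h_1+h_2)+\U^2(h_1h_2+h_2h_1)+\U^3h_1h_2h_1$ and apply $\tau$ termwise. Each resulting word contains $h_2$ at most once; after moving a trailing $h_1$ to the front by conjugation (rule (2)) one peels the $h_2$ via $\tau(Xh_2)=z\,\tau(X)$, and the term $h_1h_2h_1$ is handled with $h_1^2=1+(\U-\U^{-1})h_1$. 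This collapses everything onto the two scalars $\tau(b_1h_1b_1)$ and $\tau(b_1h_1b_1h_1)$, yielding $\tau(b_1h_1b_1h_{1,2})=(1+\U z+\U^3z)\bigl(\tau(b_1h_1b_1)+\U\,\tau(b_1h_1b_1h_1)\bigr)$. One computes $\tau(b_1h_1b_1)=z(1+(\V-\V^{-1})y)$ by cycling $b_1$ to the front and using the quadratic relation for $b_1$, while $\tau(b_1h_1b_1h_1)$ is reduced through $b_1h_1b_1=b_1b_2h_1$ (from $h_1b_1=b_2h_1$) to $\tau(b_1b_2)=y^2$ and $\tau(b_1b_2h_1)$; substituting and clearing a factor $\V^{-1}$ produces the stated closed form.

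The crux of part (ii) is a preliminary collapse of the long word \emph{at the algebra level}. Using $b_1h_1b_1=b_1b_2h_1$, the braid relation $h_1h_2h_1=h_2h_1h_2$, the commutation $b_1h_2=h_2b_1$, and $h_kb_k=b_{k+1}h_k$, one rewrites $b_1h_1b_1h_2h_1b_1=b_1b_2h_2b_2h_1h_2$, and then Lemma~\ref{multA} in the form $h_1h_{1,2}=h_2h_{1,2}=\U h_{1,2}$ lets the trailing generators be absorbed into $h_{1,2}$. Carrying this out gives the clean identity $b_1h_1b_1h_2h_1b_1h_{1,2}=\U^3\,b_1b_2b_3h_{1,2}$, so that $\tau(b_1h_1b_1h_2h_1b_1h_{1,2})=\U^3\,\tau(b_1b_2b_3h_{1,2})$. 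This reduces (ii) to computing $\tau(b_1b_2b_3h_{1,2})$, which I again attack by expanding $h_{1,2}$ and evaluating $\tau(b_1b_2b_3w)$ for $w\in\{1,h_1,h_2,h_1h_2,h_2h_1,h_1h_2h_1\}$: the term $w=1$ gives $y^3$ by the looping rule, and most other words, after conjugation and the identities $h_kb_k=b_{k+1}h_k$, $b_{k+1}h_k=h_kb_k$, reduce to the form (element of $H_2$)$\cdot h_2$ or (element of $H_2$)$\cdot b_3$ and peel via $\tau(\cdot\,h_2)=z\,\tau(\cdot)$ and $\tau(\cdot\,b_3)=y\,\tau(\cdot)$.

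I expect the main obstacle to be exactly the monomials $h_2h_1$ and $h_1h_2h_1$, which produce intermediate traces such as $\tau(b_2b_1b_2h_1)$ that are \emph{fixed} by the naive cycle-and-peel procedure: the recursion closes on itself and yields no value. This degeneracy occurs precisely because $b_1$ and $b_2$ do not commute (equivalently $h_1^2$ does not commute with $b_1$), so cyclic invariance alone cannot lower the complexity. The way around it is to first express the obstructing $H_2$-element in the standard basis; using $h_1b_1=b_2h_1$, $b_1h_1b_1=b_1b_2h_1$, and the two quadratic relations one finds $b_1b_2b_1=b_2+(\V-\V^{-1})b_1b_2+(\U-\U^{-1})(b_2-b_1)h_1$, after which every summand peels cleanly and gives, for instance, $\tau(b_2b_1b_2h_1)=z\bigl((\V-\V^{-1})+(1+(\V-\V^{-1})^2)y\bigr)$. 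Assembling the six contributions — which organize themselves into a constant, a linear, and a quadratic part in $z$ — and substituting $\V-\V^{-1}$ and $\U-\U^{-1}$ throughout recovers the claimed expression; the only genuine difficulty is the bookkeeping of the $z^1$ and $z^2$ coefficients, all other steps being routine applications of the trace rules.
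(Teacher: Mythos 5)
Your proposal is correct, and its key claims all check out under direct verification: the factorization $\tau(b_1h_1b_1h_{1,2})=(1+\U z+\U^3 z)\bigl(\tau(b_1h_1b_1)+\U\,\tau(b_1h_1b_1h_1)\bigr)$ with $\tau(b_1h_1b_1)=z\bigl(1+(\V-\V^{-1})y\bigr)$ and $\tau(b_1h_1b_1h_1)=y^2+(\U-\U^{-1})z\bigl(1+(\V-\V^{-1})y\bigr)$; the algebra-level collapse $b_1h_1b_1h_2h_1b_1h_{1,2}=\U^3\,b_1b_2b_3h_{1,2}$ (via $b_1h_1b_1=b_1b_2h_1$, the braid relation, $b_1h_2=h_2b_1$, $h_2b_2=b_3h_2$, and Lemma~\ref{multA}); the standard-basis expansion $b_1b_2b_1=b_2+(\V-\V^{-1})b_1b_2+(\U-\U^{-1})(b_2-b_1)h_1$; and the value $\tau(b_2b_1b_2h_1)=z\bigl((\V-\V^{-1})+(1+(\V-\V^{-1})^2)y\bigr)$. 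Assembling the six terms of $\U^3\tau(b_1b_2b_3h_{1,2})$ does reproduce formula (ii), and clearing the factor $\V^{-1}$ in part (i) gives exactly the stated closed form. Since the paper's own proof consists of the single sentence that the lemma follows from ``a long straightforward computation using the trace rules,'' your write-up supplies precisely the content the paper omits; in spirit the two approaches coincide (expand $h_{1,2}$ and peel with the Markov rules), but you contribute two devices that genuinely organize the computation: the reduction of the six-letter word to $\U^3 b_1b_2b_3h_{1,2}$ by absorbing $h_2h_1h_2$ into the ideal generator, and the explicit diagnosis and resolution of the cyclically fixed trace $\tau(b_1b_2b_1h_1)$ --- the one place where cycle-and-peel alone stalls, precisely because $b_1$ and $b_2$ do not commute. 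Your reduction is also consistent with the paper's own bookkeeping in Proposition~\ref{proptypeA}, where the identity $\tau(b_1b_2h_2h_1b_1h_{1,2})=\U^{-1}\tau(b_1h_1b_1h_2h_1b_1h_{1,2})$ relates the same two elements in the opposite direction.
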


\begin{proof}
The proof is a long straightforward computation using the rules of $\tau$.
\end{proof}

We are now able to give the necessary and sufficient conditions for $\tau$ to pass to $\T (\U,
\V)$. Indeed, we have:
\begin{theorem}\label{tracetlb}
The trace $\tau$ passes to the quotient algebra $\T (\U,\V)$ if and only if the trace parameters $z$ and $y$ take one of
the following values.

\begin{itemize}
\item[(i)] $z = -\frac{1}{\U}$ and $y = -\frac{1}{\V}$, \hspace{1.6cm} (ii)\ $z = -\frac{1}{\U}$ and $y = \V$,
\item[(iii)]\ $z = -\frac{1}{\U (1 + \U^2)}$ and  $y = -\frac{1}{\V}$,  \qquad (iv)\  $z = -\frac{1}{\U (1 + \U^2)}$ and $y =
    \frac{-1 + \V^2}{(1 + \U^2) \V}$.
\end{itemize}
\end{theorem}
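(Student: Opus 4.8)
The plan is to invoke Theorem~\ref{equivstat}, which reduces the claim to solving the system
$$\tau(h_{\mathtt{B}}) = \tau(h_{1,2}) = \tau(b_1 h_1 b_1 h_{1,2}) = \tau(b_1 h_1 b_1 h_2 h_1 b_1 h_{1,2}) = 0$$
for the trace parameters $z$ and $y$. Since $\tau$ factors through $\T(\U,\V)$ precisely when these four quantities vanish simultaneously, the theorem amounts to determining all pairs $(z,y)$ satisfying the four equations. My strategy is to use the two equations whose closed forms are given in Lemma~\ref{trh12} to pin down a finite list of candidate pairs, and then to verify that the two remaining equations of Lemma~\ref{trb1h1b1} hold automatically at each candidate.

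First I would impose $\tau(h_{1,2})=0$. By Lemma~\ref{trh12} this is the quadratic $(\U^2+1)\U^2 z^2 + (\U^2+2)\U z + 1 = 0$ in $z$ alone, which factors as $(\U^2+1)\U^2\bigl(z + \tfrac{1}{\U}\bigr)\bigl(z + \tfrac{1}{\U(1+\U^2)}\bigr) = 0$. Hence $z \in \{-\tfrac{1}{\U},\, -\tfrac{1}{\U(1+\U^2)}\}$, exactly the two values appearing in the statement. Next, for each admissible $z$, I would impose $\tau(h_{\mathtt B})=0$; by Lemma~\ref{trh12} this is a quadratic in $y$ whose coefficients now involve $z$. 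Substituting $z=-\tfrac{1}{\U}$ yields (after clearing $\U^2\V$) the equation $\V y^2 + (1-\V^2)y - \V = 0$, with roots $y=\V$ and $y=-\tfrac{1}{\V}$, producing candidates (i) and (ii); substituting $z=-\tfrac{1}{\U(1+\U^2)}$ yields $\V^2(1+\U^2)y^2 + \V(2+\U^2-\V^2)y + (1-\V^2)=0$, with roots $y=-\tfrac{1}{\V}$ and $y=\tfrac{-1+\V^2}{(1+\U^2)\V}$, producing candidates (iii) and (iv). Because $\tau(h_{1,2})=0$ and $\tau(h_{\mathtt B})=0$ are both necessary, the solution set is contained in these four pairs.

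It then remains to verify the two equations of Lemma~\ref{trb1h1b1} at each candidate. For $\tau(b_1h_1b_1h_{1,2})$ the key observation is that the formula of Lemma~\ref{trb1h1b1} factors as a nonzero constant times $\bigl(1+\U z(1+\U^2)\bigr)$ times $\bigl(\V y^2 + \U z(\V+(\V^2-1)y)\bigr)$. The first factor vanishes exactly when $z=-\tfrac{1}{\U(1+\U^2)}$, so this equation is automatic in cases (iii) and (iv); when $z=-\tfrac{1}{\U}$ the second factor collapses to the same quadratic $\V y^2 - (\V^2-1)y - \V = 0$ already imposed by $\tau(h_{\mathtt B})=0$, so it holds in cases (i) and (ii) as well. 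The last and most delicate equation is $\tau(b_1h_1b_1h_2h_1b_1h_{1,2})=0$: here I would substitute the relevant value of $z$ into the degree-three expression of Lemma~\ref{trb1h1b1} and then check, candidate by candidate, that the resulting polynomial in $y$ vanishes at the prescribed root. I expect this substitution-and-cancellation to be the main obstacle, since it is the only genuinely computational step; the cases $y=-\tfrac{1}{\V}$ simplify quickly because the factor $1+\V y$ vanishes, whereas the case $y=\tfrac{-1+\V^2}{(1+\U^2)\V}$ requires expanding and observing a complete cancellation of all terms.

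Finally, because conditions (i)--(iv) are produced as the necessary consequences of $\tau(h_{1,2})=0$ and $\tau(h_{\mathtt B})=0$, and because the verification above shows that the remaining two trace values also vanish at each of them, the four listed pairs $(z,y)$ are exactly the solutions of the system; by Theorem~\ref{equivstat} these are precisely the cases in which $\tau$ factors through $\T(\U,\V)$.
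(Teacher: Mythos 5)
Your proposal is correct and follows essentially the same route as the paper: it invokes Theorem~\ref{equivstat} to reduce the claim to the vanishing of the four traces in the system $(\Sigma)$, and then solves that system via the closed forms in Lemmas~\ref{trh12} and \ref{trb1h1b1}. In fact you supply more detail than the paper's own proof (which only asserts that the solutions ``can be derived''), and your computations check out: $\tau(h_{1,2})=0$ factors to give the two values of $z$, $\tau(h_{\mathtt{B}})=0$ then gives the corresponding $y$'s, and the two remaining equations do vanish at all four candidate pairs, e.g.\ at $y=-1/\V$ the cubic expression of Lemma~\ref{trb1h1b1}(ii) reduces to a multiple of $\tau(h_{1,2})$.
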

\begin{proof}
From Theorem~\ref{equivstat} we have that $\tau$ annihilates the ideal $J$ if and only if the following system of equations has
solutions for $z$ and $y$:
\[
(\Sigma) : = \ \left \{ \begin{aligned}
&{\rm \tau}(g_{\mathtt{B}})=0\\
&{\rm \tau}(g_{1,2})=0\\
&{\rm \tau}(b_1g_1b_1g_{1,2})=0\\
&{\rm \tau}(b_1g_1b_1g_2g_1b_1g_{1,2})=0
\end{aligned}
\right.
\]
Using Lemmas~\ref{trh12} and \ref{trb1h1b1} one can derive the four sets of solutions for $(\Sigma)$ and, therefore, the necessary
and sufficient conditions for the passing of $\tau$ to $\T$.
\end{proof}

\subsection{\it Link invariants from $\T (\U, \V)$} Following Jones \cite{jo}, we can now define
link invariants in the solid torus. Starting from
\eqref{homflyb}, we specialize the parameters $z, y$ to the necessary and sufficient conditions of Theorem~\ref{tracetlb}. Note that
the values $z=-1/ \U$, $y=-1/ \V$ and
$y=-\V$ are discarded since they are of no topological interest \cite[Section~11]{jo}. From the remaining pair of values $z =
-\frac{1}{\U (1 + \U^2)}$ and $y = \frac{\V^2-1}{(1 + \U^2) \V}$ we deduce that $\lambda= \U^4$ and thus we have:
\begin{definition}
The following is an invariant for links inside the solid torus
\begin{equation}\label{vtypeb}
V^{\mathtt{B}}(\U,\V) := \left (-\frac{1+\U^2}{\U} \right)^{n-1} \U^{2\varepsilon(\alpha)} \tau ( \overline\pi (\alpha)),
\end{equation}
where $\alpha$, $\widehat \alpha$, $\varepsilon(\alpha)$ are as in \eqref{homflyb} and  $\overline\pi$ is  the natural epimorphism
$\widetilde{W}_n \rightarrow \T (\U, \V)$ sending $\sigma_i \mapsto g_i$ and $r_1 \mapsto  b_1$.
\end{definition}

\begin{remark} \rm By substituting $\lambda=\U^4$ in \eqref{skein1P} and \eqref{skein2P} we derive that  $V^{\mathtt{B}}$ can be
defined completely by the following skein relations:
\begin{align}
\U^{-2} \ V^{\mathtt{B}} (L_+) - \U^2 \ V^{\mathtt{B}} (L_{-})  &= \left ( \U -\U^{-1} \right) V^{\mathtt{B}} (L_0) \label{skein1V}
\\
V^{\mathtt{B}} ( M_+)  - V^{\mathtt{B}} ( M_-)& = \left ( \V - \V^{-1} \right ) V^{\mathtt{B}} ( M_0), \label{skein2V}
\end{align}
where $L_+$, $L_{-}$, $L_0$, $M_+$ , $M_{-}$ and $M_0$ are as shown in Fig.~\ref{Mis}.
\end{remark}

\section{Framization of the Temperley-Lieb algebra associated to the Coxeter group of type ${\mathtt B}$}\label{secftlb}
In this section we introduce ${\rm FTL}_{d,n}^{\mathtt{B}}$, the framization of the Temperley-Lieb algebra associated to the Coxeter
group of type ${\mathtt B}$. This extends naturally the work done for the type ${\mathtt A}$ case in \cite{gojukola2}. In more
detail, the framization will be defined as a quotient of the algebra $\Y$ modulo an appropriate two-sided ideal. Since the algebra
${\rm Y}_{d,n}(u)$ is contained in $\Y$, following Section~\ref{TLframedtypeA} we consider the following element in $\Y( \U, \V)$:
\begin{align}
&r_{1,2}:=e_1e_2 g_{1,2}.
\end{align}
The element $r_{1,2}$ is the generator of the type $\mathtt{A}$ part of the quotient algebra $\F$. Accordingly, we consider also
the generator of the type $\mathtt{B}$ part, which is the element:
\[
r_{{\mathtt B}}:=f_1e_1g_{\mathtt{B}}
\]

\begin{definition}\rm
The framization of the Temperley-Lieb algebra associated to the Coxeter group of type ${\mathtt B}$ is defined as follows:
\begin{align}
&\F (\U, \V):=\Y (\U , \V)/\langle r_{{\mathtt B}}, r_{1,2}\rangle
\end{align}
\end{definition}

We give below the framed analogues of Lemmas~\ref{multA} and \ref{multB} that will be used extensively later.
\begin{lemma}\label{multfraA}
The following holds in $\Y (\U, \V)$:
\begin{itemize}
  \item[i)] $g_1 r_{1,2}= r_{1,2}g_1=\U r_{1,2}$
  \item[ii)] $g_2 r_{1,2}= r_{1,2}g_2=\U r_{1,2}$
\end{itemize}
\end{lemma}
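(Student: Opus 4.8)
The plan is to reduce the statement to the type $\mathtt{A}$ computation already carried out in Lemma~\ref{multA}, exploiting the fact that the prefactor $e_1e_2$ ``linearises'' the quadratic relation \eqref{quadraticU}. Write $r_{1,2}=e_1e_2P$, where $P:=1+\U(g_1+g_2)+\U^2(g_1g_2+g_2g_1)+\U^3g_1g_2g_1$ is the very polynomial appearing in $h_{1,2}$ but now written in the braiding generators $g_1,g_2$. The first task is to record the behaviour of $e_1e_2$. Using the explicit expression \eqref{eifi} together with the commutativity \eqref{modular2} of the $t_i$, one checks directly that
\[
e_1e_2=\frac{1}{d^2}\sum_{a+b+c\equiv 0} t_1^a t_2^b t_3^c,
\]
i.e.\ $e_1e_2$ is the averaging idempotent over the subgroup $\{(a,b,c)\in(\mathbb{Z}/d\mathbb{Z})^3:a+b+c\equiv 0\}$.

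From this description two facts follow, and these form the technical heart of the argument. First, since $s_1$ and $s_2$ act on the indices $\{1,2,3\}$ by transpositions that permute the three coordinates and hence preserve the condition $a+b+c\equiv 0$, relation \eqref{th} yields $g_i(e_1e_2)=(e_1e_2)g_i$ for $i=1,2$; thus $e_1e_2$ is central with respect to $g_1,g_2$ and in particular commutes with $P$. Second, because $e_1$ and $e_2$ are idempotents absorbed by this averaging (one has $e_1e_2e_1=e_1e_2$ and $e_1e_2e_2=e_1e_2$), the quadratic relation \eqref{quadraticU} collapses, in the presence of $e_1e_2$, to the genuine Hecke relation:
\[
e_1e_2\,g_i^2=e_1e_2\bigl(1+(\U-\U^{-1})e_ig_i\bigr)=e_1e_2\bigl(1+(\U-\U^{-1})g_i\bigr),\qquad i=1,2.
\]
I would isolate these two observations as a short preliminary computation, since everything else rests on them.

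With them in hand, the main computation becomes a transcription of Lemma~\ref{multA}. To evaluate $r_{1,2}g_1=e_1e_2\,Pg_1$ I would expand $Pg_1$ exactly as in the proof of Lemma~\ref{multA}, reducing each occurrence of a square $g_i^2$; because $e_1e_2$ commutes with $g_1,g_2$ it can be slid adjacent to each square before the reduction, so that the effective relation above applies and every reduction produces precisely the term obtained in the type $\mathtt{A}$ case with $h_i$ replaced by $g_i$. The reduction of $h_{1,2}h_1$ to $\U\,h_{1,2}$ therefore gives $e_1e_2\,Pg_1=\U\,e_1e_2P=\U r_{1,2}$. The identity $g_1r_{1,2}=\U r_{1,2}$ follows from the same expansion after first moving $e_1e_2$ past $g_1$ by centrality, and the two statements for $g_2$ follow either by the identical computation or by the symmetry $g_1\leftrightarrow g_2$, under which both $P$ and $e_1e_2$ are invariant.

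The only genuinely delicate point is the pair of observations about $e_1e_2$: that it commutes with $g_1,g_2$ and that it absorbs $e_1,e_2$. Once these are established the framing idempotent $e_1e_2$ plays no further active role --- it merely converts the Yokonuma-type quadratic relation \eqref{quadraticU} into the ordinary Hecke quadratic relation --- and the calculation is word-for-word the one in Lemma~\ref{multA}. I would therefore present the proof as ``by the computation of Lemma~\ref{multA}, after the two reductions above,'' rather than repeating the full expansion.
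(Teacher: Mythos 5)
Your proposal is correct, and it is organized differently from the paper's proof, which simply repeats the full expansion of Lemma~\ref{multA} with the prefactor $e_1e_2$ carried along on the left: each square $g_i^2$ is reduced via \eqref{quadraticU} and the resulting idempotents are absorbed into $e_1e_2$. Your version instead isolates the two structural facts that make this work --- that $e_1e_2$ is the averaging idempotent over $\{(a,b,c):a+b+c\equiv 0\}$ and hence commutes with $g_1,g_2$, and that it absorbs $e_1,e_2$ so that \eqref{quadraticU} collapses to the Hecke relation in its presence --- and then quotes Lemma~\ref{multA} rather than re-expanding. This buys two things. First, economy: the same two observations give all four identities at once (the $g_2$ cases via the $g_1\leftrightarrow g_2$, $t_1\leftrightarrow t_3$ symmetry, under which $e_1e_2$ and the polynomial $P$ are invariant). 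Second, and more substantively, your centrality step repairs a small imprecision in the paper's computation: there, reducing $g_2g_1^2$ and $g_1g_2g_1^2$ produces idempotents in the middle of words (strictly $g_2e_1=e_{1,3}g_2$, not $e_1g_2$), and the paper silently moves them to the front; this is harmless only because $e_1e_2$ absorbs every such conjugate $e_{i,j}$, which is exactly what your sliding argument makes explicit --- $e_1e_2$ is brought adjacent to the square before the quadratic relation is applied, so no stray idempotent ever appears mid-word. The paper's direct computation, in exchange, is self-contained and needs no preliminary lemma.
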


\begin{proof}
The proof follows from a straightforward computation. For demonstrative reasons, we  only prove the case $r_{1,2}g_1$. Observe that
the element $e_1e_2$ commutes with $g_1$ and $g_2$. On the other hand, we also have that
\begin{equation}\label{relA}
  e_ig_i^2=e_i(1+(\U-\U^{-1})e_ig_i)=e_i(1+(\U-\U^{-1})g_i).
\end{equation}

The result follows from (\ref{relA}) and by using Lemma~\ref{multA}.
\end{proof}
We will prove now that an analogous result holds for the generator of the $\mathtt{B}$-type case.
\begin{lemma}\label{multfraB}
In $\Y (\U, \V)$ the following equations holds
\begin{itemize}
  \item[i)] $b_1 r_{{\mathtt B}}= r_{{\mathtt B}}b_1=\V r_{{\mathtt B}}$
  \item[ii)] $g_1 r_{{\mathtt B}}= r_{{\mathtt B}}g_1=\U r_{{\mathtt B}}$
\end{itemize}
  \end{lemma}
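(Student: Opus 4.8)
The plan is to prove Lemma~\ref{multfraB} by a direct computation that closely parallels the proof of Lemma~\ref{multB}, exploiting the fact that $r_{\mathtt B} = f_1 e_1 g_{\mathtt B}$, where $g_{\mathtt B}$ is the obvious framed lift of the element $h_{\mathtt B}$ from Section~\ref{sectlb}. As in the earlier lemmas, it suffices to establish the left-multiplication identities $b_1 r_{\mathtt B} = \V r_{\mathtt B}$ and $g_1 r_{\mathtt B} = \U r_{\mathtt B}$, since the right-multiplication cases are entirely analogous (one multiplies on the other side and uses the same quadratic relations). The key structural observation I would record first is that the idempotents $f_1$ and $e_1$ commute with $b_1$ and $g_1$ in the relevant way: by \eqref{tb1} the generator $b_1$ commutes with every $t_i$, hence with both $f_1$ and $e_1$; and by \eqref{th} conjugation of $e_1 = e_{1,2}$ by $g_1$ fixes $e_1$ (since $\s_1$ swaps the indices $1,2$ symmetrically in $e_{1,2}$), and likewise $f_1$ interacts with $g_1$ in a controlled manner. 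This lets me pull the idempotent prefactor $f_1 e_1$ to the outside and reduce the problem to computing $b_1 g_{\mathtt B}$ and $g_1 g_{\mathtt B}$.

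Second, I would carry out the computation of $g_1 g_{\mathtt B}$ and $b_1 g_{\mathtt B}$ almost verbatim from the proof of Lemma~\ref{multB}, with a single essential modification: the quadratic relations in $\Y(\U,\V)$ are \eqref{quadraticU} and \eqref{quadraticV}, namely $g_i^2 = 1 + (\U - \U^{-1}) e_i g_i$ and $b_1^2 = 1 + (\V - \V^{-1}) f_1 b_1$, which carry the idempotent factors $e_i$ and $f_1$ that are absent in the type-$\mathtt B$ Hecke algebra. The plan is therefore to expand $g_1 g_{\mathtt B}$ (respectively $b_1 g_{\mathtt B}$) termwise exactly as in Lemma~\ref{multB}, and every time a square $g_1^2$ or $b_1^2$ appears, substitute the framed quadratic relation. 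The extra idempotents $e_1$ (resp.\ $f_1$) produced this way are then absorbed: once the whole expression is multiplied on the left by the prefactor $f_1 e_1$, and since $f_1^2 = f_1$ and $e_1^2 = e_1$ (the elements are idempotent, as noted after \eqref{quadraticV}), these stray idempotents collapse and the algebra becomes formally identical to the Hecke-algebra computation in Lemma~\ref{multB}.

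Concretely, for part (ii) I would write
\[
g_1 r_{\mathtt B} = f_1 e_1\, g_1 g_{\mathtt B},
\]
expand $g_1 g_{\mathtt B}$ using \eqref{quadraticU} wherever $g_1^2$ occurs, and observe that each term of the form $e_1 g_1 (\cdots)$ is, after premultiplication by $f_1 e_1$ and using $e_1^2 = e_1$, exactly the term $h_1(\cdots)$ of the Hecke computation. The cancellations among the $+e_1g_1$ and $-g_1$ contributions mirror those in Lemma~\ref{multB} line for line, so the sum telescopes to $\U\, f_1 e_1 g_{\mathtt B} = \U r_{\mathtt B}$. Part (i) proceeds identically with $b_1^2$ handled by \eqref{quadraticV} and $f_1^2 = f_1$, yielding $\V r_{\mathtt B}$.

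The main obstacle I anticipate is purely bookkeeping rather than conceptual: I must track the idempotent factors $e_1, f_1$ carefully through each application of the quadratic relations, and verify that conjugating $f_1 e_1$ past $g_1$ and $b_1$ really does leave the prefactor invariant (this rests on $b_1$ commuting with all $t_i$ via \eqref{tb1}, and on $g_1$ commuting with $e_1 = e_{1,2}$ because $\s_1$ acts symmetrically on the index pair). Once these commutation and idempotency facts are confirmed, the remainder is a routine expansion identical in structure to Lemma~\ref{multB}, which the authors summarize as a ``long straightforward computation.''
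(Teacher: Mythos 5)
Your proposal is correct and follows essentially the same route as the paper: reduce to the left-multiplication identities, pull the idempotent prefactor ($f_1e_1 = f_1f_2$) outside using relations \eqref{tb1} and \eqref{th}, expand $b_1 g_{\mathtt B}$ and $g_1 g_{\mathtt B}$ with the framed quadratic relations \eqref{quadraticU}--\eqref{quadraticV}, and absorb the stray idempotents into the prefactor so that the cancellation reproduces Lemma~\ref{multB} line for line. The only detail worth making explicit is that $g_1$ commutes with the \emph{product} $f_1e_1$ (via $g_1f_1 = f_2g_1$ and $f_2e_1 = f_1e_1$), not with $f_1$ alone, but you flag exactly this verification as part of your plan.
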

\begin{proof}
 We only prove the left multiplication for the first case. The proof for the second case is analogous.  Similarly to the previous
 case, we have that the element $f_1f_2$ commutes with $b_1$ and $g_1$. Note now that the following equation holds in $\Y (\U, \V)$:
  $$  f_1b_1^2=f_1(1+(\V-\V^{-1})f_1b_1)=f_1(1+(\V-\V^{-1})b_1).$$
The result follows by using Lemma~\ref{multB}.
\end{proof}

\subsection{{\it Technical lemmas}}
Our next goal is to determine the necessary and sufficient conditions so that the trace ${\rm Tr}$ of $\Y (\U,
\V)$ in \cite{fjl} passes to  ${\rm FTL}^{\mathtt{B}}_{d,n}$. Our approach will be analogous to
\cite{gojukola2}. However, we need to postpone this discussion until the next subsection in order to present here a series of
technical results that  are required for the proof of our main theorem.
\begin{lemma}\label{r12gbtr}The following holds in ${\rm Y}_{d,n}^{\mathtt{B}}(\U,\V)$.
\begin{align}
1. &\ {\rm Tr}(r_{\mathtt{B}}) =
\frac{1}{d^2}\sum_{r,s}x_rx_s+\U^2\V^2\frac{1}{d^2}\sum_{r,s}y_ry_s+\V(\U^2+1)\frac{1}{d^2}\sum_{r,s}x_sy_r \label{trrb}.\\
 &\hspace{2.5cm}+ z\U[1+\U^2\V^2]\frac{1}{d}\sum_r x_r+ z[\U^3\V^3 + \U \V]\frac{1}{d}\sum_r y_r \nonumber\\
 2.& \ {\rm Tr}({r_{1,2}})= (u+1)z^2x_{m} + (u+2)z \, E^{(m)}  +{\rm tr}(e_1^{(m)}e_2) \label{trr12}.
\end{align}
\end{lemma}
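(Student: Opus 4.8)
The plan is to reduce both identities to evaluating the Markov trace ${\rm Tr}$ on individual framed monomials, exploiting that each generator is an idempotent times a sum over a small dihedral group. For \eqref{trrb} I would write $r_{\mathtt{B}}=f_1f_2\,g_{\mathtt{B}}$ as the sum of its eight dihedral words $1,\ \U g_1,\ \V b_1,\ \U\V g_1b_1,\ \U\V b_1g_1,\ \U^2\V g_1b_1g_1,\ \U\V^2 b_1g_1b_1,\ (\U\V)^2 g_1b_1g_1b_1$, each premultiplied by $f_1f_2=\frac{1}{d^2}\sum_{k,l}t_1^kt_2^l$; this element lives in ${\rm Y}_{d,2}^{\mathtt{B}}$, so we must compute ${\rm Tr}_2$. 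Likewise for \eqref{trr12} I would expand $r_{1,2}=e_1e_2\bigl(1+\U(g_1+g_2)+\U^2(g_1g_2+g_2g_1)+\U^3g_1g_2g_1\bigr)$, which lives in ${\rm Y}_{d,3}^{\mathtt{B}}$. By linearity of ${\rm Tr}$ it then suffices to evaluate the trace on each resulting monomial, and throughout the recurrent simplification will be the combinatorial identity $\sum_{k,l=0}^{d-1}x_{k+l}=d\sum_r x_r$ (and its analogue for $y$), which turns the double sums coming from the framing idempotents into the single sums on the right-hand sides, together with $f_i^2=f_i$.

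For \eqref{trr12} the generator $r_{1,2}$ involves only the framing and braiding generators $t_i,g_i$ and not the looping generator $b_1$, so only rules (i), (ii), (iv), (v) of Theorem~\ref{trace} intervene and the computation runs exactly parallel to the type $\mathtt{A}$ case treated in \cite{gojukola2}. For each word I would push the framing exponents to the top using \eqref{th}, reorder cyclically via property (v), and apply the Markov property (ii), first for $g_2$ (tracing out the third strand) and then for $g_1$. The word $1$ leaves the constant ${\rm tr}(e_1^{(m)}e_2)$; the single-braid words $g_1,g_2$ produce the linear-in-$z$ contribution $(u+2)zE^{(m)}$; and the length-two words together with the longest word $g_1g_2g_1$ produce the $z^2$ contribution $(u+1)z^2x_m$ once the braid relation \eqref{braid2} and the quadratic relation \eqref{quadraticU}, $g_i^2=1+(\U-\U^{-1})e_ig_i$, are used to absorb the square. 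Collecting the three groups gives the right-hand side of \eqref{trr12}.

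The identity \eqref{trrb} is the genuinely new part, as it engages the looping generator, and I would organise the eight words by the number of occurrences of $b_1$. The two $b_1$-free words $1$ and $\U g_1$ give $\frac{1}{d^2}\sum_{r,s}x_rx_s$ and the $\U$-part of the $z$-term in the $x$-variables, using that $f_1f_2$ is symmetric in the indices $1,2$ and hence commutes with $g_1$ by \eqref{th}. For each of the four words containing a single $b_1$ I would repeatedly apply $t_jg_i=g_it_{s_i(j)}$, $t_ib_1=b_1t_i$ and property (v) to bring the monomial either into a form whose trace factors as a product of an $x$-value and a $y$-value (as for $\V b_1$, feeding the $\V(\U^2+1)\frac{1}{d^2}\sum_{r,s}x_sy_r$ term) or into the form $t_1^{a}b_1g_1$, whose trace is $z$ times a $y$-value by rule (ii) and feeds the $z$-term in the $y$-variables; the word $\U^2\V g_1b_1g_1$ splits between both after its $g_1^2$ is reduced by \eqref{quadraticU}. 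Finally, for the two words with two occurrences of $b_1$, namely $\U\V^2 b_1g_1b_1$ and $(\U\V)^2 g_1b_1g_1b_1$, I would first remove the square using $b_1^2=1+(\V-\V^{-1})f_1b_1$ from \eqref{quadraticV} (and the $g_1^2$ reduction \eqref{quadraticU} in the last word) and then reduce as before; the leading part of the longest word is precisely what yields $\U^2\V^2\frac{1}{d^2}\sum_{r,s}y_ry_s$.

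The main obstacle I anticipate lies in the three words $g_1b_1g_1$, $b_1g_1b_1$ and $g_1b_1g_1b_1$: applying the quadratic relations \eqref{quadraticU} and \eqref{quadraticV} there spawns additional idempotents $e_1$ and $f_1$ which must be reconciled with the framing idempotent $f_1f_2$ (using $f_1^2=f_1$ and the interaction of $f_1$ with $e_1$) before the trace rules can be applied, and the resulting fragments, carrying factors $(\U-\U^{-1})$ and $(\V-\V^{-1})$, must be shown to combine into exactly the stated coefficients $\U(1+\U^2\V^2)$ and $\U^3\V^3+\U\V$ of the two $z$-sums. This cancellation is the delicate point, but once the squares are eliminated the remaining steps are mechanical applications of the trace rules.
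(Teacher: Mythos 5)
Your proposal is correct and takes essentially the same route as the paper: part 1 is proved there by exactly the direct expansion you describe (the eight words of $r_{\mathtt{B}}$, trace rules, quadratic relations to remove $g_1^2$ and $b_1^2$, and collapsing the framing double sums, with the $(\U-\U^{-1})$ and $(\V-\V^{-1})$ fragments recombining into $\U(1+\U^2\V^2)$ and $\U^3\V^3+\U\V$), while for part 2 the paper simply cites \cite[Lemma~7]{gojukola2}, i.e.\ the parallel type-$\mathtt{A}$ computation you propose to reproduce. One small bookkeeping point your sketch would self-correct when carried out: the longest word $g_1g_2g_1$ contributes to \emph{both} the $z^2$-term and the $z$-linear term (its quadratic correction yields an extra $zE^{(m)}$ piece, which is where the ``$u$'' in the coefficient $(u+2)$ comes from), so the attribution of $(u+2)zE^{(m)}$ to the single-braid words alone is not exact.
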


\begin{proof}
For the first argument we have that:

\begin{align*}
   {\rm Tr}(r_{\mathtt{B}})=&\frac{1}{d^2}\sum_{r,s}x_rx_s+\V\frac{1}{d^2}\sum_{r,s}x_sy_r+z\U\frac{1}{d}\sum_r x_r +2z\U\V
   \frac{1}{d}\sum_r y_r +z\V^2\U\frac{1}{d}\sum_r x_r\\
   & +z\V^2\U(\V- \V^{-1})\frac{1}{d}\sum_r y_r+\U^2\V\frac{1}{d^2}\sum_{r,s}x_ry_s+ z\U^2\V(\U -\U^{-1})\frac{1}{d}\sum_r y_r\\
   &+\U^2\V^2\frac{1}{d^2}\sum_{r,s}y_ry_s+ z\U^2\V^2(\U- \U^{-1})\frac{1}{d}\sum_r x_r +z\U^2\V^2(\U- \U^{-1})(\V-
   \V^{-1})\frac{1}{d}\sum_r y_r\\
   =& \frac{1}{d^2}\sum_{r,s}x_rx_s+\U^2\V^2\frac{1}{d^2}\sum_{r,s}y_ry_s+(\U^2\V+\V)\frac{1}{d^2}\sum_{r,s}x_sy_r+ \\
   & z \left [\U+\V^2\U+\V^2\U^2(\U- \U^{-1}) \right ]\frac{1}{d}\sum_r x_r+z \left [2\U\V+\U^2\V(\U-\U^{-1})+ \U \V^2 ( \V -
   \V^{-1}) \right.\\
   &+ \left. \U^2\V^2(\U- \U^{-1})(\V- \V^{-1})\right ] \frac{1}{d}\sum_r y_r. \\
   =&\frac{1}{d^2}\sum_{r,s}x_rx_s+\U^2\V^2\frac{1}{d^2}\sum_{r,s}y_ry_s+\V(\U^2+1)\frac{1}{d^2}\sum_{r,s}x_sy_r+
   z\U[1+\U^2\V^2]\frac{1}{d}\sum_r x_r\\
   & +z[\U^3\V^3 + \U \V]\frac{1}{d}\sum_r y_r.
\end{align*}

For the second part of the proof the reader is referred to \cite[Lemma~7]{gojukola2}.
\end{proof}

The following two propositions show how ${\rm Tr}$ behaves on the elements of the defining ideal of  ${\rm
FTL}^{\mathtt{B}}(\U, \V)$. We start by exploring the case of the elements that involve the $\mathtt{B}$-type part of the algebra.

\begin{proposition}\label{propframetypeb}
  For all $m\in \Y$ we have that

 $${\rm Tr} (mr_{\mathtt{B}})= p(\U,\V,z,y_a,x_b){\rm Tr}(r_{\mathtt{B}}),\quad \text{for all $n\geq 2$,}$$
where $p(\U,\V,z,y_a,x_b)$ is a monomial in the variables $\U,\V,z, y_a$ and $x_b$, with $0\leq a,b\leq d-1$.
\end{proposition}

\begin{proof}
  By the linearity of the trace, it is enough to prove the statement for an element $m$ in the inductive basis $\mathsf{C}_n$. We will
  proceed by induction. For $n=2$ the result follows from Lemma \ref{multfraB}, and the fact that element $f_1f_2$ absorbs the
  framing part of $m$. For instance, if $m=t_1^at_2^bb_1g_1b_1$ we have
  $${\rm Tr} (mr_{\mathtt{B}})={\rm Tr} (t_1^at_2^bb_1g_1b_1f_1f_2g_{\mathtt{B}})=\V^2\U{\rm Tr}
  (t_1^at_2^bf_1f_2g_{\mathtt{B}})=\V^2\U {\rm Tr} (f_1f_2g_{\mathtt{B}}).$$

Suppose now that the argument holds for any $w\in \Z$, and let $m\in \Y$. Then, the element $m$ can be written as follows:
    $$m=wt_n^bb_{n}^a\quad \text{or} \quad  m=wg_{n-1}\dots g_ib_i^at_i^b,$$
  with $w\in \Z$, $a=0,1$ and $0\leq b\leq d-1$ . Since we have that
  \begin{eqnarray*}
    {\rm Tr}(wt_n^ab_{n}) &=& y_a{\rm Tr}(w) \\
    {\rm Tr}(wt_n^a) &=& x_a{\rm Tr}(w) \\
    {\rm Tr}(wg_{n-1}\dots g_ib_i^at_i^b)  &=& z{\rm Tr}(\alpha),\quad \text{where $\alpha=wg_{n-2}\dots g_ib_i^at_i^b \in {\rm
    H}_n(\U,\V)$}
  \end{eqnarray*}
  the result follows by the induction hypothesis.
\end{proof}

The next proposition deals with the $\mathtt{A}$-type part of the algebra.

\begin{proposition}\label{propframedtypeA}
  Let $n\geq 3$. For all $m\in \Y$ we have that
\begin{equation}\label{relationframedtypea}
  {\rm Tr}(mr_{1,2})= \left\{\begin{array}{l}
                    p(\U,\V,z,y_a, x_b){\rm Tr}(t_1^at_2^bt_3^c r_{1,2})\\
                    p(\U,\V,z,y_a, x_b){\rm Tr}(t_1^at_2^bt_3^cb_1r_{1,2})   \\
                    p(\U,\V,z,y_a, x_b){\rm Tr}(t_1^at_2^bt_3^cb_1g_1b_1r_{1,2})   \\
                    p(\U,\V,z,y_a, x_b){\rm Tr}(t_1^at_2^bt_3^cb_1g_1b_1g_2g_1b_1r_{1,2})
                  \end{array}\right.
  \end{equation}
where $p(\U,\V,z,y_a,x_b)$ is a monomial in the variables $\U,\V,z, y_a$ and $x_b$, with $0\leq a,b\leq d-1$.
\end{proposition}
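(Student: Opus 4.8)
The plan is to mirror exactly the structure of Proposition~\ref{proptypeA}, which handled the type $\mathtt{A}$ trace $\tau$, but now carried out at the framed level for ${\rm Tr}$. By linearity of ${\rm Tr}$ it suffices to establish \eqref{relationframedtypea} for an arbitrary basis element $m$ of the inductive basis $\mathsf{C}_n$ of $\Y$. As in the unframed case, the argument proceeds by induction on $n$, with the base case $n=3$ done by explicit reduction and the inductive step handled by the Markov rules (ii)--(iv) of Theorem~\ref{trace}.

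For the base case $n=3$, I would first record the reductions available in $M_{d,2}$. Writing a general element of $M_{d,1}M_{d,2}$ in the normal form $t_1^{a}t_2^{b}\,w$ with $w$ a product of $b_1$'s and $g_1$'s, one uses Lemma~\ref{multfraA}(i) to collapse the factor $g_1$ whenever it meets $r_{1,2}$ (each such contact contributes a factor $\U$), exactly as Lemma~\ref{multA} was used in Proposition~\ref{proptypeA}. The framing generators $t_i$ simply commute past or get absorbed into the idempotent factor $e_1e_2$ sitting inside $r_{1,2}$, via relations \eqref{th} and the definition of the $e_i$ in \eqref{eifi}; this is what produces the surviving powers $t_1^{a}t_2^{b}t_3^{c}$ on the right-hand side of \eqref{relationframedtypea}. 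An element of $M_{d,3}$ has the form $t_3^{m}$, $t_3^{m}b_3$, or $g_2 x$ with $x\in M_{d,2}$, so writing $m=w\,\mathfrak{m}_3$ I would treat each shape of $\mathfrak{m}_3$ in turn. The cases $\mathfrak{m}_3=t_3^{m}$, $t_3^{m}b_3$, and those $g_2\dots$ elements that slide past $r_{1,2}$ reduce by the trace rules to a scalar times ${\rm Tr}(w\,r_{1,2})$ with $w\in M_{d,1}M_{d,2}$; those remaining reduce to the three irreducible templates $b_1 r_{1,2}$, $b_1g_1b_1 r_{1,2}$, and $b_1g_1b_1g_2g_1b_1 r_{1,2}$ (decorated by framing powers), precisely the four right-hand sides listed, matching the three nontrivial lines of Proposition~\ref{proptypeA} plus the new $t_1^at_2^bt_3^c r_{1,2}$ line forced by the framings.

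For the inductive step, assume the claim for $\Y=\Z$ and take $m\in\Y$. By the description of $\mathsf{C}_n$ the element $m$ has one of the forms $w\,t_{n}^{b}b_{n}^{a}$ or $w\,g_{n-1}\cdots g_{i}b_{i}^{a}t_{i}^{b}$ with $w\in\Z$. Applying rule (iii) gives ${\rm Tr}(w\,t_n^{b}b_n)=y_b\,{\rm Tr}(w)$, rule (iv) gives ${\rm Tr}(w\,t_n^{b})=x_b\,{\rm Tr}(w)$, and rule (ii) peels off the trailing $g_{n-1}$ to yield ${\rm Tr}(w\,g_{n-1}\cdots g_i b_i^a t_i^b)=z\,{\rm Tr}(\alpha)$ with $\alpha=w\,g_{n-2}\cdots g_i b_i^a t_i^b\in\Z$. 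In each instance the element $r_{1,2}$ only involves indices $1,2$, hence commutes with the peeled-off terminal generators and the induction hypothesis applies to the residual element in $\Z$, producing the claimed monomial coefficient.

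The main obstacle is not conceptual but bookkeeping: one must verify carefully that in the $n=3$ base case \emph{every} product $t_1^at_2^b\,w\cdot g_2\,x\cdot r_{1,2}$ reduces, after using Lemma~\ref{multfraA} and the commutation/absorption relations \eqref{braid1}--\eqref{tb1}, to a scalar times one of the four listed templates, with no extra irreducible type surviving. The delicate point is controlling how the framing idempotents $e_1,e_2$ inside $r_{1,2}$ interact with the framing powers $t_i^\bullet$ carried by the basis monomials --- unlike the unframed case, a factor $t_i^m$ cannot simply be dropped but is reabsorbed through \eqref{eifi}, which is exactly why the first template $t_1^at_2^bt_3^c r_{1,2}$ must be retained. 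Since each reduction step only multiplies by a monomial in $\U,\V,z,y_a,x_b$, the accumulated coefficient $p$ is again a monomial, as asserted.
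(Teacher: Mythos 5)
Your proposal is correct and follows essentially the same route as the paper: linearity to reduce to the inductive basis, induction on $n$ with the base case $n=3$ settled by Lemma~\ref{multfraA} together with absorption of the framing powers into the idempotents, and the inductive step by peeling off terminal generators via the Markov rules of Theorem~\ref{trace}. The only differences are cosmetic: the paper carries out explicitly the $n=3$ reductions that you defer to bookkeeping, and the template that is genuinely new compared with Proposition~\ref{proptypeA} is $t_1^at_2^bt_3^cb_1r_{1,2}$ (since rule (iii) of the framed trace cannot pull $b_1$ out as a scalar once the idempotents are present), rather than $t_1^at_2^bt_3^cr_{1,2}$, which is just the framed decoration of the old $h_{1,2}$ template.
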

\begin{proof}
The trace ${\rm Tr}$ is linear so it suffices to prove (\ref{relationframedtypea}) for any $m$ in the basis $\mathsf{C}_n$  of
$\Y$. We will use again induction on $n$. We start by proving that the argument holds
for $n=3$. Note that from Lemmas~\ref{multfraB} and \ref{multfraA} we have:

 \begin{align*}
    &{\rm Tr}(t_1^at_2^bt_3^cb_2r_{1,2}) =  {\rm Tr}(t_1^at_2^bt_3^cb_1r_{1,2})  ;\quad  {\rm Tr}(t_1^at_2^bt_3^cb_1b_2r_{1,2}) =
    \U^{-1}{\rm Tr}(t_1^at_2^bt_3^cb_1g_1b_1r_{1,2}) \\
    & {\rm Tr}(t_1^at_2^bt_3^cg_1b_1r_{1,2}) =  \U y {\rm Tr}(t_1^at_2^bt_3^cb_1r_{1,2}) ;\quad  {\rm
    Tr}(t_1^at_2^bt_3^cb_1g_1r_{1,2})  = \U y {\rm Tr}(t_1^at_2^bt_3^cb_1r_{1,2})  \\
    & {\rm Tr}(t_1^at_2^bt_3^cg_1r_{1,2})  =  \U {\rm Tr}(t_1^at_2^bt_3^cr_{1,2})
  \end{align*}

Next, suppose that $m\in \mathsf{C}_3$. This means that $m=t_1^at_2^bt_3^cwm_3$ for some $w\in \mathtt{C}_2$ and $m_3\in
\{g_2g_1b_1, g_2b_2, b_3, g_2g_1, g_2, 1\}$. From the previous results and from Lemma~\ref{multfraA} we obtain the following:

\begin{align*}
  &{\rm Tr}(t_1^at_2^bt_3^cwg_2b_2r_{1,2})=\U^{-1}{\rm Tr}(t_1^at_2^bt_3^cwg_2g_1b_1r_{1,2})\\
  &{\rm Tr}(t_1^at_2^bt_3^cwg_2g_1r_{1,2})=\U^2 {\rm Tr}(t_1^at_2^bt_3^cwr_{1,2})\\
  & {\rm Tr}(t_1^at_2^bt_3^cwb_3r_{1,2})=\U^{-2}{\rm Tr}(t_1^at_2^bt_3^cwg_2g_1b_1r_{1,2})\\
  & {\rm Tr}(t_1^at_2^bt_3^cwg_2r_{1,2})=\U {\rm Tr}(t_1^at_2^bt_3^cwr_{1,2})
\end{align*}

Therefore, we only have to study  the term ${\rm Tr}(t_1^at_2^bt_3^cwg_2g_1b_1r_{1,2})$. Replacing $w$ for  each element in
$\mathtt{C}_2$ and by using previous lemmas and results, we obtain the following:

\begin{align*}
 &{\rm Tr}(t_1^at_2^bt_3^cg_2g_1b_1r_{1,2})= \U^2 y {\rm Tr}(t_1^at_2^bt_3^cb_1r_{1,2})\\
 & {\rm Tr}(t_1^at_2^bt_3^cb_2g_2g_1b_1r_{1,2})  =  \U {\rm Tr}(t_1^at_2^bt_3^cb_1g_1b_1r_{1,2})\\
 &{\rm Tr}(t_1^at_2^bt_3^cg_1g_2g_1b_1r_{1,2})  = \U^3 y {\rm Tr}(t_1^at_2^bt_3^cb_1r_{1,2})\\
 &{\rm Tr}(t_1^at_2^bt_3^cg_1b_1 g_2g_1b_1r_{1,2})  = \U^2 {\rm Tr}(t_1^at_2^bt_3^cb_1g_1b_1r_{1,2}) \\
 &{\rm Tr}(t_1^at_2^bt_3^cb_1g_2g_1b_1r_{1,2})  = \U {\rm Tr}(t_1^at_2^bt_3^cb_1g_1b_1r_{1,2})\\
 &{\rm Tr}(t_1^at_2^bt_3^cb_1g_1g_2g_1b_1r_{1,2}) =  \U^2 {\rm Tr}(t_1^at_2^bt_3^cb_1g_1b_1r_{1,2})\\
 &{\rm Tr}(t_1^at_2^bt_3^cb_1b_2g_2g_1b_1r_{1,2}) = \U^{-1}{\rm Tr}(t_1^at_2^bt_3^cb_1g_1b_1g_2g_1b_1r_{1,2})
\end{align*}

The result for $n=3$ follows immediately. Finally, we suppose that the argument holds for $w\in \mathtt{C}_{n-1}$, and let $m\in
\mathtt{C}_n$. We have that $m^{(a)}=t_1^{a_1}\dots t_n^{a_n}wb_{n}^a$ or $m^\prime=t_1^{a_1}\dots t_n^{a_n}wg_{n-1}\dots
g_ib_i^a$, with $w\in \Z$, $a=0,1$ and $0\leq a_1,\dots, a_n \leq d-1$. Since we have that
  \begin{eqnarray*}
    {\rm Tr}(m^{(0)})= {\rm Tr}(t_1^{a_1}\dots t_n^{a_n}w) &=& x_{a_n}{\rm Tr}(w) \\
   {\rm Tr}(m^{(1)})= {\rm Tr}(t_1^{a_1}\dots t_n^{a_n}wb_{n}) &=& y_{a_n}{\rm Tr}(w) \\
   {\rm Tr}(m^\prime)= {\rm Tr}(t_1^{a_1}\dots t_n^{a_n}wg_{n-1}\dots g_ib_i^a)  &=& z{\rm Tr}(\alpha),
     \end{eqnarray*}
where $\alpha=t_1^{a_1}\dots t_{n-1}^{a_{n-1}}wg_{n-2}\dots g_it_i^{a_n}b_i^a \in \Z$,  the result follows by the induction
hypothesis.
\end{proof}

From the above proposition it is clear that it would be useful to compute the traces of the following elements:
$t_1^at_2^bt_3^cb_1r_{1,2}$, $ t_1^at_2^bt_3^cb_1g_1b_1r_{1,2}$ and $t_1^at_2^bt_3^cb_1g_1b_1g_2g_1b_1r_{1,2}$. We shall treat each
case as a separate lemma. For the first term we have:
\[
t_1^at_2^bt_3^cb_1r_{1,2} = t_1^at_2^bt_3^cb_1e_1 e_2 g_{1,2} = e_1^{(m)} e_2 b_1 g_{1,2}.
\]

\begin{lemma}\label{traceb1r12}
The following holds in  ${\rm Y}^{\mathtt{B}}_{d,n}(\U, \V)$:
\[
{\rm Tr}(e_1^{(m)} e_2 b_1 g_{1,2}) =   \frac{1}{d^2} \sum_{s,r=0}^{d-1}x_{-r} x_{-s+r} y_{m+s} + (\U^2+2) \frac{\U z}{d}
\sum_{r=0}^{d-1} x_{-r} y_{m+r}  + (\U^2+1) \U^2 z^2 y_m.
\]
\end{lemma}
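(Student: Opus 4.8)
The plan is to expand the bracket $g_{1,2}=1+\U(g_1+g_2)+\U^2(g_1g_2+g_2g_1)+\U^3g_1g_2g_1$ and, by linearity of ${\rm Tr}$, reduce the statement to computing ${\rm Tr}(e_1^{(m)}e_2b_1w)$ separately for each of the six words $w\in\{1,\,g_1,\,g_2,\,g_1g_2,\,g_2g_1,\,g_1g_2g_1\}$. Since the element involves only generators of index at most $3$, it lies in ${\rm Y}_{d,3}^{\mathtt B}\subseteq\Y$ and ${\rm Tr}$ agrees with ${\rm Tr}_3$ on it (the higher relative traces contribute only the factor $x_0=1$), so all work is carried out in ${\rm Y}_{d,3}^{\mathtt B}$. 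Before starting I would record a small toolkit of identities: the framing--absorption rules $t_ie_i=t_{i+1}e_i$ (whence $t_1^me_1=e_1^{(m)}$ and $t_2^rt_1^{-r}e_1=e_1$), the idempotency $e_1^{(m)}e_1=e_1^{(m)}$, the facts that $e_i$ commutes with $g_i$, that $e_1^{(m)}$ commutes with $g_1$, and that $b_1$ commutes with every $t_i$, with $e_1^{(m)}$ and $e_2$, and with $g_2$; finally the braiding--framing rule $t_{i+1}^rg_i=g_it_i^r$, which lets me push the $t_3^{-r}$ coming from $e_2=\frac1d\sum_r t_2^rt_3^{-r}$ through $g_2$ via $t_3^{-r}g_2=g_2t_2^{-r}$. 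The engine throughout is the Markov rules (ii)--(iv) together with the conjugation property (v) of Theorem~\ref{trace}.

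I would then treat the words in increasing order of difficulty. For $w=1$ the computation is purely in the framing part: one peels off $t_3$, then $t_2$, then the index-$1$ factor $b_1t_1^{\,\cdot}$ using (iv), (iv) and (iii), producing the double sum $\frac1{d^2}\sum_{s,r}x_{-r}x_{-s+r}y_{m+s}$. For $w=g_1$ and $w=g_2$ one removes $t_3$, arranges the expression into the form $(\text{element of }{\rm Y}_{d,2}^{\mathtt B})\,g_1$ or $(\cdot)\,g_2$ by cyclicity and the absorption identities, applies the Markov rule (ii), and finishes the resulting ${\rm Y}_{d,2}^{\mathtt B}$-trace; both give $\frac{z}{d}\sum_r x_{-r}y_{m+r}$. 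For $w=g_1g_2$ and $w=g_2g_1$ the same idea works: the top generator $g_2$ is peeled off by (ii) after using $t_3^{-r}g_2=g_2t_2^{-r}$ and cyclically cancelling the conjugating framings, and the remaining ${\rm Y}_{d,2}^{\mathtt B}$-trace collapses, via a second use of (ii) and then (iii), to $z^2y_m$ in each case.

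The main obstacle is $w=g_1g_2g_1$, because the non-commuting pair $b_1,g_1$ forces the appearance of $g_1b_1g_1$ once $g_2$ has been removed. My plan here is: peel off $g_2$ as above to land on $\frac{z}{d}\sum_r{\rm Tr}_2(e_1^{(m)}t_1^rt_2^{-r}g_1b_1g_1)$; then use cyclicity together with $g_1t_1^r=t_2^rg_1$ and $g_1t_2^{-r}=t_1^{-r}g_1$ to slide the conjugating framings past and collect the two $g_1$'s into $g_1^2$, reaching ${\rm Tr}_2(e_1^{(m)}t_2^rt_1^{-r}g_1^2b_1)$. Substituting the quadratic relation $g_1^2=1+(\U-\U^{-1})e_1g_1$ splits this into two pieces: the $1$-piece is another framing sum evaluating to $\frac1d\sum_p x_{-p}y_{m+p}$, while in the $e_1g_1$-piece the identities $t_2^rt_1^{-r}e_1=e_1$ and $e_1^{(m)}e_1=e_1^{(m)}$ collapse all framings and, after one cyclic move, reduce it to $(\U-\U^{-1}){\rm Tr}_2(e_1^{(m)}b_1g_1)=(\U-\U^{-1})zy_m$, a value already obtained in the $g_1$ case. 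Thus $w=g_1g_2g_1$ contributes $\frac{z}{d}\sum_p x_{-p}y_{m+p}+(\U-\U^{-1})z^2y_m$.

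Finally I would assemble the six contributions with their coefficients $1,\U,\U,\U^2,\U^2,\U^3$. The single braiding generators and the $\U^3$-multiple of the framing-sum part of the last term combine as $(\U+\U+\U^3)\frac{z}{d}\sum_r x_{-r}y_{m+r}=(\U^2+2)\frac{\U z}{d}\sum_r x_{-r}y_{m+r}$, while the two quadratic words and the $\U^3(\U-\U^{-1})$-multiple of the last term combine as $(\U^2+\U^2+\U^4-\U^2)z^2y_m=(\U^2+1)\U^2z^2y_m$; together with the $w=1$ double sum this is exactly the asserted formula. The only genuinely delicate point is the $g_1g_2g_1$ reduction, where recognizing that cyclicity plus the framing--braiding relations turn $g_1b_1g_1$ into $g_1^2b_1$ is what makes the quadratic relation usable and generates the $(\U-\U^{-1})$ factor responsible for the coefficient $(\U^2+1)$.
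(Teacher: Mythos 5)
Your proof is correct and takes essentially the same route as the paper's: expand $g_{1,2}$ by linearity into the six words, evaluate each trace using the Markov rules (ii)--(iv) and conjugation, and assemble; all six of your values agree with the paper's, including the delicate $g_1g_2g_1$ term, which you write as $\frac{z}{d}\sum_p x_{-p}y_{m+p}+(\U-\U^{-1})z^2y_m$ and the paper writes in the equivalent reindexed form $\frac{z}{d^2}\sum_{s,r}x_{m+s-r}y_{-s+r}+(\U-\U^{-1})z^2y_m$. The paper simply states the six trace values without intermediate steps, so your write-up supplies the missing details (quadratic relation, absorption identities, cyclicity) but introduces no genuinely different idea.
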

\begin{proof}
We start by expanding the term $g_{1,2}$.

\begin{align*}
{\rm Tr}(e_1^{(m)} e_2 b_1 g_{1,2}) =& {\rm Tr}(e_1^{(m)} e_2 b_1) + \U \left( {\rm Tr}(e_1^{(m)} e_2 b_1g_1) + {\rm Tr}(e_1^{(m)}
e_2 b_1 g_2)\right) \\
&+ \U^2 \left ( {\rm Tr}(e_1^{(m)} e_2 b_1g_1 g_2) + {\rm Tr}(e_1^{(m)} e_2 b_1 g_2 g_1) \right ) +\U^3 {\rm Tr}(e_1^{(m)} e_2 b_1g_1
g_2 g_1)\\
=& \frac{1}{d^2} \sum_{s,r=0}^{d-1}x_{-r} x_{-s+r} y_{m+s} + \U \frac{2z}{d} \sum_{r=0}^{d-1} x_{-r} y_{m+r} + 2 \U^2 z^2 y_m \\
&+\U^3 \frac{z}{d^2} \sum_{s,r=0}^{d-1} x_{m+s-r} y_{-s+r} + \U^3 ( \U- \U^{-1}) z^2 y_m\\
&=\frac{1}{d^2} \sum_{s,r=0}^{d-1}x_{-r} x_{-s+r} y_{m+s} + (\U^2+2) \frac{\U z}{d} \sum_{r=0}^{d-1} x_{-r} y_{m+r}  + (\U^2+1)
\U^2 z^2 y_m.
\end{align*}

\end{proof}
For the term $ t_1^at_2^bt_3^cb_1g_1b_1r_{1,2}$ we have that:
\[
t_1^at_2^bt_3^cb_1g_1b_1r_{1,2}= t_1^a t_2^{b}t_3^cb_1g_1b_1 e_1e_2g_{1,2} = e_1^{(m)}e_2b_1g_1b_1g_{1,2}.
\]
Denote now
$A:={\rm Tr}(e_1^{(m)}e_2b_1g_1b_1g_{1,2})$, where $m=a+b+c$. By expanding $g_{1,2}$ we obtain that
$$A= A_1 + \U(A_2+A_3)+\U^2(A_4+A_5)+\U^3 A_6.$$
with:
$$\begin{array}{cccccc}
   A_1 & = & {\rm Tr}(e_1^{(m)}e_2b_1g_1b_1) \quad&  A_4  & = & {\rm Tr}(e_1^{(m)}e_2b_1g_1b_1g_1g_2) \\
   A_2 & =  & {\rm Tr}(e_1^{(m)}e_2b_1g_1b_1g_1) \quad&   A_5&=  &  {\rm Tr}(e_1^{(m)}e_2b_1g_1b_1g_2g_1)\\
   A_3 & = & {\rm Tr}(e_1^{(m)}e_2b_1g_1b_1g_2) \quad&   A_6& = &   {\rm Tr}(e_1^{(m)}e_2b_1g_1b_1g_1g_2g_1)
\end{array}$$

For the term $t_1^at_2^bt_3^cb_1g_1b_1g_2g_1b_1r_{1,2}$ we work in an analogous way.
Denote the following
\begin{eqnarray*}
 B  &:=&{\rm Tr}(e_1^{(m)}e_2b_1g_1b_1g_2g_1b_1g_{1,2}) = {\rm Tr}(t_1^a t_2^{b}t_3^cb_1g_1b_1 g_2g_1b_1e_1e_2g_{1,2})  \\
   &=&{\rm Tr}(t_1^at_2^bt_3^cb_1g_1b_1g_2g_1b_1r_{1,2}),
\end{eqnarray*}
where $m=a+b+c$. By expanding the term $g_{1,2}$ we obtain:
$$B= B_1 + \U(B_2+B_3)+\U^2(B_4+B_5)+\U^3 B_6$$
with
$$\begin{array}{cccccc}
  B_1 & = & {\rm Tr}(e_1^{(m)}e_2b_1g_1b_1g_2g_1b_1) \quad& B_4 & = & {\rm Tr}(e_1^{(m)}e_2b_1g_1b_1g_2g_1b_1g_1g_2) \\
  B_2 & = & {\rm Tr}(e_1^{(m)}e_2b_1g_1b_1g_2g_1b_1g_1)  \quad& B_5 & = &  {\rm Tr}(e_1^{(m)}e_2b_1g_1b_1g_2g_1b_1g_2g_1)\\
  B_3 & = & {\rm Tr}(e_1^{(m)}e_2b_1g_1b_1g_2g_1b_1g_2) \quad&  B_6& = &{\rm Tr}(e_1^{(m)}e_2b_1g_1b_1g_2g_1b_1g_1g_2g_1)
\end{array}$$
We then have the following lemma:

\begin{lemma}\label{traceABis}
The following holds in ${\rm Y}^{\mathtt{B}}_{d,n}(\U, \V)$:

\begin{align*}
A=& \frac{z}{d} \sum_{r=0}^{d-1} x_{-r} x_{m+r}+(\V+\V^{-1})\frac{z}{d} \sum_{r=0}^{d-1} x_{-r} y_{m+r} +
\frac{1}{d^2}\sum_{r,s}x_{-r}y_{m+s}y_{r-s}+(\U-\U^{-1})A_1 \nonumber \\
&+z^{2}x_m+ (\V - \V^{-1})  \frac{z^2}{d}\sum_r y_{r} +  2\left ( \frac{z}{d} \sum_s y_{m+s}y_{-s} +(\U-\U^{-1})A_3 \right) +
A_3+(\U-\U^{-1})A_4.\\
B=& \frac{z}{d} \sum_{r=0}^{d-1} x_{-r} y_{m+r}+(\U-\U^{-1})z^2y_m+ (\V-\V^{-1})\frac{z}{d^2}\sum_{k,r}y_{-k}y_{m+k+r}\\
&+ (\V-\V^{-1})(\U-\U^{-1})\frac{z^2}{d}\sum_rx_{m+r} +(\V-\V^{-1})^2(\U-\U^{-1})\frac{z^2}{d}\sum_r y_{m+r}\\
& + 2 \left (z^2y_m+z^2(\V-\V^{-1})\frac{1}{d}\sum_rx_{r}  +z^2(\V-\V^{-1})^2\frac{1}{d}\sum_ry_{r}+(\U-\U^{-1})B_1 \right )\\
& + 2( B_1+(\U-\U^{-1})B_2) + \frac{1}{d^2} \sum_{s,k} y_{-k} y_{-s+k} y_{m+s} + (\U - \U^{-1}) \frac{z}{d} \\
& + \sum_{k} y_{-k} x_{m+k} + (\U - \U^{-1})(\V - \V^{-1}) \frac{z}{d^2} \sum_{r,k} y_{-k} y_{m+r+k} + (\U - \U^{-1}) ( B_1 +
B_5).\\
\end{align*}
\end{lemma}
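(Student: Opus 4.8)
The plan is to compute the two traces exactly along the decomposition already set up in the statement, so that $A=A_1+\U(A_2+A_3)+\U^2(A_4+A_5)+\U^3A_6$ and $B=B_1+\U(B_2+B_3)+\U^2(B_4+B_5)+\U^3B_6$, and then to evaluate each summand inside ${\rm Y}^{\mathtt B}_{d,3}(\U,\V)$, regarding the factor $e_1^{(m)}e_2$ as the framing weight attached to the words in $b_1,g_1,g_2$. Since the desired closed forms deliberately retain $A_1,A_3,A_4$ and $B_1,B_2,B_5$ unexpanded on the right-hand side, the actual content is to reduce the remaining summands ($A_2,A_5,A_6$ and $B_3,B_4,B_6$) to $\mathbb{K}$-linear combinations of these base traces plus explicit framing sums, and then to collect coefficients; the agreement with the stated expressions then follows by direct comparison.

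The algebraic toolkit is as follows. The quadratic relations \eqref{quadraticU} and \eqref{quadraticV} linearize any square $g_i^2$ or $b_1^2$ that a braid move creates; these are the source of the coefficients $(\U-\U^{-1})$ and $(\V\pm\V^{-1})$ and of the base-trace terms such as $(\U-\U^{-1})A_1$, $(\U-\U^{-1})A_4$ and $(\V-\V^{-1})(\U-\U^{-1})$-products appearing in the target. The braid relation \eqref{braid4}, $b_1g_1b_1g_1=g_1b_1g_1b_1$, together with the ordinary braid and commutation relations \eqref{braid1}--\eqref{braid3} and \eqref{modular2}--\eqref{tb1}, is used freely to bring each word into a normal form, while the conjugation property (v) of Theorem~\ref{trace} permits the cyclic rearrangement that places a top-index generator at the right end. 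Finally, the absorption identities of Lemmas~\ref{multfraA} and \ref{multfraB} shortcut several of the longer summands, exactly as in Proposition~\ref{propframedtypeA}.

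The framing bookkeeping is carried out by writing $e_1^{(m)}e_2=\frac{1}{d^2}\sum_{r,s}t_1^{m+s}t_2^{-s}t_2^{r}t_3^{-r}$ and pushing the framing generators rightward through the braiding generators by rule \eqref{th}, $t_jg_i=g_it_{s_i(j)}$, noting that $s_1$ fixes the index $3$ while $s_2$ interchanges $2$ and $3$, so a factor $t_3$ is converted to $t_2$ precisely when it crosses $g_2$. One then peels from the top down: each trailing $t_3^{c}$ and $t_2^{b}$ is removed by rule (iv), producing parameters $x_{(\cdot)}$; each trailing $g_2$ by rule (ii), producing a factor $z$; and at the bottom level the looping generator $b_1$ is absorbed by the base case of rule (iii), namely ${\rm Tr}_1(t_1^{a}b_1)=y_{a}$, producing the parameters $y_{(\cdot)}$. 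Repeated squares $b_1^2$ are linearized by \eqref{quadraticV}, which is what mixes the $x$- and $y$-families and yields the coefficients $(\V\pm\V^{-1})$; the index shifts forced by \eqref{th} are exactly what assemble the double sums $\frac{1}{d^2}\sum_{r,s}x_{-r}y_{m+s}y_{r-s}$ and the single sums $\frac{z}{d}\sum_r x_{-r}y_{m+r}$ in the answer. After these reductions one lands in ${\rm Y}^{\mathtt B}_{d,2}$ and ${\rm Y}^{\mathtt B}_{d,1}$, whose traces are evaluated directly and matched against Lemmas~\ref{r12gbtr} and \ref{traceb1r12}.

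The main obstacle is not conceptual but the sheer volume and delicacy of the index-tracking, especially for $B$, whose defining word $b_1g_1b_1g_2g_1b_1$ is long and spawns many intermediate terms after each quadratic or braid step; one must keep every shift in the framing subscripts perfectly consistent so that the $x$- and $y$-sums telescope into the stated closed forms. A secondary subtlety is choosing, at each stage, which summands to leave as base traces ($A_1,A_3,A_4$ and $B_1,B_2,B_5$) rather than expanding further, since expanding them would reintroduce the same words and risk circularity. The correct stopping rule is supplied by the quadratic relations themselves, which strictly decrease the number of braiding generators at each application and thereby guarantee termination of the reduction.
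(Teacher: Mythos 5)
Your proposal is correct and matches the paper's own proof, which is exactly the long computation you describe (carried out in Appendix~A): expand $g_{1,2}$ to get the $A_i$ and $B_i$, expand $e_1^{(m)}e_2$ into framing sums pushed through the braiding generators via relation \eqref{th}, peel generators with the Markov rules, linearize $g_i^2$ and $b_1^2$ with the quadratic relations, and stop the reduction at the base traces $A_1,A_3,A_4$ and $B_1,B_2,B_5$. The only cosmetic difference is that the paper's appendix does not actually need the absorption Lemmas~\ref{multfraA}--\ref{multfraB} or a final matching against Lemmas~\ref{r12gbtr} and \ref{traceb1r12}; these references in your sketch are harmless but inessential.
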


\begin{proof}
 The proof is a long straightforward computation. For instance, for the expressions $A_1, A_2$ and $B_1$ we have:

\begin{align*}
 A_1  &=  \frac{1}{d^2}\sum_{r,s}{\rm Tr}(t_1^{m+s}t_2^{r-s}t_3^{-r}b_1g_1b_1)=  \frac{z}{d}\sum_{s}{\rm Tr}(t_1^{m+s}t_2^{-s}b_1^2)
 \\
   &=  \frac{z}{d}\sum_{s}[{\rm Tr}(t_1^{m+s}t_2^{-s}) + (\V-\V^{-1}){\rm Tr}(t_1^{m+s}t_2^{-s}b_1f_1)]\\
   &= \frac{z}{d}\sum_{s}[{\rm Tr}(t_1^{m+s}t_2^{-s}) + (\V-\V^{-1})\frac{1}{d}\sum_r{\rm Tr}(t_1^{m+s+r}t_2^{-s}b_1)]  \\
   &= \frac{z}{d}\sum_{s}[x_{m+s}x_{-s} + (\V-\V^{-1})\frac{1}{d^2}\sum_r x_{-s}y_{m+s+r}]\\
A_3 &= \frac{1}{d^2}\sum_{r,s}{\rm Tr}(t_1^{m+s}t_2^{r-s}t_3^{-r}b_1g_1b_1g_2) =  \frac{z}{d}\sum_{k}{\rm
Tr}(t_1^{m+k}t_2^{-k}b_1g_1b_1)= z^2{\rm Tr}(t_1^{m}b_1^2)  \\
    &= z^2[{\rm Tr}(t_1^{m})+(\V-\V^{-1}){\rm Tr}(t_1^{m}b_1f_1)]= z^{2}x_m+ (\V - \V^{-1}) \frac{z^2}{d}\sum_r y_{m+r}\\
    &=z^{2}x_m+ (\V - \V^{-1})  \frac{z^2}{d}\sum_r y_{r}.
\end{align*}

\begin{align*}
B_1 &=  \frac{1}{d^2}\sum_{r,s}{\rm Tr}(t_1^{m+s}t_2^{r-s}t_3^{-r}b_1g_1b_1g_2g_1b_1)=  \frac{1}{d^2}\sum_{r,s}{\rm
Tr}(t_1^{m+s}t_2^{r-s}b_1g_1b_1g_2 t_2^{-r}g_1b_1)\\
   &=   \frac{z}{d^2}\sum_{r,s}{\rm Tr}(t_1^{m+s-r}t_2^{r-s}b_1g_1b_1g_1b_1)=  \frac{z}{d}\sum_{k}{\rm
   Tr}(t_1^{m+k}t_2^{-k}b_1^2g_1b_1g_1)
\end{align*}

\begin{align*}
   &=  \frac{z}{d}\sum_{k}[{\rm Tr}(t_1^{m+k}t_2^{-k}g_1b_1g_1)+(\V-\V^{-1}){\rm Tr}(t_1^{m+k}t_2^{-k}b_1f_1g_1b_1g_1)]\\
  &=  \frac{z}{d}\sum_{k}[{\rm Tr}(t_1^{m+k}t_2^{-k}g_1b_1g_1^{-1})+(\U-\U^{-1}){\rm Tr}(t_1^{m+k}t_2^{-k}g_1b_1)+\\
   &(\V-\V^{-1}){\rm Tr}(t_1^{m+k}t_2^{-k}f_1b_1g_1b_1g_1^{-1})+(\V-\V^{-1})(\U-\U^{-1}){\rm Tr}(t_1^{m+k}t_2^{-k}f_1b_1g_1b_1)] \\
   &=  \frac{z}{d}\sum_{k}[y_{-k}x_{m+k}+(\U-\U^{-1})zy_m+(\V-\V^{-1})\frac{1}{d}\sum_ry_{-k}y_{m+k+r}\\
   &+z(\V-\V^{-1})(\U-\U^{-1}){\rm Tr}(t_1^{m}f_1b_1^2)] \\
   &=\frac{z}{d}\sum_{k}y_{-k}x_{m+k}+(\U-\U^{-1})z^2y_m+ (\V-\V^{-1})\frac{z}{d^2}\sum_{k,r}y_{-k}y_{m+k+r}\\
   &+ (\V-\V^{-1})(\U-\U^{-1})\frac{z^2}{d}\sum_rx_{m+r} + (\V-\V^{-1})^2(\U-\U^{-1})\frac{z^2}{d}\sum_r y_{m+r}.
\end{align*}

In a similar way, we obtain the following equations for the remaining expressions.
  \begin{align*}
  A_2 &=\frac{1}{d^2}\sum_{r,s}x_{-r}y_{m+s}y_{r-s}+(\U-\U^{-1})A_1.\\
  A_4 &=\frac{z}{d} \sum_s y_{m+s}y_{-s}  +(\U-\U^{-1})A_3, \quad A_5 = A_4. \\
  A_6 &= A_3+(\U-\U^{-1})A_4.
\end{align*}
and
$$\begin{array}{rlrl}
 B_2 &=z^2y_m+z^2(\V-\V^{-1})\frac{1}{d}\sum_rx_{r}+z^2(\V-\V^{-1})^2\frac{1}{d}\sum_ry_{r}+(\U-\U^{-1})B_1, & &\\
  B_4 &= \frac{z}{d} \sum_{r} {\rm Tr} (t_1^{m+r} t_2^{-r} b_1^2 g_1 b_1 g_1) + \frac{z}{d} (\U - \U^{-1}) \sum_{r} {\rm
  Tr}(t_1^{m+r}
t_2^{-r} b_1 g_1 b_1 g_1 b_1 g_1 ).&  & \\
 B_3 &=B_2,\quad  B_5 = B_4. & &\\
B_6 &= \frac{1}{d^2} \sum_{s,k} y_{-k} y_{-s+k} y_{m+s} + (\U - \U^{-1}) \frac{z}{d} \sum_{k} y_{-k} x_{m+k} & &\\
  &  + (\U - \U^{-1})(\V -  \V^{-1})\frac{z}{d^2} \sum_{r,k} y_{-k} y_{m+r+k}  + (\U - \U^{-1}) ( B_1 + B_5), & &
\end{array}$$
which implies the result.
\end{proof}

\subsection{{\it A Markov trace on the algebra ${\rm FTL}^\mathtt{B}_{d,n}$}}
In order to find the necessary and sufficient conditions so that ${\rm Tr}$  passes
to ${\rm FTL}_{d,n}^{\mathtt{B}}(\U,\V)$, one has to make sure that ${\rm Tr}$ annihilates the defining ideal
$\langle r_{1,2} , r_{\mathtt{B}} \rangle$ of ${\rm FTL}_{d,n}^{\mathtt{B}}(\U,\V)$. For this reason, we have to solve the
following system of equations:

\begin{equation}\label{condsystem} \left (\Sigma \right) =  \left\{\begin{array}{r}
A=0\\
B=0\\
{\rm Tr}(r_{\mathtt{B}}) =0 \\
{\rm Tr} (e_1^{(m)}e_2 r_{1,2}) =0 \\
{\rm Tr}(e_1^{(m)}e_2 b_1 r_{1,2}) =0
\end{array}\right.
\end{equation}

The above system may initially seem intimidating, however, using harmonic analysis on the underlying finite group simplifies
things considerably. We shall follow the method of P. Gerard\'in \cite[Appendix]{jula2}. We will first write the above system in its functional notation and then apply the Fourier transform, which is a standard tool in the theory of framization of knot algebras \cite{gojukola,
gojukola2, fjl}.
We shall treat separately the first two equations because of
their length.

Before solving \eqref{condsystem}, we will make a short digression on the  Fourier transform of a
complex function on a finite cyclic group. Let $L(C_d):=\mathbb{C}[C_d]$ be the group algebra formed by all complex functions on
$C_d$. The convolution product in this algebra is defined by:
\[
(f \ast g)(x) = \sum_{y\in C_d} f(y) g(x-y) \quad \mbox{where} f,g \in \mathbb{C} [C_d].
\]
We also define the product by coordinates in $L(C_d)$ as follows:
\[
fg : x \rightarrow f(x)g(x)\quad \mbox{where} f, g \in C_d.
\]
The set $\{ \delta_a \ | \ a \in  C_d \}$, where $\delta_a \in L(C_d)$ is the
function with support $\{ a \}$, is a linear basis for $ L(C_d)$ with respect to the convolution
product. From now on we will consider $C_d$ as an additive group, that is, $C_d=\mathbb{Z}/d\mathbb{Z}$. The Fourier transform $\mathcal{F}$ is the linear automorphism on $L(C_d)$ defined by $f\mapsto \widehat{f}$, with
\[
\widehat{f}(k) : = ( f \ast \chi_k)(0) = \sum_{y\in C_d}f(y) \chi_k(-y),
\]
where $\chi_k:  a \mapsto \cos \left ( \frac{2 \pi k a} {d} \right ) + i \sin \left ( \frac{2 \pi k a} {d} \right )$  denote the characters of $C_d$ for $k\in
\{0,\ldots, d-1\}$. Note that $\Hat{\Hat{f}}(x) = d
f(-x)$. Finally, note that the
elements in the group algebra $L(C_d)$ can also be identified with the set of formal sums $\{\sum_{s=0}^{d-1} \alpha_s t^s\
|\alpha_s\in \mathbb{C}\}$ as follows:
\[
(f :  C_d \rightarrow \mathbb{C}) \longleftrightarrow \sum_{s=0}^{d-1} f(t^s)t^s
\]
We will often use this identification, since it makes some computations easier. For details regarding the properties of the
convolution product and the Fourier transform the reader is referred to \cite{te, jula, gojukola2,fjl}.\\

We are now ready to solve \eqref{condsystem}. We start with equation $A=0$. Denote its functional form by $\mathfrak{F}A=0$  and
consider the function $\mathbf{1}:C_d\rightarrow \mathbb{C}$ defined by $\mathbf{1}(m)=1$ for all
$m\in C_d $. We then have:
\[
\mathfrak{F}A= \mathfrak{F}A_1 + \U(\mathfrak{F}A_2+\mathfrak{F}A_3)+\U^2(\mathfrak{F}A_4+\mathfrak{F}A_5)+\U^3 \mathfrak{F}A_6,
\]
where
\begin{align*}
\mathfrak{F}A_1 &= \frac{z}{d}x*x+(\V-\V^{-1})\frac{z}{d^2}x*y*\mathbf{1};\quad \mathfrak{F}A_2 =
\frac{1}{d^2}x*y*y+(\U-\U^{-1})A_1\\
\mathfrak{F}A_3 &= z^2x+(\V-\V^{-1})\frac{z^2}{d}y*\mathbf{1};\quad \mathfrak{F}A_4 =  \frac{z}{d}y*y+(\U-\U^{-1})A_3 =
\mathfrak{F}A_5\\
 \mathfrak{F}A_6 &= A_3+(\U-\U^{-1})A_4
 \end{align*}

The case of equation $B=0$ is analogous.
 \[
\mathfrak{F} B= \mathfrak{F}B_1 + \U(\mathfrak{F}B_2+\mathfrak{F}B_3)+\U^2(\mathfrak{F}B_4+\mathfrak{F}B_5)+\U^3 \mathfrak{F}B_6,
 \]
 where:

 \begin{align*}
\mathfrak{F}B_1 &=\frac{z}{d}x*y+(\U-\U^{-1})z^2
y+(\V-\V^{-1})\frac{z}{d^2}y*y*\mathbf{1}+\frac{z^2}{d}(\V-\V^{-1})(\U-\U^{-1})x*\mathbf{1}\\
   &+\frac{z^2}{d}(\V-\V^{-1})^2(\U-\U^{-1})y*\mathbf{1}\\
\mathfrak{F}B_2 &= z^2 y+\frac{z^2}{d}(\V-\V^{-1})x*\mathbf{1}+\frac{z^2}{d}(\V-\V^{-1})^2y*\mathbf{1}+ (\U-\U^{-1})B_1 =
\mathfrak{F}B_3\\
\mathfrak{F}B_4 &= B_1+(\U-\U^{-1})B_2 = \mathfrak{F}B_5\\
 \mathfrak{F}B_6 &= \frac{1}{d^2}y*y*y+\frac{z}{d}(\U-\U^{-1})x*y+(\U-\U^{-1})(\V-\V^{-1})\frac{z}{d^2}y*y*\mathbf{1}+(\U -
 \U^{-1}) ( B_1 + B_5).
 \end{align*}

From the above, the system $\left ( \Sigma \right )$  becomes:
{\small
\begin{align}
&\mathfrak{F}A=0 \label{sys1}\\
&\mathfrak{F}B=0 \label{sys2}\\
&x*(x*\mathbf{1})+\U^2\V^2y*(y*\mathbf{1})+\V(\U^2+1)x*(y*\mathbf{1}) + \nonumber\\
&\hspace{1.4cm}+ dz\U(1+\U^2\V^2)x*\mathbf{1}+ dz(\U^3\V^3 + \U
\V)y*\mathbf{1} =0 \label{sys3}\\
 &x*(x*x)+dz\U(\U+2)x*x+d^2z\U^2(\U^2+1)x =0 \label{sys4}\\
 &x*(x*y)+dz\U(\U+2)x*y+d^2z\U^2(\U^2+1)y =0 \label{sys5}
 \end{align}
}
Let $x_0,\ldots,x_{d-1}$ and $y_0,\ldots,y_{d-1}$ be the parameters of {\rm
Tr}. Let also $x:C_d \rightarrow
\mathbb{C}$ the function such that  $x(0)=1$ and $x(k)=x_k$, $1 \leq k \leq d-1$, and let  $y:C_d
\rightarrow \mathbb{C}$ be the function such that $y(k)=y_k$, $0 \leq k \leq d-1$.\smallbreak

We will solve the system of equations \eqref{sys1}-\eqref{sys5}. We start with \eqref{sys4}, apply the Fourier
transform, and reproduce the proof of  \cite[Theorem~6 and Section~7]{gojukola2}. We obtain the following values for
$\widehat{x}$:

\begin{equation}\label{FTxsol}
\widehat{x} = - \left (d\U z \sum_{m\in {\rm Sup}_1}t^m + d\U (\U^2+1) z \sum_{m\in {\rm Sup}_2}t^m \right ).
\end{equation}

Using the properties of the Fourier transform, we obtain the expression for the $x_k$'s:
\begin{equation}\label{xsol}
x_k = - z \left (\U  \sum_{m \in {\rm Sup}_1} \chi_m(k) + \U (\U^2+1) \sum_{m \in {\rm Sup}_2} \chi_m (k) \right ).
\end{equation}

Next, we use \eqref{sys5} to detemine ${\rm Sup}(\widehat{y})$. By applying the Fourier transform once again we obtain:
$$\underbrace{(\widehat{x}^2+dz\U(\U+2)\widehat{x}+d^2z\U^2(\U^2+1))}_{D}\widehat{y}=0. $$
We know that $D=0$ for all $m\in {\rm Sup}(\widehat{x})$, therefore, $\widehat{y}$ can be free in ${\rm
Sup}(\widehat{x})$. On the other hand, if $n\not\in {\rm Sup}(\widehat{x})$ we obtain:
$$d^2z\U^2(\U^2+1)\widehat{y}(n)=0,$$
which implies that $\widehat{y}(n)=0$ and therefore, supposing that
$d^2z\U^2(\U^2+1)\not=0$), we deduce that ${\rm Sup}(\widehat{y})\subseteq {\rm Sup}(\widehat{x})$.

We will use the expressions for  $x$ and for ${\rm Sup} ( \widehat{y} ) $ to solve the
remaining equations.
Let $\mathbf{1} = \sum_{k=0}^{d-1} \mathbf{1}(m) t^m$ and observe that the Fourier transform of the function $\mathbf{1} :
C_d \rightarrow \mathbb{C}$ is:
\[
\widehat{\mathbf{1}} = \sum_{s=0}^{d-1} (\mathbf{1} \ast \mathbf{i}_s (0)) t^s = \sum_{s=0}^{d-1} \left [
\sum_{r=0}^{d-1}\mathbf{1}(r) \chi_s(-r)  \right] t^s  = \sum_{s=0}^{d-1} \left [  \sum_{r=0}^{d-1} \chi_s(-r)  \right]
t^s.
\]
 Thus we have that:
 \[
 \widehat{\mathbf{1}}(k) = \left \{ \begin{array}{ll} d, & k=0 \\ 0, & k \neq 0  \end{array} \right. .
 \]
This means that in order to obtain the full set of solutions for $(\Sigma$) we will have to solve  \eqref{sys1}-\eqref{sys3} for
both zero and non-zero values of $k$.
Moreover, from \eqref{FTxsol} and depending on which subset of ${\rm Sup}(\widehat{x})$ the element $k$ lies in,   we have the following
possibilities for $\widehat{x}(k)$:
 \[
\widehat{x}(k) = \left \{ \begin{array}{ll} -d \U z , & k \in {\rm Sup}_1 \\ -d \U ( \U^2+1) z, & k \in {\rm Sup}_2 \end{array}
\right. , \ k \in C_d  .
\]
For $k\in C_d$ and $k \neq 0$, the equation \eqref{sys3} vanishes and we obtain the following solutions for $\widehat{y}(k)$:

\begin{equation}\label{knon0}
\widehat{y}(k)=\left\{\begin{array}{ll}
  - d \U z \quad\hbox{or}\quad   d \U z, & \hbox{if $k \in{\rm Sup}_1$}, \ k\neq0\\
  0\quad \hbox{or} \quad \quad - d \U z ( \U^2 +1)\quad \hbox{or} \quad  d \U z (\U^2+1), & \hbox{if $k \in{\rm Sup}_2$}, \ k\neq 0
\end{array}\right. .
\end{equation}

On the other hand, for $k=0$ we obtain the following values for $\widehat{y}(0)$:

\begin{equation}\label{k=0}
\widehat{y}(0)=\left\{\begin{array}{ll}
   \frac{d \U z}{\V} \quad\hbox{or}\quad  - d \U\V  z, & \hbox{if $0 \in{\rm Sup}_1$}\\
   \frac{d \U z ( \U^2 +1)}{\V}\quad \hbox{or} \quad  \frac{d \U z (1-\V^2)}{\V}, & \hbox{if $0 \in{\rm Sup}_2$}
\end{array}\right. .
\end{equation}

Combining \eqref{knon0} and \eqref{k=0}, we deduce the following four solutions for $\widehat{y}$:
{\small
\begin{align*}
   \widehat{y}^1&= -d\U z\left( -\frac{1}{\V}+ \sum_{m\in{\rm Sup}^y_1}t^m - \sum_{m\in{\rm Sup}^y_2} t^m + (\U^2+1) \sum_{m\in{\rm
   Sup}^y_3} t^m - (\U^2+1) \sum_{m\in{\rm Sup}^y_4} t^m \right)\\
 \widehat{y}^2&= -d\U z\left( \V+ \sum_{m\in{\rm Sup}^y_1}t^m - \sum_{m\in{\rm Sup}^y_2} t^m + (\U^2+1) \sum_{m\in{\rm Sup}^y_3}
 t^m - (\U^2+1) \sum_{m\in{\rm Sup}^y_4} t^m \right)\\
 \widehat{y}^3&= -d\U z\left( -\frac{(\U^2+1)}\V+ \sum_{m\in{\rm Sup}^y_1}t^m - \sum_{m\in{\rm Sup}^y_2} t^m + (\U^2+1)
 \sum_{m\in{\rm Sup}^y_3} t^m - (\U^2+1) \sum_{m\in{\rm Sup}^y_4} t^m \right)\\
  \widehat{y}^4&= -d\U z\left( \frac{\V^2-1}{\V}+ \sum_{m\in{\rm Sup}^y_1}t^m - \sum_{m\in{\rm Sup}^y_2} t^m + (\U^2+1)
  \sum_{m\in{\rm Sup}^y_3} t^m - (\U^2+1) \sum_{m\in{\rm Sup}^y_4} t^m \right),
\end{align*}
}
where ${\rm Sup}^y_i=\{a\in C_d \ |\ \widehat{y}(a)=f(i)\ \mbox{for} \ 0 \leq i \leq 4 \}$ and $f:\{1,\dots, 4\}\rightarrow
\mathbb{C}$ is the function that is defined by  $f=-d\U z\delta_1+ d\U z \delta_2- d\U z(\U^2+1)\delta_3+ d\U z(\U^2+1)\delta_4$.
Moreover, from the above definitions for ${\rm Sup}_i^y$ together with  \eqref{knon0} and \eqref{k=0} we deduce the inclusions:
\begin{align*}
&{\rm Sup}^y_1\sqcup {\rm Sup}^y_2\sqcup\{0\} = {\rm Sup}_1\  \mbox{and} \  {\rm Sup}^y_3\sqcup{\rm Sup}^y_4\subseteq {\rm Sup_2},
\ \mbox{if} \  0\in {\rm Sup}_1. \\
& {\rm Sup}^y_1\sqcup {\rm Sup}^y_2 = {\rm Sup}_1 \  \mbox{and}  \ {\rm Sup}^y_3\sqcup{\rm Sup}^y_4\sqcup\{0\}\subseteq {\rm
Sup_2}, \ \mbox{if} \ 0\in {\rm Sup}_2.
\end{align*}

Using now the properties of the Fourier transform we are able to determine the expression for $y_k^{r}$'s,  $k\in
C_d$ and $r\in \{1,\dots 4\}$:

\begin{align*}
   y^1_k&= -\U z\left( -\frac{1}{\V}\chi_0(k)+ \sum_{m\in{\rm Sup}^y_1}\chi_m(k) - \sum_{m\in{\rm Sup}^y_2}\chi_m(k) +
   (\U^2+1) \sum_{m\in{\rm Sup}^y_3} \chi_m(k)- (\U^2+1) \sum_{m\in{\rm Sup}^y_4}\chi_m(k)\right)\\
 y^2_k&= -\U z\left( \V\chi_0(k)+ \sum_{m\in{\rm Sup}^y_1}\chi_m(k) - \sum_{m\in{\rm Sup}^y_2}\chi_m(k) + (\U^2+1)
 \sum_{m\in{\rm Sup}^y_3} \chi_m(k) - (\U^2+1) \sum_{m\in{\rm Sup}^y_4} \chi_m(k) \right)\\
 y^3_k&= -\U z\left( -\frac{(\U^2+1)}\V\chi_0(k)+ \sum_{m\in{\rm Sup}^y_1}\chi_m(k) - \sum_{m\in{\rm Sup}^y_2}\chi_m(k) +
 (\U^2+1) \sum_{m\in{\rm Sup}^y_3}\chi_m(k) - (\U^2+1) \sum_{m\in{\rm Sup}^y_4}\chi_m(k) \right)\\
 y^4_k&= -\U z\left( \frac{\V^2-1}{\V}\chi_0(k)+ \sum_{m\in{\rm Sup}^y_1}\chi_m(k) - \sum_{m\in{\rm Sup}^y_2} \chi_m(k) +
 (\U^2+1) \sum_{m\in{\rm Sup}^y_3} \chi_m(k) - (\U^2+1) \sum_{m\in{\rm Sup}^y_4}\chi_m(k)\right) .
\end{align*}

Finally, we return to \eqref{xsol} in order to determine the values of the trace parameter $z$. Recall that $x_0=1$ and thus we
have:
\begin{equation}\label{zvalsol}
1=x_0 = - z \left ( \U | {\rm Sup}_1 | + \U (\U^2+1)|{\rm Sup}_2| \right),
\end{equation}

or, equivalenlty:

\[
z= - \frac{1}{\U | {\rm Sup}_1 | + \U (\U^2+1)|{\rm Sup}_2|}.
\]
We thus have proven the main theorem of this paper, which is the following:

\begin{theorem}\label{mainthm}
Let $x: C_d \rightarrow \mathbb{C}$ such that $x(0)=1$ and $x(k) = x_k$, $1 \leq k \leq d-1$ and let also $y:
C_d \rightarrow \mathbb{C}$ such that $y(k) = y_k$, $0 \leq k \leq d-1$. The trace ${\rm Tr}$ defined on ${\rm
Y}_{d,n}^{\mathtt{B}}(\U,\V)$ passes to the quotient algebra ${\rm FTL}_{d,n}^{\mathtt{B}}(\U,\V)$ if and only if the parameters of
the trace satisfy the following conditions:
$$
x_k = - z \left ( \U \sum_{m \in {\rm Sup}_1} \chi_m(k) + \U  (\U^2+1)  \sum_{m \in {\rm Sup}_2} \chi_m (k) \right ),\hbox{and}
$$
$$ z= - \frac{1}{ \U| {\rm Sup}_1 | + \U (\U^2+1) |{\rm Sup}_2|},
$$
where ${\rm Sup}_1 \sqcup {\rm Sup_2}$ is the support of the Fourier transform of $x$, $\widehat{x}$. Moreover, we have that:
\[
{\rm Sup}(\widehat{y}) \subseteq {\rm Sup}(\widehat{x}),
\]
where $\widehat{y}$ is the Fourier transform of $y$ and one of the two cases holds:\bigskip

\noindent 1.  If $0\in {\rm Sup}_1$, the parameters $y_k$ have the following form:

\begin{align*}
  y_k=& -\U z\left ( - \frac{1}{\V}\chi_0(k)+ \sum_{m\in{\rm Sup}^y_1} \chi_m(k) - \sum_{m\in{\rm Sup}^y_2} \chi_m(k) +
  (\U^2+1) \sum_{m\in{\rm Sup}^y_3} \chi_m(k) - (\U^2+1) \sum_{m\in{\rm Sup}^y_4} \chi_m(k) \right )\\
  y_k=& -\U z\left (\V \chi_0(k)+ \sum_{m\in{\rm Sup}^y_1} \chi_m(k) - \sum_{m\in{\rm Sup}^y_2} \chi_m(k) +
   (\U^2+1) \sum_{m\in{\rm Sup}^y_3} \chi_m(k) - (\U^2+1) \sum_{m\in{\rm Sup}^y_4} \chi_m(k) \right ).
\end{align*}

\noindent 2. If $0\in {\rm Sup}_2$, the parameters $y_k$ have the following form:

\begin{align*}
  y_k=& -\U z\left ( -\frac{(\U^2+1)}{\V}\chi_0(k)+ \sum_{m\in{\rm Sup}^y_1} \chi_m(k) - \sum_{m\in{\rm Sup}^y_2} \chi_m(k)
  + (\U^2+1) \sum_{m\in{\rm Sup}^y_3} \chi_m(k) - (\U^2+1) \sum_{m\in{\rm Sup}^y_4} \chi_m(k) \right )\\
   y_k=& -\U z\left ( \frac{\V^2-1}{\V}\chi_0(k)+ \sum_{m\in{\rm Sup}^y_1} \chi_m(k) - \sum_{m\in{\rm Sup}^y_2} \chi_m(k) +
   (\U^2+1) \sum_{m\in{\rm Sup}^y_3} \chi_m(k) - (\U^2+1) \sum_{m\in{\rm Sup}^y_4} \chi_m(k) \right ).
 \end{align*}

where $\sqcup_{i=0}^{4}{\rm Sup}^y_i = {\rm Sup}(\widehat{y})$. Finally, the following holds:

\begin{align*}
&{\rm Sup}^y_1\sqcup {\rm Sup}^y_2\sqcup\{0\} = {\rm Sup}_1\  \mbox{and} \  {\rm Sup}^y_3\sqcup{\rm Sup}^y_4\subseteq {\rm Sup_2},
\ \mbox{if} \  0\in {\rm Sup}_1 \\
& {\rm Sup}^y_1\sqcup {\rm Sup}^y_2 = {\rm Sup}_1 \  \mbox{and}  \ {\rm Sup}^y_3\sqcup{\rm Sup}^y_4\sqcup\{0\}\subseteq {\rm
Sup_2}, \ \mbox{if} \ 0\in {\rm Sup}_2.
\end{align*}

\end{theorem}

\begin{corollary}\label{parvals}
In the case where one of ${\rm Sup}_1$ or ${\rm Sup}_2$ is the empty set, the values of the $x_k$'s are solutions of the ${\rm
E}$-system, while the the $y_k$'s are solutions of the ${\rm F}$-system. More precisely we have that:\bigskip

\noindent 1. If ${\rm Sup}_1 = \emptyset$, then:
\[
0 \in {\rm Sup}_2 , \quad x_k = \frac{1}{| {\rm Sup_2} |} \sum_{m \in {\rm Sup}_2} \chi_m (k), \quad z = -\frac{1}{\U (\U^2+1)
|{\rm Sup}_2|}
\]
and the $y_k$'s are one of the following solutions of the ${\rm F}$-system:

\begin{align*}
(i)\ y_k&= - \frac{1}{\V |{\rm Sup}_2 | } \chi_0(k) + \frac{1}{|{\rm Sup}_2 |} \left ( \sum_{m\in{\rm Sup}^y_3} \chi_m(k) -
\sum_{m\in{\rm Sup}^y_4} \chi_m(k) \right )\\
or & \\
(ii) \ y_k& =   \frac{\V^2 -1}{\V (\U^2+1)|{\rm Sup}_2 | } \chi_0(k) + \frac{1}{|{\rm Sup}_2 |} \left ( \sum_{m\in{\rm Sup}^y_3}
\chi_m(k) - \sum_{m\in{\rm Sup}^y_4} \chi_m(k) \right ).
\end{align*}

\noindent 2. If ${\rm Sup}_2 = \emptyset$, then:
\[
0 \in {\rm Sup}_1, \quad x_k = \frac{1}{| {\rm Sup_1} |}  \sum_{m \in {\rm Sup}_1} \chi_m (k), \quad z = -\frac{1}{\U  |{\rm
Sup}_1|}
\]
and the $y_k$'s are one of the following solutions of the ${\rm F}$-system:

\begin{align*}
(i) \ y_k&=  - \frac{1}{\V |{\rm Sup}_1 | } \chi_0(k) + \frac{1}{|{\rm Sup}_1 |}\left ( \sum_{m\in{\rm Sup}^y_1} \chi_m(k) -
\sum_{m\in{\rm Sup}^y_2} \chi_m(k)  \right ) \\
or & \\
(ii) \ y_k&=  \frac{\V }{|{\rm Sup}_1 | } \chi_0(k) +\frac{1}{|{\rm Sup}_1 |} \left ( \sum_{m\in{\rm Sup}^y_1} \chi_m(k) -
\sum_{m\in{\rm Sup}^y_2} \chi_m(k) \right ).
\end{align*}

\end{corollary}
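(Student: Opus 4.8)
The plan is to derive the corollary directly from Theorem~\ref{mainthm} by specializing its description of the trace parameters to the two degenerate configurations of the support ${\rm Sup}(\widehat{x}) = {\rm Sup}_1 \sqcup {\rm Sup}_2$. The first thing I would record is that Theorem~\ref{mainthm} already forces $0$ to lie in ${\rm Sup}(\widehat{x})$: its two concluding alternatives are precisely the cases ``$0 \in {\rm Sup}_1$'' and ``$0 \in {\rm Sup}_2$'', so exactly one of them holds. This dichotomy is what lets me pin down which of ${\rm Sup}_1$, ${\rm Sup}_2$ must remain nonempty in each situation and, consequently, which of the auxiliary sets ${\rm Sup}^y_i$ collapse to the empty set.

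For the case ${\rm Sup}_1 = \emptyset$, since $0$ cannot belong to an empty set I conclude that we are in the alternative $0 \in {\rm Sup}_2$, whence ${\rm Sup}(\widehat{x}) = {\rm Sup}_2$. The accompanying inclusions of Theorem~\ref{mainthm} then read ${\rm Sup}^y_1 \sqcup {\rm Sup}^y_2 = {\rm Sup}_1 = \emptyset$, forcing ${\rm Sup}^y_1 = {\rm Sup}^y_2 = \emptyset$, so both sums over these sets drop out of the formulas for $x_k$ and $y_k$. Substituting the value $z = -1/(\U(\U^2+1)|{\rm Sup}_2|)$ that comes from $x_0 = 1$ into the surviving terms, and simplifying the scalar $-\U z = 1/((\U^2+1)|{\rm Sup}_2|)$, turns the expression for $x_k$ into $\frac{1}{|{\rm Sup}_2|}\sum_{m\in{\rm Sup}_2}\chi_m(t^k)$ and the two alternatives for $y_k$ into exactly the forms (1)(i) and (1)(ii), the factor $\U^2+1$ cancelling cleanly against $z$ in each branch. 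The case ${\rm Sup}_2 = \emptyset$ is then handled by the mirror-image argument: $0 \in {\rm Sup}_1$, the sets ${\rm Sup}^y_3$ and ${\rm Sup}^y_4$ vanish because ${\rm Sup}^y_3 \sqcup {\rm Sup}^y_4 \subseteq {\rm Sup}_2 = \emptyset$, and after inserting $z = -1/(\U|{\rm Sup}_1|)$ one recovers the formulas (2)(i) and (2)(ii).

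Finally, I would identify the resulting formulas with solutions of the two nonlinear systems. The expression $x_k = \frac{1}{|S|}\sum_{m\in S}\chi_m(t^k)$, with $S = {\rm Sup}_2$ or $S = {\rm Sup}_1$, is exactly the general shape of the solutions of the ${\rm E}$-system obtained via the Fourier/Gerard\'in method \cite{jula}, while the corresponding $y_k$ are the solutions of the analogous ${\rm F}$-system \cite{fjl}; invoking those characterizations completes the identification. I expect no serious obstacle here, since the content is entirely a specialization of the main theorem. The only points demanding care are the bookkeeping that justifies the collapse of the auxiliary supports, which rests squarely on the disjoint-union constraints already established in Theorem~\ref{mainthm}, and the verification that the chosen value of $z$ produces the precise normalizing factors $1/(\V|S|)$ and $(\V^2-1)/(\V(\U^2+1)|S|)$ appearing in the stated $y_k$.
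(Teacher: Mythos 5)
Your proof is correct and matches the paper's intent: the paper states Corollary~\ref{parvals} without a separate proof, treating it as an immediate specialization of Theorem~\ref{mainthm}, and your argument carries out exactly that specialization --- using the theorem's dichotomy $0\in{\rm Sup}_1$ or $0\in{\rm Sup}_2$ together with its disjoint-union constraints to collapse the appropriate sets ${\rm Sup}^y_i$, then substituting the corresponding value of $z$ to recover the stated formulas. The arithmetic checks out in both cases (e.g.\ $-\U z = 1/\bigl((\U^2+1)\,|{\rm Sup}_2|\bigr)$ cancels the factor $\U^2+1$ cleanly, yielding the normalizations $1/(\V|{\rm Sup}_2|)$ and $(\V^2-1)/(\V(\U^2+1)|{\rm Sup}_2|)$), so nothing further is needed.
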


\begin{remark}\rm
The conditions for the trace parameters $z$ and $x_m$, $0 \leq m \leq d-1$, are in total agreement with the corresponding necessary
and sufficient conditions for the type \texttt{A} case  \cite[Theorem~6 and Section~7]{gojukola2}. This is something that is
expected since classical knot theory embeds in the knot theory of the solid torus. Further, for $d=1$ these conditions are also
coherent with the solutions found for the classical case in Section~\ref{traceHB}.
\end{remark}

\section{Link Invariants from ${\rm FTL}^{\mathtt{B}}_{d,n}(\U , \V)$}\label{secinv}

In this section we introduce the framed and classical link invariants that are derived from ${\rm
FTL}_{d,n}^{\mathtt{B}}(\U , \V)$. In analogy to the type $\mathtt{A}$  case
\cite{gojukola2}, these invariants will be specializations of the invariants ${\mathcal X}_S^{\mathtt{B}}$, where $S \subset
C_d$, that were constructed on the level of $\Y (\U , \V)$ in \cite{fjl}. We shall first discuss briefly the
invariants ${\mathcal X}_S^{\mathtt{B}}$ and then we will proceed with the specialization.

\subsection{Invariants for framed links in the solid torus}
The closure of a framed or classical braid of type $\mathtt{B}$ corresponds to a knot or a link in the solid torus. Therefore, as
mentioned earlier, in order to define link invariants on the level of $\Y$, one has to make sure that the Markov trace ${\rm Tr}$
satisfies the Markov equivalence for modular framed braids in the solid torus. To be more precise, two elements in
$\bigcup_{n}\mathcal{F}^{\mathtt{B}}_{d,n}$ are equivalent if and only if  they differ by a finite sequence of conjugations  in the
groups $\mathcal{F}^{\mathtt{B}}_{d,n}$ and stabilization moves $\mathcal{F}^{\mathtt{B}}_{d,n} \ \ni \alpha \sim \alpha
\sigma_{n}^{\pm 1} \in \mathcal{F}^{\mathtt{B}}_{d,n+1}$. Let ${\rm X}= ({\rm x}_1 , \ldots , {\rm x}_{d-1})$ a solution of the
${\rm E}$-system, ${\rm Y} =( {\rm y}_0 , \ldots , {\rm y}_{d-1} ) $ a solution of the ${\rm F}$-system and $S \subset
C_d$ that parametrizes said solutions. Then ${\rm Tr}$ can be rescaled and normalized as follows:
\begin{definition}\label{xinv}
The following map is an invariant of framed links inside the solid torus:
\[
{\mathcal X}_S^{\mathtt{B}}(\lambda, \U, \V) (\widehat{\alpha})= \left ( \frac{1 - \lambda_S}{\sqrt{\lambda_S} ( \U - \U^{-1}) {\rm
E}_S} \right )^{n-1} \left ( \sqrt{\lambda_S} \right )^{\varepsilon(\alpha)}  {\rm Tr}(\pi (\alpha)),
\]
where $\lambda_S = \frac{z- ( \U - \U^{-1}) {\rm E}_S}{z}$ is the rescaling factor, ${\rm E_S} = \frac{1}{|S|}$ for all $i$
\cite{jula,jula4} , $\varepsilon(\alpha)$ is the algebraic sum of the exponents of the $\sigma_i$'s in $\alpha$ and $\pi$ is the
natural epimorphism $\pi: \mathcal{F}^{\mathtt{B}}_{d,n} \rightarrow \Y$. Restricting $\pi$ to classical braids, which can be seen
as framed braids with all framings zero, one obtains an invariant for classical links ${\mathcal Y}_S^{\mathtt{B}}(\lambda, \U, \V)
(\widehat{\alpha})$.
\end{definition}

In analogy to the classical case, we can prove that the invariants ${\mathcal X}_S^{\mathtt{B}}$ satisfy a set of skein relations.
Indeed we have:

\begin{proposition}\label{chiskein}
The invariants ${\mathcal X}_S^{\mathtt{B}}(\lambda, \U, \V)$ satisfy the following two skein relations:
\[
 \frac{1}{\sqrt{\lambda_S}} {\mathcal X}_S^{\mathtt{B}}(L_{+}) - \sqrt{\lambda_S} {\mathcal X}_S^{\mathtt{B}}(L_{-}) = \frac{\U -
 \U^{-1}}{d} \sum_{s=0}^{d-1}{\mathcal X}_S^{\mathtt{B}}(L_{s}),
\]
where $L_+ = \widehat{\beta g_i}$, $L_{-} = \widehat{\beta g_i^{-1}}$ and $L_{s} = \widehat{\beta t_i^{s} t_{i+1}^{d-s}}$ with
$\beta = \pi(\alpha)$, $\alpha \in \widehat{W}_n$ and $\pi :\widehat{W}_n \rightarrow \Y$.
\[
{\mathcal X}_S^{\mathtt{B}}(M_{+}) - {\mathcal X}_S^{\mathtt{B}}(M_{-}) = \frac{\V - \V^{-1}}{d} \sum_{s=0}^{d-1}{\mathcal
X}_S^{\mathtt{B}}(M_{s}),
\]
where $M_+ = \widehat{\beta b_i}$, $M_{-} = \widehat{\beta b_i^{-1}}$ and $M_{s} = \widehat{\beta t_i^{s}}$ with $\beta \in
\widehat{W}_n$.
\end{proposition}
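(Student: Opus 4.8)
The plan is to reduce each of the two skein relations to the corresponding quadratic relation of $\Y$ and then to track how the exponent sum $\varepsilon$ interacts with the rescaling factor $\sqrt{\lambda_S}$ of Definition~\ref{xinv}. For the braiding relation I would start from the quadratic relation \eqref{quadraticU}: right-multiplying $g_i^2 = 1 + (\U-\U^{-1})e_ig_i$ by $g_i^{-1}$ yields $g_i - g_i^{-1} = (\U-\U^{-1})e_i$. Expanding $e_i = \frac{1}{d}\sum_{s=0}^{d-1} t_i^s t_{i+1}^{-s}$ and using $t_{i+1}^d = 1$ to replace $t_{i+1}^{-s}$ by $t_{i+1}^{d-s}$, this becomes
\[
g_i - g_i^{-1} = \frac{\U-\U^{-1}}{d}\sum_{s=0}^{d-1} t_i^s t_{i+1}^{d-s}.
\]

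Next I would feed this identity into the trace. Write $\beta = \pi(\alpha)$ and abbreviate the prefactor $D = \left(\frac{1-\lambda_S}{\sqrt{\lambda_S}(\U-\U^{-1}){\rm E}_S}\right)^{n-1}$. The three diagrams $L_+$, $L_-$, $L_s$ are all closures of $n$-strand braids, so they share the same factor $D$, while their exponent sums are $\varepsilon(\alpha)+1$, $\varepsilon(\alpha)-1$ and $\varepsilon(\alpha)$ respectively, since the framing generators $t_i$ do not contribute to $\varepsilon$. Consequently the powers $(\sqrt{\lambda_S})^{\pm 1}$ coming from Definition~\ref{xinv} cancel exactly against the external factors $1/\sqrt{\lambda_S}$ and $\sqrt{\lambda_S}$, leaving
\[
\frac{1}{\sqrt{\lambda_S}}\mathcal{X}_S^{\mathtt{B}}(L_+) - \sqrt{\lambda_S}\,\mathcal{X}_S^{\mathtt{B}}(L_-) = D\,(\sqrt{\lambda_S})^{\varepsilon(\alpha)}\,{\rm Tr}\!\left(\beta(g_i - g_i^{-1})\right).
\]
Substituting the displayed identity and recognising each summand $D\,(\sqrt{\lambda_S})^{\varepsilon(\alpha)}\,{\rm Tr}(\beta t_i^s t_{i+1}^{d-s})$ as $\mathcal{X}_S^{\mathtt{B}}(L_s)$ gives the first relation.

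For the looping relation the argument is entirely parallel, using \eqref{quadraticV} in place of \eqref{quadraticU}. Conjugating $b_1^2 = 1 + (\V-\V^{-1})f_1b_1$ by $g_{i-1}\cdots g_1$ and using the equivariance $t_jg_i = g_it_{s_i(j)}$ to conclude $g_{i-1}\cdots g_1 f_1 g_1^{-1}\cdots g_{i-1}^{-1} = f_i$, one obtains $b_i^2 = 1 + (\V-\V^{-1})f_ib_i$, hence $b_i - b_i^{-1} = (\V-\V^{-1})f_i$ with $f_i = \frac{1}{d}\sum_{s=0}^{d-1}t_i^s$. The decisive simplification here is that the looping generator $b_i$ contributes nothing to $\varepsilon$, so $M_+$, $M_-$, $M_s$ all carry the same power $(\sqrt{\lambda_S})^{\varepsilon(\alpha)}$ and no external $\sqrt{\lambda_S}$ factors intervene; the difference $\mathcal{X}_S^{\mathtt{B}}(M_+) - \mathcal{X}_S^{\mathtt{B}}(M_-)$ then reduces directly to $\frac{\V-\V^{-1}}{d}\sum_{s}\mathcal{X}_S^{\mathtt{B}}(M_s)$. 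The only genuinely delicate point is the bookkeeping of $\varepsilon$ and the matching of the $\sqrt{\lambda_S}$ powers across the three diagrams; once the conventions of Definition~\ref{xinv} are fixed these cancellations are forced, and the remaining manipulations are exactly the quadratic-relation computations carried out above.
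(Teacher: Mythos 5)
Your proof is correct and takes essentially the same route as the paper: both arguments reduce the two skein relations to the quadratic relations of $\Y$ (yielding $g_i-g_i^{-1}=(\U-\U^{-1})e_i$ and $b_i-b_i^{-1}=(\V-\V^{-1})f_i$) and then track how the powers of $\sqrt{\lambda_S}$ from Definition~\ref{xinv} cancel against the external factors, using that only braiding generators contribute to $\varepsilon$. If anything, your sign bookkeeping is the correct one, whereas the paper's intermediate lines expand $g_i^{-1}$ as $g_i+(\U-\U^{-1})e_i$ and $b_i^{-1}$ as $b_i+(\V-\V^{-1})f_i$ (plus instead of minus), a slip that formally produces the skein relations with the opposite sign on the right-hand side, even though the stated proposition is the one your computation confirms.
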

\begin{figure}[h]
  \centering
  \includegraphics{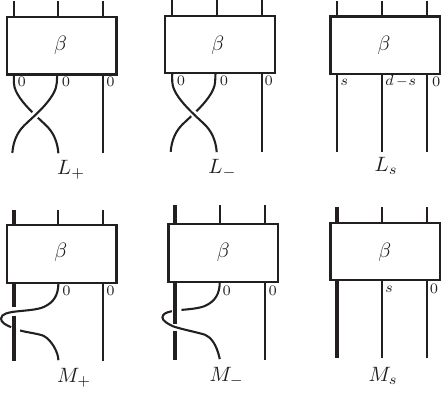}
  \caption{The elements $L_+$, $L{-}$, $L_{s}$, $M_+$, $M_{-}$ and $M_{s}$ in open braid form.}\label{skeinrelfr}
\end{figure}

\begin{proof}
Both skein relations are easily derived from the quadratic relations of $\Y (\U, \V)$. Denote now $\Lambda_S: =\frac{1 -
\lambda_S}{\sqrt{\lambda_S} ( \U - \U^{-1}) {\rm E}_S}$. For the first skein relation we have:
\begin{align*}
{\mathcal X}_S^{\mathtt{B}}(\widehat{\beta g_i^{-1}}) &=  \Lambda_S^{n-1} \left (\sqrt{\lambda_S} \right)^{\varepsilon(\beta-1)}
{\rm Tr}(\beta g_i^{-1})\\
&= \Lambda_S^{n-1} \left (\sqrt{\lambda_S} \right)^{\varepsilon(\beta-1)} {\rm Tr}(\beta g_i) + (\U - \U^{-1}) \Lambda_S^{n-1}
\left (\sqrt{\lambda_S} \right)^{\varepsilon(\beta-1)} {\rm Tr}(\beta e_i)\\
&= \frac{1}{\lambda_S} \Lambda_S^{n-1} \left (\sqrt{\lambda_S} \right)^{\varepsilon(\beta+1)} {\rm Tr}(\beta g_i) + \frac{(\U -
\U^{-1})}{\sqrt{\lambda_S}} \Lambda_S^{n-1} \left (\sqrt{\lambda_S} \right)^{\varepsilon(\beta)} {\rm Tr}(\beta e_i)\\
&= \frac{1}{\lambda_S} {\mathcal X}_S^{\mathtt{B}}(\widehat{\beta g_i}) + \frac{(\U - \U^{-1})}
{d\sqrt{\lambda}}\sum_{s=0}^{d-1}{\mathcal X}_S^{\mathtt{B}} (\beta t_i^s t_{i+1}^{d-s})
\end{align*}
which leads to
\[
 \frac{1}{\sqrt{\lambda_S}} {\mathcal X}_S^{\mathtt{B}}(L_{+}) - \sqrt{\lambda_S} {\mathcal X}_S^{\mathtt{B}}(L_{-}) = \frac{\U -
 \U^{-1}}{d} \sum_{s=0}^{d-1}{\mathcal X}_S^{\mathtt{B}}(L_{s}).
\]
In an analogous way, we prove the second skein relation.
\begin{align*}
{\mathcal X}_S^{\mathtt{B}}(\widehat{\beta b_i^{-1}}) &=  \Lambda_S^{n-1} \left (\sqrt{\lambda_S} \right)^{\varepsilon(\beta)} {\rm
Tr}(\beta b_i^{-1})\\
&= \Lambda_S^{n-1} \left (\sqrt{\lambda_S} \right)^{\varepsilon(\beta)} {\rm Tr}(\beta b_i) + (\V - \V^{-1}) \Lambda_S^{n-1} \left
(\sqrt{\lambda_S} \right)^{\varepsilon(\beta)} {\rm Tr}(\beta f_i)\\
&= \Lambda_S^{n-1} \left (\sqrt{\lambda_S} \right)^{\varepsilon(\beta)} {\rm Tr}(\beta b_i) + (\V - \V^{-1}) \Lambda_S^{n-1} \left
(\sqrt{\lambda_S} \right)^{\varepsilon(\beta)} {\rm Tr}(\beta f_i)\\
&=  {\mathcal X}_S^{\mathtt{B}}(\widehat{\beta b_i}) + \frac{(\V - \V^{-1})} {d}\sum_{s=0}^{d-1}{\mathcal X}_S^{\mathtt{B}} (\beta
t_i^s )
\end{align*}
which is equivalent to:
\[
{\mathcal X}_S^{\mathtt{B}}(M_{+}) - {\mathcal X}_S^{\mathtt{B}}(M_{-}) = \frac{\V - \V^{-1}}{d} \sum_{s=0}^{d-1}{\mathcal
X}_S^{\mathtt{B}}(M_{s}),
\]
\end{proof}
The link invariants on the level of  ${\rm FTL}^{\mathtt{B}}_{d,n}(\U, \V)$ will be specializations
of the invariants ${\mathcal X}_S^{\mathtt{B}}(\lambda, \U, \V)$ for specific values of the trace parameters $x_i$, $y_j$ and $z$. Theorem~\ref{mainthm} provides the conditions so that these new invariants are well-defined. Of course, not all values for $x_i$, $y_j$ and $z$ furnish topologically interesting link invariants and so we shall
use Corollary~\ref{parvals} to filter out such values.

In this context, we discard the cases $1(i)$, $2(i)$ and $2(ii)$ of Corollary~\ref{parvals}. The reason behind this is that if we
specialize the trace parameters in the expression of ${\mathcal X}_S^{\mathtt{B}}$ to any of the cases mentioned just above, we
will obtain an invariant that fails to distinguish basic
pairs of links. In more detail, for $d=1$ we have that $x_k=1$ and so the parameters $z$ and $y_k$ correspond to values  that were
discarded in the
classical case. From the surviving values of Corollary~\ref{parvals}, we deduce that the
rescaling factor $\lambda_S = \U^4$ and so we have:

\begin{definition}\label{phinv}
Let ${\rm X}= ({\rm x}_1 , \ldots , {\rm x}_{d-1})$ a solution of the ${\rm E}$-system, $S \subset C_d$ that
parametrizes said solution. Let also the trace parameters $y_k$ to be as in case 1(ii) of Corollary~\ref{parvals}  and let  $z=
-\frac{1}{\U (\U^2 +1) |S|}$. Then, the following map is an invariant of framed links inside the solid torus:
\[
\rho_S^{\mathtt{B}}(\U, \V) (\widehat{\alpha}):= \left ( - \frac{1 + \U^2}{{\rm E}_S \U} \right )^{n-1} \U^{2\varepsilon(\alpha)}
{\rm Tr}(\bar{\pi} (\alpha)) ={\mathcal X}_S^{\mathtt{B}}(\U^4, \U, \V)  ,
\]
where ${\rm E_S}$, $\varepsilon(\alpha)$ and $\bar{\pi} : \mathcal{F}^{\mathtt{B}}_{d,n} \rightarrow \F (\U ,\V)$ that sends
$\sigma_i \mapsto g_i$ and $t_i \mapsto t_i$.
\end{definition}

Since the invariants $\rho_S^{\mathtt{B}}$ are specializations of ${\mathcal X}_S^{\mathtt{B}}$, they should satisfy also a
specialized version of the skein relations of Proposition~\ref{chiskein}. Indeed, by substituting $\lambda_S=\U^4$ in
Proposition~\ref{chiskein} we obtain:
\begin{proposition}\label{rhoskein}
The invariants $\rho_S^{\mathtt{B}}(\U, \V)$ satisfy the following two skein relations:
\[
\U^{-2} \rho_S^{\mathtt{B}}(L_{+}) - \U^2 \rho_S^{\mathtt{B}}(L_{-}) = \frac{\U - \U^{-1}}{d}
\sum_{s=0}^{d-1}\rho_S^{\mathtt{B}}(L_{s}),
\]
where $L_+ = \widehat{\beta g_i}$, $L_{-} = \widehat{\beta g_i^{-1}}$,$L_{s} = \widehat{\beta t_i^{s} t_{i+1}^{d-s}}$, $\beta =
\pi(\alpha)$, $\alpha \in \widehat{W}_n$ and $\pi :\widehat{W}_n \rightarrow \Y$.
\[
\rho_S^{\mathtt{B}}(M_{+}) - \rho_S^{\mathtt{B}}(M_{-}) = \frac{\V - \V^{-1}}{d} \sum_{s=0}^{d-1}\rho_S^{\mathtt{B}}(M_{s}),
\]
where $M_+ = \widehat{\beta b_i}$, $M_{-} = \widehat{\beta b_i^{-1}}$ and $M_{s} = \widehat{\beta t_i^{s}}$ and $\beta \in
\widehat{W}_n$.
\end{proposition}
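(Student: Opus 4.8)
The plan is to obtain both identities as immediate specializations of the skein relations already established for ${\mathcal X}_S^{\mathtt{B}}$ in Proposition~\ref{chiskein}. By Definition~\ref{phinv} we have $\rho_S^{\mathtt{B}}(\U,\V)(\widehat\alpha) = {\mathcal X}_S^{\mathtt{B}}(\U^4,\U,\V)(\widehat\alpha)$; that is, $\rho_S^{\mathtt{B}}$ is exactly the invariant ${\mathcal X}_S^{\mathtt{B}}$ with the rescaling factor fixed at $\lambda_S = \U^4$. The parameters used in Definition~\ref{phinv} come from case $1(ii)$ of Corollary~\ref{parvals}, where the $x_k$ are declared to be solutions of the ${\rm E}$-system and the $y_k$ solutions of the ${\rm F}$-system, so the hypotheses of Definition~\ref{xinv} are met and Proposition~\ref{chiskein} applies to this choice of parameters. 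It therefore suffices to substitute $\lambda_S = \U^4$ into the two displayed identities of Proposition~\ref{chiskein} and replace each occurrence of ${\mathcal X}_S^{\mathtt{B}}$ by $\rho_S^{\mathtt{B}}$.

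First I would record the arithmetic confirming the rescaling value. With $z = -\frac{1}{\U(\U^2+1)|S|}$ and ${\rm E}_S = \frac{1}{|S|}$ from case $1(ii)$, one computes $\lambda_S = \frac{z-(\U-\U^{-1}){\rm E}_S}{z} = 1 + (\U-\U^{-1})\U(\U^2+1) = 1 + (\U^2-1)(\U^2+1) = \U^4$, matching the value asserted in the paragraph preceding Definition~\ref{phinv}. Hence $\sqrt{\lambda_S} = \U^2$ and $1/\sqrt{\lambda_S} = \U^{-2}$. Substituting these into the first relation $\frac{1}{\sqrt{\lambda_S}}{\mathcal X}_S^{\mathtt{B}}(L_+) - \sqrt{\lambda_S}\,{\mathcal X}_S^{\mathtt{B}}(L_-) = \frac{\U-\U^{-1}}{d}\sum_{s=0}^{d-1}{\mathcal X}_S^{\mathtt{B}}(L_s)$ of Proposition~\ref{chiskein} yields precisely $\U^{-2}\rho_S^{\mathtt{B}}(L_+) - \U^2\rho_S^{\mathtt{B}}(L_-) = \frac{\U-\U^{-1}}{d}\sum_{s=0}^{d-1}\rho_S^{\mathtt{B}}(L_s)$, which is the first claimed relation.

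For the second relation the coefficients of Proposition~\ref{chiskein} carry no dependence on $\lambda_S$, so it transfers verbatim under the identification $\rho_S^{\mathtt{B}} = {\mathcal X}_S^{\mathtt{B}}(\U^4,\U,\V)$, giving $\rho_S^{\mathtt{B}}(M_+) - \rho_S^{\mathtt{B}}(M_-) = \frac{\V-\V^{-1}}{d}\sum_{s=0}^{d-1}\rho_S^{\mathtt{B}}(M_s)$. Since the whole argument is a pure specialization of an already proven proposition, there is no genuine obstacle; the only point requiring care is the short computation verifying $\lambda_S = \U^4$, together with the bookkeeping observation that it is exactly the surviving case $1(ii)$ of Corollary~\ref{parvals}, and not the discarded cases, whose rescaling value must be substituted here.
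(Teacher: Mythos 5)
Your proof is correct and follows exactly the paper's approach: the paper likewise obtains this proposition by substituting $\lambda_S=\U^4$ into the skein relations of Proposition~\ref{chiskein}, with the verification that $\lambda_S=\U^4$ for the surviving parameter values carried out just before Definition~\ref{phinv}. Your explicit check of that arithmetic and the remark that the second relation is $\lambda_S$-independent are exactly the (short) content of the paper's argument.
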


\begin{remark}
Notice that for $d=1$, $\rho_S^{\mathtt{B}}(\U, \V)$ coincides with the case of classical links in \eqref{vtypeb}. Moreover, for
$d=1$ the skein relations of Proposition~\ref{chiskein} coincide with the skein relations \eqref{skein1V} and \eqref{skein2V}.
\end{remark}

\subsection{Classical link invariants in the solid torus} \label{sec:geom}
Restricting $\pi$ to classical braids, seen as framed braids with all
framings equal zero, one obtains from $\rho_S^{\mathtt{B}}(\U, \V)$ an invariant for classical links, which is denoted by
$\eta:=\eta_S^{\mathtt{B}}(\U, \V)$. The invariant $\eta$ satisfies the same skein relations as $\rho_S^{\mathtt{B}}(\U, \V)$.
Notice that the algebra ${\rm FTL}_{d,n}(u)$ can be seen as a subalgerba  of $\F$. Indeed, the image of the map
\[
\phi: {\rm FTL}_{d,n}(u) \longrightarrow \F,
\]
 that sends $g_i \mapsto g_i$ and $t_i \mapsto t_i$, is isomorphic to ${\rm FTL}_{d,n}(u)$. Therefore, the trace ${\rm Tr}$, when
 restricted to $\phi({\rm FTL}_{d,n}(u)$), coincides with the trace $tr$ of ${\rm FTL}_{d,n}(u)$.

 A link $L$ inside the solid torus $T$ is called affine if it lies inside a 3-ball $B \subset T$.  Any link in $S^3$ can be seen as
 an embedded affine link in the solid torus. From the above, we can deduce that the invariant $\eta$ contains the invariant
 $\theta_d$ and so it distinguishes at least the same number of non-isotopic links as  $\theta_d$.

 More precisely, the invariant $\theta_d$ distinguishes six pairs of non-isotopic links that are not distinguished by the Jones
 polynomial \cite{gola}. Moreover, $\theta_d$ generalizes to the two-variable link invariant $\theta(q,E)$ that is topologically
 equivalent to the Jones polynomial on knots but stronger than the Jones polynomial on links \cite[Theorem~5]{gola}.  Consequently,
 it is different than the Homflypt and the Kauffman polynomials. It has been shown as well \cite{gola, chlouveraki} that
 $\theta(q,E)$ distinguishes two links from the Eliahou-Kaufmann-Thistlethwaite infinite family of links \cite{ekt} that have the
 same Jones polynomial as the $k$-component unknot.  By specializing $E=1/d$, one can confirm that $\theta_d$ also distinguishes
 these two links. Figure~\ref{fig:pairs} collects all pairs of affine links that are known to be distinguished by the invariant
 $\eta$.

\begin{figure}[h]
  \centering
  \includegraphics[width=\textwidth]{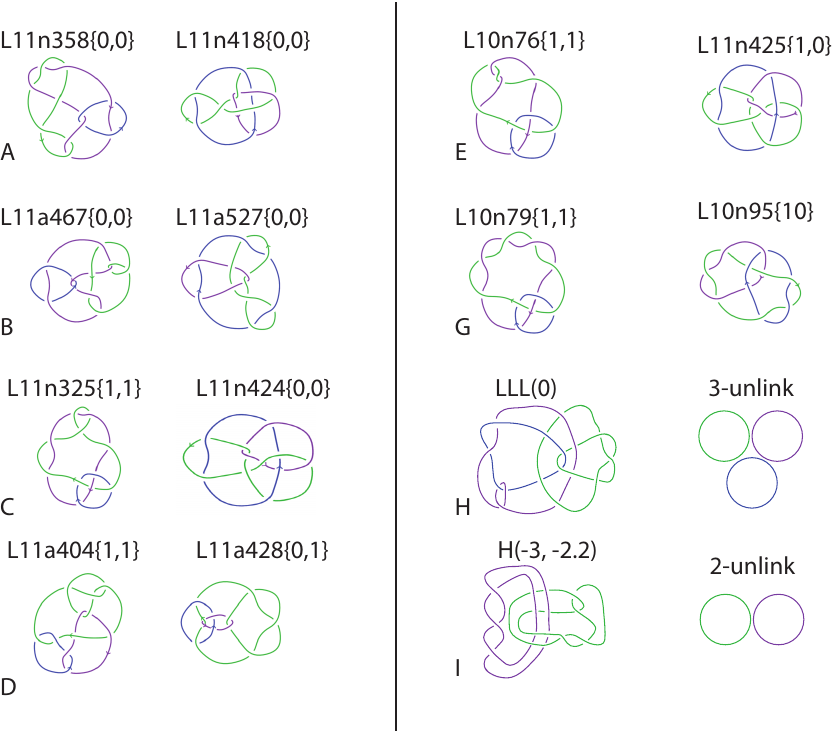}
  \caption{Pairs of affine links  that are distinguished by the invariant $\eta$ and not by the Jones polynomial. Note that pairs H
  and I and unoriented. Pairs A-G follow the Thistelthwaite notation \cite{linkinfo}. Pairs H and I follow the
  Eliahou-Kauffman-Thistelthwaite notation \cite{ekt}.}\label{fig:pairs}
\end{figure}

\subsection{Future work}
The observation that the invariant $\eta$ contains $\theta_d$ suggests that $\eta$ is stronger than the Jones polynomial in the solid
torus, at least on affine links, and that it is different than the Homflypt polynomial in the solid torus. Consider the map $\delta :
\mathbb{K}\widetilde{W}_n \longrightarrow \Y$ that sends $\sigma_i \mapsto g_i$.  In analogy to \cite{chjukala} we have that
$\delta(\mathbb{K} \widetilde{W}_n)$ is isotopic to ${\rm Y}^\mathtt{B}_{(br)}$, the subalgebra of $\Y$ generated only by the
braiding and the looping generators. Note that in $ {\rm Y}^\mathtt{B}_{(br)}$ the generators $t_i$ appear only in the idempotents
$e_i$ and $f_j$ and only after the application of one of the quadratic relations. However, they still have an impact on the skein
relation, as they introduce terms with summations (recall Proposition~\ref{rhoskein}). Unfortunately, this makes difficult to compare
$\eta$ to other invariants in the solid torus on non-affine links.

 In order to overcome this obstacle, we follow the method of \cite{chjukala, gola}.  Let $\mathcal{E}_n^{\mathtt{B}}$ be the algebra
 of braids and ties of type $\mathtt{B}$  \cite{btflores} that is generated by the braiding generators $T_i$ ($i=1 \ldots n-1$) the
 looping generator $B_1$, and the idempotents $E_i$ ($i=1 \ldots n-1$) and $F_j$ ($j=1, \ldots n$).  For $d > n+1$, the map
 $\mathcal{E}_n^{\mathtt{B}} \longrightarrow \Y$ is an embedding \cite{btflores}, which, again in analogy to \cite{chjukala}, implies
 that $\mathcal{E}_n^{\mathtt{B}} \cong {\rm Y}^\mathtt{B}_{(br)}$.

This means that in the context of classical links in the solid torus, seen as closures of framed braids in the solid torus with all
framings equal zero, we can work directly with $\mathcal{E}_n^{\mathtt{B}}$. The advantage is that the framing generators are not
involved in the definition of  $\mathcal{E}_n^{\mathtt{B}}$, which simplifies the corresponding skein relations. For the purpose of
our comparison we aim to generalize the invariant  $\eta$ to a three-variable invariant. One could achieve this by defining the
partition Temperley-Lieb algebra of type $\mathtt{B}$, ${\rm PTL}_n^{\mathtt{B}}$, as an appropriate quotient of
$\mathcal{E}_n^{\mathtt{B}}$ and determine the necessary and sufficient conditions so that the trace ${\rm \mathbf{tr}}_n$ of
$\mathcal{E}_n^{\mathtt{B}}$ passes to ${\rm PTL}_n^{\mathtt{B}}$. Under these conditions, we will obtain the desired generalized
invariant. This is a work in progress and it will be the subject of a sequel paper.

\section*{Acknowledgements}
The authors would like to thank the referee for the careful reading and his/her valuable remarks. This project was partially supported by CONICYT PAI 79140019 and FONDECYT 11170305. The authors would also like to acknowledge the contribution of the COST Action CA17139. 

\bibliography{bibliography}{}
\bibliographystyle{siam}

\end{document}